\newtheorem{thm}{Theorem}
\newtheorem{ass}[thm]{Assumption}
\newtheorem{obs}{Observation}
\def \X {\mathcal{X}}
\def \R {\mathbb{R}}
\def \w {\mathbf{w}}
\def \v {\mathbf{v}}
\def \x {\mathbf{x}}
\def \E {\mathrm{E}}
\def \x {\mathbf{x}}
\def \u {\mathbf{u}}
\def \g {\mathbf{g}}
\def \wh {\widehat{\w}}
\def \Fh {\widehat{F}}
\begin{document}

\title[Accelerated Stochastic Subgradient Methods ]{Accelerate Stochastic Subgradient Method by Leveraging Local Growth Condition}
 \author{\Name{Yi Xu}$^\dagger$\Email{yi-xu@uiowa.edu}\\
  \Name{Qihang Lin}$^\ddagger$\Email{qihang-lin@uiowa.edu}\\
\Name{Tianbao Yang}$^\dagger$\Email{tianbao-yang@uiowa.edu}\\
   \addr$^\dagger$ Department of Computer Science\\
      \addr$^\ddagger$ Department of Management  Sciences\\
    The University of Iowa, Iowa City, IA 52242
}

\maketitle
\vspace*{-0.5in}
\begin{center}{First version: July 4, 2016}\end{center}

\begin{abstract}
In this paper, a new theory is developed for first-order stochastic convex optimization, showing that the global convergence rate is sufficiently  quantified by a local growth rate of the objective function in a neighborhood of the optimal solutions. In particular, if the objective function $F(\w)$ in the $\epsilon$-sublevel set grows as fast as $\|\w - \w_*\|_2^{1/\theta}$, where $\w_*$ represents the closest optimal solution to $\w$ and $\theta\in(0,1]$ quantifies the local growth rate,  the iteration complexity of first-order stochastic optimization for achieving  an $\epsilon$-optimal solution can be $\widetilde O(1/\epsilon^{2(1-\theta)})$, which is {\it optimal at most} up to a logarithmic factor. To achieve the faster global convergence, we develop two different {\bf accelerated stochastic subgradient} methods by iteratively solving the original problem approximately in a local region around a historical solution with the size of the local region gradually decreasing as the solution approaches the optimal set. Besides the theoretical improvements, this work also includes new contributions towards making the proposed algorithms practical: (i) we present practical variants of accelerated stochastic subgradient methods that can run without the knowledge of  multiplicative growth constant and even the growth rate $\theta$; (ii) we consider a broad family of problems in machine learning to demonstrate that the proposed algorithms enjoy faster convergence than traditional stochastic subgradient method. We also characterize the complexity of the proposed algorithms for ensuring the gradient is small without the smoothness assumption. 
\end{abstract}

\section{Introduction}
In this paper, we are interested in solving the following stochastic optimization problem:
\begin{equation}\label{eqn:psg}
\min_{\w\in\mathcal K}F(\w)\triangleq \E_{\xi}[f(\w; \xi)],
\end{equation}
where $\xi$ is a random variable, $f(\w; \xi)$ is a convex function of $\w$, $\E_\xi[\cdot]$ is the expectation over $\xi$ and $\mathcal K$ is a convex domain. We denote by $\partial f(\w; \xi)$ a subgradient of $f(\w; \xi)$. Let $\mathcal K_*$ denote the optimal set of~(\ref{eqn:psg}) and $F_*$ denote the optimal value. 

  
In recent years, it becomes very important to develop efficient and effective optimization algorithms for solving large-scale machine learning problems~\citep{fang2018faster,guo2017thresholded,blanchard2016convergence}.
Traditional stochastic subgradient (SSG) method updates the solution according to  
\begin{equation}\label{eqn:ssg}
\w_{t+1} =\Pi_{\mathcal K}[\w_t - \eta_t \partial f(\w_t; \xi_t)],
\end{equation}
for $t=1,\ldots, T$, where $\xi_t$ is a sampled value of $\xi$ at $t$-th iteration, $\eta_t$ is a step size and $\Pi_\mathcal K[\w]=\arg\min_{\v\in\mathcal K}\|\w - \v\|_2^2$ is a projection operator that projects a point into $\mathcal K$. Previous studies have shown that under the following assumptions  i) $\|\partial f(\w; \xi)\|_2\leq G$, ii) there exists $\w_*\in\mathcal K_*$ such that $\|\w_t - \w_*\|_2\leq B$ for $t=1,\ldots, T$~\footnote{This holds if we assume the domain $\mathcal K$ is bounded such that $\max_{\w, \v\in\mathcal K}\|\w - \v\|_2\leq B$ or if assume $dist(\w_1,\mathcal K_*)\leq B/2$ and project every solution $\w_t$ into $\mathcal K\cap \mathcal B(\w_1,B/2)$.}, and by setting the step size $\eta_t = \frac{B}{G\sqrt{T}}$ in~(\ref{eqn:ssg}),  with a high probability $1-\delta$ we have
\begin{equation}\label{eqn:ssgi}
\textstyle F(\wh_T) - F_*\leq O\left({GB(1+ \sqrt{\log(1/\delta)})}/{\sqrt{T}}\right),
\end{equation}
where $\wh_T=\sum_{t=1}^T\w_t/T$. 
The above convergence implies that in order to obtain an $\epsilon$-optimal solution by SSG, i.e., finding an $\w$ such that $F(\w) - F_*\leq \epsilon$ with a high probability $1-\delta$, one needs at least $\textstyle T = O({G^2B^2(1+\sqrt{\log(1/\delta)})^2}/{\epsilon^2})$ in the worst-case. 

It is commonly known that the slow convergence of SSG  is due to the variance in the stochastic subgradient and  the non-smoothness nature of the problem as well, which therefore requires a decreasing step size or a very small step size. Recently, there emerges a stream of studies on various variance reduction techniques to accelerate  stochastic {\bf gradient} method~\citep{DBLP:conf/nips/RouxSB12,DBLP:conf/nips/0005MJ13,NIPS2013_4937,xiao2014proximal,DBLP:conf/nips/DefazioBL14}. However, they all hinge on the smoothness assumption.  The proposed algorithms in this work  tackle the issue of variance of {{\bf stochastic  subgradient} without the smoothness assumption from another pespective. 

The main motivation for addressing this problem is from a key observation: a high probability analysis of the SSG method shows that the variance term of the stochastic subgradient is accompanied by an upper bound of distance of intermediate solutions to the {\it target} solution. This observation has also been leveraged  in previous analysis to design faster convergence for stochastic  convex optimization  that use a strong or uniform  convexity condition~\citep{hazan-20110-beyond,juditsky2014} or a global growth condition~\citep{DBLP:conf/icml/RamdasS13} to control the  distance of intermediate solutions to the {\it optimal} solution by their functional residuals. However, we find  these global assumptions are completely unnecessary, which may not only restrict their applications to a broad family of problems but also worsen the convergence rate due to the larger multiplicative growth constant that could be domain-size dependent. In contrast, we develop a new theory only relying on the local growth condition  to control the distance of intermediate solutions to the {\it $\epsilon$-optimal} solution by their functional residuals but achieving a fast global convergence. 

Besides the fundamental difference, the present work also possesses  several unique algorithmic contributions compared with previous similar work on stochastic optimization: (i) we have two different ways  to control the  distance of intermediate solutions to the {\it $\epsilon$-optimal} solution, one by explicitly imposing a bounded ball constraint and another one by implicitly regularizing the intermediate solutions, where the later one could be more efficient if the projection into the intersection of a bounded ball and the problem domain is complicated; (ii) we develop more practical variants that can be run without knowing the multiplicative growth constant though under a slightly stringent condition;  (iii) for problems whose local growth rate is unknown we still develop an improved convergence result of the proposed algorithms comparing with the SSG method. In addition, the present work will demonstrate the improved results and practicability of the proposed algorithms for many problems in machine learning, which is lacking in similar previous work.

We summarize the main results below. The proposed algorithms and their analysis are developed under the following generic local growth condition (LGC): 
\begin{equation}\label{eqn:lgc2}
\|\w - \w_*\|_2\leq c (F(\w) - F_*)^{\theta}, \quad \forall \w\in\mathcal S_\epsilon,
\end{equation}
where $\theta\in(0,1]$, $c>0$ and $\mathcal S_\epsilon$ denotes the $\epsilon$-sublevel set with $\epsilon$ being a small value. 

\begin{itemize}
\item In Section~\ref{sec:main}, we present two variants of accelerated stochastic subgradient (ASSG) methods and analyze their iteration complexities for finding an $\epsilon$-optimal solution with high probability. The two variants use different ways  to mitigate the effect of variance of stochastic subgradient with one  using shrinking ball constraints and the second variant using increasing regularization. With complete knowledge of $c$ and $\theta$, we show that both variants can find an $\epsilon$-optimal solution with a complexity of $\widetilde O(1/\epsilon^{2(1-\theta)})$ for $\theta\in(0,1]$, where $\widetilde O$ suppresses a logarithmic factor in terms of $1/\epsilon$. 

\item In Section~\ref{subsec:prac}, we present a practical variant of ASSG with partial or no knowledge about the LGC. In particular, when $c$ is unknown and $\theta\in(0,1)$ is known the practical variant of ASSG enjoys an improved complexity of $\widetilde O(1/\epsilon^{2(1-\theta)})$. When both $c$ and $\theta$ are unknown, we show that the practical variant still enjoys a better complexity than that of traditional SSG. In particular, the dependence on the distance from the initial solution to the optimal set of SSG's complexity is reduced to a much smaller distance multiplied by a logarithmic factor dependent on the quality of the initial solution.  

\item In Section~\ref{sec:prox}, we consider an extension to proximal algorithms that  handle non-smooth but simple regularizers by a proximal mapping.
 In Section~\ref{sec:sta}, we consider the complexity of the proposed ASSG algorithms for ensuing the gradient of the objective function is small. 

\item In Section~\ref{sec:app}, we consider the applications in machine learning and present many examples with the local growth rate $\theta$ explicitly exhibited. 
In Section~\ref{sec:exp}, we present numerical experiments for demonstrating the effectiveness of the proposed algorithms. 
\end{itemize}

\section{Related Work} 
 The most similar work to the present one is \citep{DBLP:conf/icml/RamdasS13}, which studied stochastic convex optimization under a global growth condition, which they called Tsybakov noise condition. One major difference from their result is that we achieve the same order of iteration complexity up to a logarithmic factor under only a local growth condition. As observed later on, the multiplicative growth constant in local growth condition is domain-size independent that is smaller than that in global growth condition, which could be domain-size dependent. Besides,  the stochastic optimization algorithm in \citep{DBLP:conf/icml/RamdasS13} assume the {\it optimization domain $\mathcal K$ is bounded}, which is removed in this work. In addition, they do not address the issue when the multiplicative constant is unknown and lack study of applicability for machine learning problems. \cite{juditsky2014} presented primal-dual subgradient and stochastic subgradient methods for solving problems under the uniform convexity assumption (see the definition under Observation~\ref{obs:1}). As exhibited shortly, the uniform convexity condition covers only a smaller  family of problems than the considered local growth condition. However, when the problem is uniform convex, the iteration complexity obtained in this work resembles that in \citep{juditsky2014}. 
 
Recently, there emerge a wave of studies that attempt to improve the convergence of existing algorithms under no strong convexity assumption by considering certain weaker conditions than strong convexity~\citep{DBLP:journals/corr/nesterov16linearnon,Liu:2015:APS:2789272.2789282,DBLP:journals/corr/abs-1303-4645,DBLP:journals/siamjo/LiuW15,DBLP:journals/corr/GongY14,DBLP:conf/pkdd/KarimiNS16,DBLP:journals/corr/abs/1606.00269,DBLP:conf/icml/QuXO16,DBLP:journals/jmlr/WangL14}. Several recent works~\citep{DBLP:journals/corr/nesterov16linearnon,DBLP:conf/pkdd/KarimiNS16,DBLP:journals/corr/abs/1606.00269} have unified many of these conditions, implying that they are a kind of global growth condition with $\theta=1/2$. Unlike the present work, most of these developments require certain smoothness assumption except~\citep{DBLP:conf/icml/QuXO16}.
 
\cite{Luo:1992a,Luo:1992b,Luo:1993} pioneered the idea of using local error bound condition to show faster convergence of gradient descent, proximal gradient descent, and many other methods for a family of structured composite problems (e.g., the LASSO problem). Many follow-up works~\citep{DBLP:conf/nips/HouZSL13,DBLP:conf/icml/ZhouZS15,DBLP:journals/corr/ZhouS15a} have considered different regularizers (e.g., $\ell_{1,2}$ regularizer, nuclear norm regularizer). However, these works only obtained asymptotically faster (i.e., linear) convergence and they  hinge on the smoothness on some parts of the problem.  \citep{DBLP:journals/corr/arXiv:1512.03107,DBLP:journals/corr/abs-1607-03815} have considered the same local growth condition (aka local error bound condition in their work) for developing faster deterministic algorithms for non-smooth optimization. However, they did not address the problem of stochastic convex optimization, which restricts their applicability to large-scale problems in machine learning. 
 
Finally, we note that the improved iteration complexity  in this paper does not contradict to the lower bound  in~\citep{opac-b1091338,opac-b1104789}. The bad examples constructed to derive the lower bound for general non-smooth optimization do not satisfy the assumptions made in this work (in particular Assumption~\ref{ass:1}(b)). Recently, \cite{DBLP:conf/nips/ChatterjeeDLZ16} characterize the local minimax complexity of stochastic convex optimization by introducing modulus of continuity that measures the size  of the ``flat set'' where the magnitude of the subderivative is a small value. They established a local minimax complexity result when the modulus of continuity has polynomial growth and proposed an adaptive stochastic optimization algorithm for only one-dimensional problems that achieves the local minimax complexity upto a logarithmic factor. It remains unclear which is more generic between LGC and the polynomial growing modulus of continuity.  

\section{Preliminaries} 
Recall the notations $\mathcal K_*$ and $F_*$ that denote the optimal set of~(\ref{eqn:psg}) and the optimal value, respectively. 
For the optimization problem in~(\ref{eqn:psg}), we make the following assumption throughout the paper.
\begin{ass}\label{ass:1} For a stochastic optimization problem~(\ref{eqn:psg}), we assume
\begin{enumerate}
\item there exist $\w_0\in\mathcal K$ and $\epsilon_0\geq 0$ such that $F(\w_0) - F_*\leq \epsilon_0$;
\item There exists a constant $G$ such that $\|\partial f(\w; \xi)\|_2 \leq G$.
\end{enumerate}
\end{ass}
{\bf Remark:} (1) essentially assumes the availability of a lower bound of the optimal objective value, which usually holds for machine learning problems (due to non-negativeness of the objective function).  (2) is a standard assumption also made in many previous stochastic gradient-based methods~\citep{hazan-20110-beyond,RakhlinSS12,DBLP:conf/icml/RamdasS13}. By Jensen's inequality, we also have $\|\partial F(\w)\|_2 \leq G$. It is notable that unlike previous analysis of SSG, we do not assume the domain $\mathcal K$ is  bounded.  Instead, we will assume the problem satisfies a generic local growth condition as presented shortly. 

For any $\w\in\mathcal K$, let $\w^*$ denote the closest optimal solution in $\mathcal K_*$ to $\w$, i.e., $\w^* = \arg\min_{\v\in\mathcal K_*}\|\v - \w\|_2^2$, 
which is unique. We denote by  $\mathcal L_\epsilon$ the  $\epsilon$-level set of $F(\w)$ and  by $\mathcal S_\epsilon$  the $\epsilon$-sublevel set of $F(\w)$, respectively, i.e.,
$\mathcal L_\epsilon = \{\w\in\mathcal K: F(\w) = F_* + \epsilon\}$, 
$\mathcal S_\epsilon = \{\w\in\mathcal K: F(\w) \leq F_* + \epsilon\}$.
Let $\w_{\epsilon}^\dagger$ denote the closest point in the $\epsilon$-sublevel set to $\w$, i.e.,
\begin{equation}\label{eqn:xepsilon}
\w_\epsilon^{\dagger}=\arg\min_{\v\in\mathcal S_\epsilon}\|\v - \w\|_2^2.
\end{equation}
It is easy to show that $\w_\epsilon^{\dagger}\in\mathcal L_\epsilon$ when $\w\notin\mathcal S_\epsilon$ (using the KKT condition).  
Let $\mathcal B(\w, r)=\{\u\in\R^d: \|\u - \w\|_2\leq r\}$ denote an Euclidean ball centered  at $\w$ with a radius $r$. Denote by $dist(\w,\mathcal K_*)= \min_{\v\in\mathcal K_*}\|\w - \v\|_2$ the distance between $\w$ and the set $\mathcal K_*$, by  $\partial^0 F(\w)$ the projection of $0$ onto the nonempty closed convex set $\partial F(\w)$, i.e., $\|\partial^0 F(\w)\|_2=\min_{\v\in\partial F(\w)}\|\v\|_2$. 
   
\subsection{Functional Local Growth Rate}\label{subsec:LGC}
We quantify the  functional local growth rate by measuring how fast the functional value increase when moving a point away from the optimal solution in the $\epsilon$-sublevel set.  In particular, we state the local growth condition in the following assumption. 

\begin{ass}\label{ass:2} The objective function $F(\cdot)$ satisfies a local growth condition on $\mathcal S_\epsilon$ if there exists a constant $c>0$ and $\theta\in(0,1]$ such that:
\begin{equation}\label{eqn:lgc}
\|\w - \w_*\|_2\leq c (F(\w) - F_*)^{\theta}, \quad \forall \w\in\mathcal S_\epsilon,
\end{equation}
where $\w_*$ is the closest solution in the optimal set $\mathcal K_*$ to $\w$. 
\end{ass}
Note that  the local growth rate $\theta$ is at most $1$. This is due to that $F(\w)$ is $G$-Lipschitz continuous and $\lim_{\w\rightarrow\w_*}\|\w - \w_*\|_2^{1-\alpha}=0$ if $\alpha<1$. The inequality in~(\ref{eqn:lgc}) 
is also called as local error bound condition in~\citep{DBLP:journals/corr/arXiv:1512.03107}. In this work, to avoid confusion with earlier work by~\cite{Luo:1992a,Luo:1992b,Luo:1993} who also explored a related but different local error bound condition, we refer to the inequality in~(\ref{eqn:lgc}) or~(\ref{eqn:lgc2}) as local growth condition (LGC). {It is worth noting that LGC is a general condition, comparing with several other error bound conditions. For example, the polyhedral error bound condition~\citep{DBLP:journals/corr/arXiv:1512.03107} implies LGC with $\theta=1$; while the function has a Lipschitz-continuous gradient, then the Polyak-$\L$ojasiewicz condition is equivalent to the LGC with $\theta = 1/2$. In Section~\ref{sec:app}, we will present several applications in risk minimization problems that satisfying LGC. For more details about the relationship between LGC and other conditions, we refer the reader to~\citep{DBLP:conf/pkdd/KarimiNS16, arxiv:1510.08234, zhang2017restricted, DBLP:journals/corr/arXiv:1512.03107}.}
If the function $F(\x)$ is assumed to satisfy~(\ref{eqn:lgc}) for all $\w\in\mathcal K$, it is referred to as global growth condition (GGC). Note that since we do not assume a bounded $\mathcal K$, the GGC might be ill posed. In the following discussions, when compared with GGC we simply assume the domain is bounded.

Below, we present several observations mostly from existing work to clarify  the relationship between the LGC~(\ref{eqn:lgc2}) and previous conditions, and also justify our choice of  LGC that covers a much broader family of functions than previous conditions and induces a smaller multiplicative growth constant $c$ than that induced by GGC. 
\begin{obs}\label{obs:1}
Strong convexity or uniform convexity condition implies  LGC with $\theta=1/2$, but not vice versa. 
\end{obs}
$F(\w)$ is said to satisfy a  uniform convexity condition  on $\mathcal K$ with convexity parameters $p\geq 2$ and $\mu$ if: 
\begin{equation*}
F(\u) \geq F(\v) + \partial F(\v)^{\top}(\u - \v) + \frac{\mu\|\u - \v\|_2^p}{2}, \forall \u, \v\in\mathcal K.
\end{equation*}
If we let $\u = \w$, $\v= \w_*$, then $\partial F(\w_*)^{\top}(\w -\w_*)\geq 0$ for any $\w\in\mathcal K$, and we have~(\ref{eqn:lgc}) with $\theta = 1/p\in(0,1/2]$. Clearly LGC covers a broader family of functions than uniform convexity. 

\begin{obs}
The weak strong convexity~\citep{DBLP:journals/corr/nesterov16linearnon}, essential strong convexity~\citep{Liu:2015:APS:2789272.2789282}, restricted strong convexity~\citep{DBLP:journals/corr/abs-1303-4645}, optimal strong convexity~\citep{DBLP:journals/siamjo/LiuW15}, semi-strong convexity~\citep{DBLP:journals/corr/GongY14} and other error bound conditions considered in several recent work~\citep{DBLP:conf/pkdd/KarimiNS16,DBLP:journals/corr/abs/1606.00269} imply  a GGC on the entire optimization domain $\mathcal K$  with $\theta=1/2$ for a convex function. 
\end{obs}
Some of these conditions are also equivalent to the GGC with $\theta=1/2$. We refer the reader to~\citep{DBLP:journals/corr/nesterov16linearnon}, \citep{DBLP:conf/pkdd/KarimiNS16} and~\citep{DBLP:journals/corr/abs/1606.00269} for more discussions of these conditions. 

The third observation shows that LGC could imply faster convergence than that induced by GGC. 
\begin{obs}
The LGC  could induce a smaller constant $c$ in~(\ref{eqn:lgc2})  that is domain-size independent than that induced by  the GGC on the entire optimization domain $\mathcal K$. 
\end{obs}
To illustrate this, we consider a function $f(x)=x^2$ if $|x|\leq 1$ and $f(x)= |x|$ if $1< |x|\leq s$, where $s$ specifies the size of the domain. In the $\epsilon$-sublevel set ($\epsilon<1$), the LGC~(\ref{eqn:lgc2}) holds with $\theta=1/2$ and $c=1$. In order to make the inequality $|x|\leq cf(x)^{1/2}$ hold for all $x\in[-s,s]$, we can see that $c=\max_{|x|\leq s}\frac{|x|}{f(x)^{1/2}} = \max_{|x|\leq s}\sqrt{|x|} = \sqrt{s}$. As a result, GGC induces a larger $c$ that depends on the domain size. 

The next observation shows that Luo-Tseng's local error bound condition is closely related to the LGC with $\theta=1/2$. To this end, we first give the definition of Luo-Tseng's local error bound condition. Let $F(\w) = h(\w) + P(\w)$, where $h(\w)$ is a proper closed function with an open domain containing $\mathcal K$ and is continuously differentiable with a locally Lipschitz continuous gradient on any compact set within $dom(h)$ and $P(\w)$ is a proper closed convex function. Such a function $F(\w)$ is said to satisfy Luo-Tseng's local error bound if for any $\zeta>0$, there exists $c, \varepsilon>0$ so that
\begin{equation*}
\|\w - \w_*\|_2 \leq c \| \textrm{prox}_P(\w - \nabla h(\w)) - \w\|_2,
\end{equation*}
whenever $\|\textrm{prox}_P(\w - \nabla h(\w)) - \w\|_2\leq \varepsilon$ and $F(\w)- F_*\leq \zeta$, where $\textrm{prox}_P(\w) = \arg\min_{\u\in\mathcal K}\frac{1}{2}\|\u - \w\|_2^2 + P(\w)$. 
\begin{obs}
 If $F(\w) = h(\w) + P(\w)$ is defined above and satisfies the Luo-Tseng's local error bound condition, it then implies that there exists a sufficiently small $\epsilon'>0$ and $C>0$ such that $\|\w - \w_*\|_2 \leq C(F(\w) - F_*)^{1/2}$ for any $\w\in\mathcal B(\w_*, \epsilon')$.  
 \end{obs}
This observation was established in~\cite[Theorem 4.1]{guoyincalculus2016}. Note that the LGC condition with $\epsilon = G\epsilon'$ and $\theta=1/2$ also implies that $\|\w - \w_*\|_2 \leq C(F(\w) - F_*)^{1/2}$ for any $\w\in\mathcal B(\w_*, \epsilon')$. Nonetheless, Luo-Tseng's local error bound imposes  some smoothness assumption on $h(\w)$. 

The last observation is that the LGC is equivalent to a Kurdyka - \L ojasiewicz inequality (KL), which was proved in~\cite[Theorem 5]{arxiv:1510.08234}. 
\begin{obs}
If $F(\w)$ satisfies a KL inequality, i.e.,  $\varphi'(F(\w)  -F_*)\|\partial^0 F(\w)\|_2\geq 1$ for $\w\in\{\x\in\mathcal K, F(\x) - F_*<\epsilon\}$ with $\varphi(s) = cs^{\theta}$, then LGC~(\ref{eqn:lgc2}) holds, and vice versa. 
\end{obs}
The above KL inequality has been established for continuous semi-algebraic and subanalytic functions~\citep{journals/mp/AttouchBS13,Bolte:2006:LIN:1328019.1328299,arxiv:1510.08234}, which cover a broad family of functions therefore justifying the generality of the LGC. 

Finally, we present a key lemma that can leverage the LGC to control the distance of intermediate solutions to an $\epsilon$-optimal solution, which is due to~\citep{DBLP:journals/corr/arXiv:1512.03107}. 
\begin{lemma}\label{lem:key}
For any $\w\in\mathcal K$ and $\epsilon>0$, we have
\begin{equation*}
\|\w - \w^\dagger_\epsilon\|_2\leq \frac{dist(\w^\dagger_\epsilon, \mathcal K_*)}{\epsilon}(F(\w) - F(\w_\epsilon^\dagger)),
\end{equation*}
where $\w^\dagger_\epsilon\in\mathcal S_\epsilon$ is the closest point in the $\epsilon$-sublevel set to $\w$ as defined in~(\ref{eqn:xepsilon}). 
\end{lemma}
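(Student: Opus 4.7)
The plan is to set up the projection defining $\w_\epsilon^\dagger$, read off a first-order optimality (KKT) condition, and then use the convexity of $F$ twice: once against a nearest optimum to lower-bound the norm of a certain subgradient, and once against $\w$ to upper-bound the displacement $\|\w-\w_\epsilon^\dagger\|_2$.

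First I would dispose of the trivial case $\w\in\mathcal S_\epsilon$, where $\w_\epsilon^\dagger=\w$ and both sides of the inequality vanish. So assume $\w\notin\mathcal S_\epsilon$; then, as noted in the preamble, $\w_\epsilon^\dagger\in\mathcal L_\epsilon$, i.e.\ $F(\w_\epsilon^\dagger)=F_*+\epsilon$, and the inequality $F(\w)>F(\w_\epsilon^\dagger)$ holds strictly.

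Next I apply KKT to $\w_\epsilon^\dagger=\arg\min_{\v\in\mathcal S_\epsilon}\|\v-\w\|_2^2$. Since the constraint is active, there exist a multiplier $\mu>0$ and a subgradient $\g\in\partial F(\w_\epsilon^\dagger)$ with $\w-\w_\epsilon^\dagger=\mu\g$, so in particular $\|\w-\w_\epsilon^\dagger\|_2=\mu\|\g\|_2$. Let $\w^\star\in\mathcal K_*$ be the projection of $\w_\epsilon^\dagger$ onto $\mathcal K_*$, so that $\|\w_\epsilon^\dagger-\w^\star\|_2=dist(\w_\epsilon^\dagger,\mathcal K_*)$. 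Convexity of $F$ at $\w_\epsilon^\dagger$ tested against $\w^\star$ gives
\[
F_*=F(\w^\star)\geq F(\w_\epsilon^\dagger)+\langle \g,\w^\star-\w_\epsilon^\dagger\rangle = F_*+\epsilon+\langle \g,\w^\star-\w_\epsilon^\dagger\rangle,
\]
so $\langle \g,\w_\epsilon^\dagger-\w^\star\rangle\geq \epsilon$, and by Cauchy--Schwarz,
\[
\|\g\|_2\;\geq\;\frac{\epsilon}{dist(\w_\epsilon^\dagger,\mathcal K_*)}.
\]

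Finally, convexity of $F$ at $\w_\epsilon^\dagger$ tested against $\w$ yields
\[
F(\w)-F(\w_\epsilon^\dagger)\;\geq\;\langle \g,\w-\w_\epsilon^\dagger\rangle \;=\; \mu\|\g\|_2^2,
\]
so $\mu\|\g\|_2\leq (F(\w)-F(\w_\epsilon^\dagger))/\|\g\|_2$. Combining the two bounds,
\[
\|\w-\w_\epsilon^\dagger\|_2=\mu\|\g\|_2\;\leq\;\frac{F(\w)-F(\w_\epsilon^\dagger)}{\|\g\|_2}\;\leq\;\frac{dist(\w_\epsilon^\dagger,\mathcal K_*)}{\epsilon}\bigl(F(\w)-F(\w_\epsilon^\dagger)\bigr),
\]
which is the desired inequality. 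The only delicate point in this plan is the KKT step: if $\w_\epsilon^\dagger$ lies on the relative boundary of $\mathcal K$, one must allow a normal-cone term, but the same calculation still goes through because adding an element of $N_\mathcal K(\w_\epsilon^\dagger)$ to $\g$ only strengthens the subgradient inequality $F(\w)\geq F(\w_\epsilon^\dagger)+\langle \g,\w-\w_\epsilon^\dagger\rangle$ when $\w\in\mathcal K$, and the lower bound on $\langle \g,\w_\epsilon^\dagger-\w^\star\rangle$ is unaffected since $\w^\star\in\mathcal K$.
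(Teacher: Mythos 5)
Your proof is correct; note that the paper itself does not prove this lemma but imports it from the cited reference, and your KKT-plus-convexity argument is essentially the proof given there. The only wording I would tighten is the claim that $\mu>0$ ``since the constraint is active'' --- activeness alone permits $\mu=0$, but $\mu=0$ would force $\w-\w^\dagger_\epsilon\in N_{\mathcal K}(\w^\dagger_\epsilon)$ and hence $\langle \w-\w^\dagger_\epsilon,\w-\w^\dagger_\epsilon\rangle\leq 0$, i.e.\ $\w=\w^\dagger_\epsilon$, contradicting $\w\notin\mathcal S_\epsilon$, so the conclusion stands.
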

{\bf Remark:}  In view of LGC,  we can see that $\|\w - \w^\dagger_\epsilon\|_2\leq \frac{c}{\epsilon^{1-\theta}}(F(\w) - F(\w^\dagger_\epsilon))$ for any $\w\in\mathcal K$. } Yang and Lin~\citep{DBLP:journals/corr/arXiv:1512.03107} have leveraged this relationship to improve the convergence of the standard subgradient method.  In this work,  we will build on this relationship to  further  develop novel stochastic optimization algorithms with faster convergence in high probability.  

\section{Accelerated Stochastic Subgradient Methods under LGC}   \label{sec:main}
In this section, we will present the proposed accelerated stochastic subgradient (ASSG)   methods and establish their improved iteration complexity with a high probability. The key to our development is to control the distance of intermediate solutions to the {\it $\epsilon$-optimal} solution by their functional residuals that are decreasing as the solutions approach the optimal set. It is this decreasing factor that help mitigate the non-vanishing variance issue in the stochastic subgradient. To formally illustrate this, we consider the following  stochastic subgradient update: 
\begin{equation}\label{eqn:sgd}
\w_{\tau+1} = \Pi_{\mathcal K\cap \mathcal B(\w_1, D)}[\w_\tau - \eta\nabla f(\w_\tau; \xi_\tau)].
\end{equation}
Then we present a lemma regarding the update of ~(\ref{eqn:sgd}).
\begin{lemma}\label{thm:ssg}
Given $\w_1\in\mathcal K$, apply $t$ iterations of~(\ref{eqn:sgd}).  For any fixed $\w\in\mathcal K\cap \mathcal B(\w_1, D)$ and  $\delta\in(0,1)$, with a probability at least $1-\delta$, the following inequality holds
\begin{equation*}
F(\wh_{t})  - F(\w)\leq  \frac{\eta G^2}{2}+ \frac{\|\w_1 - \w\|_2^2}{2\eta t}  + \frac{4GD\sqrt{3\log(\frac{1}{\delta})}}{\sqrt{t}},
\end{equation*}
where $\wh_t = \sum_{\tau=1}^t\w_t/t$. 
\end{lemma}
{\bf Remark: }  The proof of the above lemma follows similarly as that of Lemma 10  in~\citep{hazan-20110-beyond}. We note that the last term is due to the variance of the stochastic subgradients. In fact, due to the non-smoothness nature of the problem the variance of the stochastic subgradients cannot be reduced, we therefore propose to address this issue by reducing $D$ in light of the inequality in Lemma~\ref{lem:key}. 

The updates in (\ref{eqn:sgd}) can be also understood as  approximately solving the original problem in the neighborhood of $\w_1$. In light of this, we will also develop a regularized variant of the proposed method. 

\subsection{Accelerated Stochastic Subgradient Method: the Constrained variant (ASSG-c)}
\begin{algorithm}[t]
\caption{ASSG-c($\w_0, K, t, D_1, \epsilon_0$)} \label{alg:rssg}
\begin{algorithmic}[1]
\STATE \textbf{Input}: $\w_0\in\mathcal K$, $K$, $t$,  $\epsilon_0$ and $D_1\geq \frac{c\epsilon_0}{\epsilon^{1-\theta}}$
\STATE Set $\eta_1=\epsilon_0/(3G^2)$
\FOR{$k=1,\ldots, K$}
\STATE Let $\w^k_1 = \w_{k-1}$
\FOR{$\tau=1,\ldots, t-1$}
    \STATE  $\w^k_{\tau+1} = \Pi_{\mathcal K\cap \mathcal B(\w_{k-1},D_k)}[\w^k_{\tau} - \eta_k \partial f(\w^k_{\tau}; \xi^k_\tau)]$
   \ENDFOR
\STATE Let $\w_k = \frac{1}{t}\sum_{\tau=1}^t\w^k_\tau$
\STATE Let $\eta_{k+1} = \eta_k/2$ and $D_{k+1} = D_k/2$. 
\ENDFOR
\STATE \textbf{Output}:  $\w_K$
\end{algorithmic}
\end{algorithm}
In this subsection, we present the constrained variant of ASSG that iteratively solves the original problem approximately in an explicitly constructed  local neighborhood of the recent historical solution. The detailed steps are presented in Algorithm~\ref{alg:rssg}. We refer to this variant as ASSG-c.  The algorithm runs in stages  and each stage runs $t$ iterations of updates similar to~(\ref{eqn:sgd}). Thanks to Lemma~\ref{lem:key}, we gradually decrease the radius $D_k$ in a stage-wise manner. 
The step size keeps the same during each stage and geometrically decreases between stages.  We notice that ASSG-c is similar to the Epoch-GD method by~\cite{hazan-20110-beyond} and the (multi-stage) AC-SA method with domain shrinkage by~\cite{DBLP:journals/siamjo/Lan13b} for stochastic strongly convex optimization, and is also similar to the restarted subgradient method (RSG) proposed by~\cite{DBLP:journals/corr/arXiv:1512.03107}.  
However, the difference between ASSG and Epoch-GD/AC-SA  lies at the initial radius $D_1$ and the number of iterations per-stage, which is due to difference between the strong convexity assumption and  Lemma~\ref{lem:key}. 
Compared to RSG, the solutions updated along gradient direction in ASSG are projected back into a local neighborhood around $\w_{k-1}$, which is the key to establish the faster convergence of ASSG.  
The convergence of  ASSG-c is presented in the theorem below.
\begin{theorem}\label{thm:RSSG}
Suppose Assumptions \ref{ass:1} and \ref{ass:2}  hold for a target $\epsilon\ll 1$. Given $\delta\in(0,1)$, let $\tilde\delta = \delta/K$,  $K = \lceil \log_2(\frac{\epsilon_0}{\epsilon})\rceil$, $D_1\geq \frac{c\epsilon_0}{\epsilon^{1-\theta}}$ and $t$ be the smallest integer such that $t \geq \max\{9, 1728\log(1/\tilde\delta)\} \frac{G^2D_1^2}{\epsilon_0^2}$. Then ASSG-c guarantees that, with a probability $1-\delta$, 
$F(\w_K) - F_* \leq 2 \epsilon$. 
As a result, the iteration complexity of ASSG-c for achieving an $2\epsilon$-optimal solution with a high probability $1-\delta$ is $ O (c^2G^2\lceil \log_2(\frac{\epsilon_0}{\epsilon})\rceil\log(1/\delta)/\epsilon^{2(1-\theta)})$ provided $D_1=O(\frac{c\epsilon_0}{\epsilon^{(1-\theta)}})$.
\end{theorem}
{\bf Remark:} It is notable that the faster local growth rate $\theta$ implies the faster global convergence, i.e., lower iteration complexity. In light of the lower bound presented in~\citep{DBLP:conf/icml/RamdasS13} under a GGC, our iteration complexity under the LGC is optimal up to at most a logarithmic factor. It is worth mentioning that unlike traditional high-probability analysis of SSG that usually requires the domain to be bounded, the convergence analysis of ASSG does not rely on such a condition. Furthermore, the iteration complexity of ASSG has a better dependence on the quality of the initial solution or the size of domain if it is bounded. In particular,  if we let $\epsilon_0 = GB$ assuming $dist(\w_0, \mathcal K_*)\leq B$, though this is not necessary in practice, then the iteration complexity of ASSG has only a logarithmic dependence on the distance of the initial solution to the optimal set, while that of SSG has a quadratic dependence on this distance. The above theorem requires a target precision $\epsilon$ in order to set $D_1$. In Section~\ref{subsec:prac}, we alleviate this requirement to make the algorithm more practical.   
Next, we prove Theorem~\ref{thm:RSSG} regarding the convergence of ASSG-c.
\begin{proof}
Let $\w_{k,\epsilon}^\dag$ denote the closest point to $\w_k$ in $\mathcal S_\epsilon$. Define $\epsilon_k = \frac{\epsilon_0}{2^k}$. Note that $D_k = \frac{D_1}{2^{k-1}} \geq\frac{c\epsilon_{k-1}}{\epsilon^{1-\theta}}$ and $\eta_k = \frac{\epsilon_{k-1}}{3G^2}$. We will show by induction that $F(\w_k) - F_*\leq \epsilon_k +\epsilon$ for $k=0,1,\dots$ with a high probability, which leads to our conclusion when $k=K$. The inequality holds obviously for $k=0$. Conditioned on $F(\w_{k-1}) - F_*\leq \epsilon_{k-1} + \epsilon$, we will show that $F(\w_k) - F_*\leq \epsilon_k +\epsilon$ with a high probability. By Lemma~\ref{lem:key}, we have
\begin{equation} \label{eqn:assgc:leb}
\|\w^\dagger_{k-1,\epsilon} - \w_{k-1} \|_2 \leq \frac{c}{\epsilon^{1-\theta}} (F( \w_{k-1}) - F(\w_{k-1,\epsilon}^\dagger)) \leq  \frac{c \epsilon_{k-1}}{\epsilon^{1-\theta}}\leq D_k.
\end{equation}
We apply Lemma~\ref{thm:ssg} to the $k$-th stage of Algorithm~\ref{alg:rssg} conditioned on randomness in previous stages. With a probability $1-\tilde\delta$  we have
\begin{equation}\label{eqn:assgc}
F(\w_k)  - F(\w_{k-1,\epsilon}^\dagger)\leq  \frac{\eta_k G^2}{2}+ \frac{\|\w_{k-1} - \w_{k-1,\epsilon}^\dagger\|_2^2}{2\eta_k t}  + \frac{4GD_k\sqrt{3\log(1/\tilde\delta)}}{\sqrt{t}}.
\end{equation}
Combining (\ref{eqn:assgc:leb}) and (\ref{eqn:assgc}), we get
\begin{equation*}
F(\w_k)  - F(\w_{k-1,\epsilon}^\dagger) \leq  \frac{\eta_k G^2}{2}+ \frac{D_k^2}{2\eta_k t}   + \frac{4GD_k\sqrt{3\log(1/\tilde\delta)}}{\sqrt{t}}.
\end{equation*}
Since $\eta_k = \frac{2\epsilon_k}{3G^2}$ and $t \geq \max\{9, 1728\log(1/\tilde\delta)\} \frac{G^2D_1^2}{\epsilon_0^2}$, we have each term in the R.H.S of above inequality bounded by $\epsilon_k/3$. 
As a result,
\begin{equation*}
F(\w_k) - F(\w_{k-1, \epsilon}^\dagger) \leq  \epsilon_k,
\end{equation*}
which together with the fact that $ F(\w^\dagger_{k-1,\epsilon})-F_*\leq\epsilon$ by definition of $\w^\dagger_{k-1,\epsilon}$ implies
\begin{equation*}
F(\w_k) - F_* \leq \epsilon  + \epsilon_k.
\end{equation*}
Therefore by induction,  with a probability at least $(1-\tilde\delta)^K$ we have 
\begin{equation*}
F(\w_K) - F_*\leq \epsilon_K + \epsilon \leq 2\epsilon. 
\end{equation*}
Since $\tilde \delta = \delta / K$, then $(1-\tilde \delta)^K \geq 1 - \delta$ and we complete the proof. 
\end{proof}
Theorem~\ref{thm:RSSG} shows the high probability convergence bound for ASSG-c. We also prove the following expectational convergence bound, which is an immediate consequence of 
Theorem~\ref{thm:RSSG}. Its proof is provided in~\ref{app:cor:assgc}.
\begin{corollary}\label{cor:ASSGc}
 Suppose Assumptions \ref{ass:1} and \ref{ass:2}  hold for a target $\epsilon\ll 1$.  
 Given $\delta\in(0,1)$, let $\delta \leq \frac{\epsilon}{2GD_1+\epsilon_0}$,  $K = \lceil \log_2(\frac{\epsilon_0}{\epsilon})\rceil$, $D_1\geq \frac{c\epsilon_0}{\epsilon^{1-\theta}}$ and $t$ be the smallest integer such that $t \geq \max\{9, 1728\log(K/\delta)\} \frac{G^2D_1^2}{\epsilon_0^2}$. Then ASSG-c achieves that $\mathbb E \left[F(\w_K) - F_*\right] \leq 2 \epsilon$ using 
 at most $ O \left(\lceil \log_2(\frac{\epsilon_0}{\epsilon})\rceil \log\left(\frac{2GD_1+\epsilon_0}{\epsilon}\right)c^2G^2/\epsilon^{2(1-\theta)}\right)$ iterations provided $D_1=O(\frac{c\epsilon_0}{\epsilon^{(1-\theta)}})$.
\end{corollary}

\subsection{Accelerated Stochastic Subgradient Method: the Regularized variant (ASSG-r)}
One potential issue of ASSG-c is that the projection into the intersection of the problem domain and an Euclidean ball  might increase the computational cost per-iteration  depending on the problem domain $\mathcal K$. To address this issue, we present a regularized variant of ASSG. Before delving into the details of ASSG-r (Algorithm~\ref{alg:2}), we first present a common strategy that solves the non-strongly convex problem~(\ref{eqn:psg}) by stochastic strongly convex optimization.  The basic idea is from the classical deterministic \emph{proximal point algorithm} \citep{rockafellar76} which adds a strongly convex regularizer  to the original problem and solve the resulting proximal problem. In particular, we construct a new problem
\begin{equation*}
\min_{\w \in \mathcal K}\Fh(\w) = F(\w) +  \frac{1}{2\beta}\|\w - \w_1\|_2^2,
\end{equation*}
where $\w_1\in\mathcal K$ is called the regularization reference point. Let $\wh_*$ denote the optimal solution to the above problem given $\w_1$.   It is easy to know $\Fh(\w)$ is a $\frac{1}{\beta}$-strongly convex function on $\mathcal K$. There are many stochastic methods can be used to solve the above strongly convex optimization problem with an $\widetilde O(\beta/T)$ convergence, including stochastic subgradient, proximal stochastic subgradient~\citep{duchi2010composite}, Epoch-GD~\citep{hazan-20110-beyond}, stochastic dual averaging~\citep{xiao2010dual}, etc. 
We employ the stochastic subgradient method suited for strongly convex problems to solve the above problem. The update is given by 
\begin{equation}\label{eqn:sg}
\w_{t+1} = \Pi_{\mathcal K}[\w'_{t+1}]=\arg\min_{\w\in\mathcal K}\left\|\w -\w'_{t+1}\right\|_2^2,
\end{equation}
where $\w'_{t+1}= \w_t - \eta_t (\partial f(\w_t; \xi_t) + \frac{1}{\beta} (\w_t - \w_1))$, 
and  $\eta_t = \frac{2\beta}{t}$~\footnote{The factor $2$ in the step size is used for proving the high probability convergence.}.  
We present a lemma below to bound $\|\wh_*-\w_t\|_2$  and $\|\w_t - \w_1\|_2$ by the above update, which will be used in the proof of convergence of ASSG-r for solving~(\ref{eqn:psg}). 
\begin{lemma}\label{lem:b}
For any $t\geq 1$, we have $\|\wh_* - \w_t\|_2\leq 3\beta G$ and $\|\w_t - \w_1\|_2\leq 2\beta G$.
\end{lemma}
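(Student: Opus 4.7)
The plan is to establish both inequalities sequentially: first control $\|\wh_\ast - \w_1\|_2$ using the optimality of $\wh_\ast$, then control $\|\w_t - \w_1\|_2$ by induction using non-expansiveness of the projection (here exploiting that $\w_1 \in \mathcal K$), and finally combine the two via the triangle inequality to obtain $\|\wh_\ast - \w_t\|_2 \leq 3\beta G$.

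For the first ingredient I would apply the $\tfrac{1}{\beta}$-strong convexity of $\Fh$ at $\w_1$: the inequality $\Fh(\w_1) \geq \Fh(\wh_\ast) + \tfrac{1}{2\beta}\|\w_1 - \wh_\ast\|_2^2$ simplifies (using $\Fh(\w_1)=F(\w_1)$ and $\Fh(\wh_\ast) = F(\wh_\ast) + \tfrac{1}{2\beta}\|\wh_\ast - \w_1\|_2^2$) to $F(\w_1) - F(\wh_\ast) \geq \tfrac{1}{\beta}\|\wh_\ast - \w_1\|_2^2$. Combining this with the $G$-Lipschitz bound $F(\w_1) - F(\wh_\ast) \leq G\|\w_1 - \wh_\ast\|_2$ (which follows from Assumption~\ref{ass:1}(c) together with Jensen's inequality), one immediately concludes $\|\wh_\ast - \w_1\|_2 \leq \beta G$.

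For the second ingredient I would induct on $t$ with $\eta_t = 2\beta/t$. The base case $t = 1$ is trivial since $\w_1 - \w_1 = 0$. For the inductive step, since $\w_1 \in \mathcal K$ and $\Pi_{\mathcal K}$ is non-expansive,
\begin{align*}
\|\w_{t+1} - \w_1\|_2
&\leq \left\|(1 - \eta_t/\beta)(\w_t - \w_1) - \eta_t\, \partial f(\w_t;\xi_t)\right\|_2 \\
&\leq |1 - 2/t|\,\|\w_t - \w_1\|_2 + (2\beta/t)\,G.
\end{align*}
For $t \geq 2$ the coefficient $|1 - 2/t| = 1 - 2/t \in [0,1)$ is non-expansive, and the induction hypothesis yields $\|\w_{t+1} - \w_1\|_2 \leq (1-2/t)(2\beta G) + (2\beta/t)G = 2\beta G - 2\beta G/t \leq 2\beta G$, closing the induction. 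The triangle inequality then gives $\|\wh_\ast - \w_t\|_2 \leq \|\wh_\ast - \w_1\|_2 + \|\w_t - \w_1\|_2 \leq 3\beta G$.

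The only point requiring a moment of care is the coefficient $|1 - \eta_t/\beta|$ in the induction: at $t = 1$ one has $\eta_1/\beta = 2$, which is not contractive, but this is harmless because the initial gap $\|\w_1 - \w_1\|_2$ is exactly zero; from $t \geq 2$ onward the coefficient lies in $[0,1)$ and the recursion closes without difficulty. The slightly subtle part is really the Lipschitz-versus-strong-convexity interplay in Step~1, which delivers the essential bound $\|\wh_\ast - \w_1\|_2 \leq \beta G$ driving everything else.
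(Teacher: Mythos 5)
Your proof is correct and follows essentially the same route as the paper's: bound $\|\wh_*-\w_1\|_2\le\beta G$ from the optimality of $\wh_*$, bound $\|\w_t-\w_1\|_2\le 2\beta G$ by induction on the update using non-expansiveness of the projection (with the $t=1$ case handled by $\w_1-\w_1=0$), and assemble via the triangle inequality. The only cosmetic difference is in the first step, where you use quadratic growth at the minimizer plus Lipschitzness of $F$, while the paper uses the variational inequality $\partial F(\wh_*)^\top(\w_1-\wh_*)\ge\|\wh_*-\w_1\|_2^2/\beta$ together with $\|\partial F(\wh_*)\|_2\le G$; both yield the same bound $\beta G$.
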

{\bf Remark:} The lemma implies that the regularization term implicitly imposes a constraint on the intermediate solutions to center around the regularization reference point, which achieves a similar effect as the ball constraint in Algorithm~\ref{alg:rssg}.  We include its proof in~\ref{app:lemb}. 
\begin{algorithm}[t]
\caption{the ASSG-r algorithm for solving~(\ref{eqn:psg})} \label{alg:2}
\begin{algorithmic}[1]
\STATE \textbf{Input}: $\w_0\in\mathcal K$, $K$, $t$, $\epsilon_0$  and $\beta_1 \geq \frac{2c^2\epsilon_0}{\epsilon^{2(1-\theta)}}$
\FOR{$k=1,\ldots, K$}	
	\STATE Let $\w_1^k=\w_{k-1}$
	\FOR{$\tau=1,\ldots, t-1$}
    \STATE Let  $\w_{\tau+1}'= \left(1 - \frac{2}{\tau}\right)\w^k_\tau + \frac{2}{\tau}\w^k_1 - \frac{2\beta_k}{\tau} \partial f(\w^k_\tau; \xi^k_\tau) $
    \STATE Let $\w^k_{\tau+1} = \Pi_\mathcal K(\w_{\tau+1}')$
\ENDFOR   
        \STATE Let $\w_k= \frac{1}{t}\sum_{\tau=1}^t\w^k_\tau$, and  $\beta_{k+1} = \beta_k/2$ 
   \ENDFOR
\STATE \textbf{Output}:  $\w_K$
\end{algorithmic}
\end{algorithm}

Next, we present a high probability convergence bound, whose proof can be found in~\ref{app:SG}.
\begin{lemma}\label{lemma:ssgs}
Given $\w_1\in\mathcal K$, apply $T$-iterations of~(\ref{eqn:sg}).  For any fixed $\w\in\mathcal K$, $\delta\in(0,1)$, and $T\geq 3$, with a probability at least $1-\delta$,   following inequality holds
\begin{equation*}
F(\wh_{T})  - F(\w)\leq  \frac{\|\w-\w_1\|_2^2}{2\beta} + \frac{34\beta G^2 \left( 1+\log T + \log(4\log T / \delta)\right)}{T},
\end{equation*}
where $\wh_t = \sum_{\tau=1}^t\w_t/t$. 
\end{lemma}
{\bf Remark:} From the above result, we can see that one can set $\beta$ to be a large value to ensure convergence. In particular, by assuming that $dist(\w_1, \mathcal K_*)\leq B$, we can set $\beta= \frac{B^2}{\epsilon}$ and $T\geq \frac{68G^2B^2(1+\log(4\log T/\delta)+\log T)}{\epsilon^2}$ so as to obtain $F(\wh_T) - F_*\leq \epsilon$ with a high probability $1-\delta$, which yields the same order of  iteration complexity to SSG for directly solving~(\ref{eqn:psg}). 

Recall that the main iteration of the proximal point algorithm \citep{rockafellar76} is
\begin{equation} \label{eqn:PPA}
\w_k\approx\arg\min_{\w\in\mathcal K} F(\w) +  \frac{1}{2\beta_k}\|\w - \w_{k-1}\|_2^2,
\end{equation}
where $\w_k$ approximately solves the minimization problem above with $\beta_k$ changing with $k$. With the same idea, our regularized variant of ASSG generates $\w_k$ from stage $k$ by solving the minimization problem (\ref{eqn:PPA}) approximately using~(\ref{eqn:sg}). The detailed steps are presented in Algorithm~\ref{alg:2}, which starts from a relatively large value of the parameter $\beta=\beta_1$ and gradually decreases $\beta$ by a constant factor  after running  a number of $t$ iterations~(\ref{eqn:sg}) using the solution from the previous stage as the new regularization reference point. Despite of its similarity to the proximal point algorithm, ASSG-r incorporates the LGC into the choices of $\beta_k$ and the number of iterations per-stage and obtains new iteration complexity described below. 
\begin{theorem}\label{thm:RPSG}
Suppose Assumptions \ref{ass:1} and \ref{ass:2}  hold for a target $\epsilon\ll 1$. Given $\delta\in(0,1/e)$, let $\tilde\delta = \delta/K$, $K = \lceil \log_2(\frac{\epsilon_0}{\epsilon})\rceil$,  $\beta_1 \geq \frac{2c^2\epsilon_0}{\epsilon^{2(1-\theta)}}$ and $t$ be the smallest integer such that $t \geq  \max\{3,\frac{136\beta_1G^2(1+\log (4\log t/\tilde\delta)+\log t)}{\epsilon_0}\}$.  Then ASSG-r guarantees that, with a probability $1-\delta$, 
$F(\w_K) - F_* \leq 2 \epsilon$. 
As a result, the iteration complexity of ASSG-r for achieving an $2\epsilon$-optimal solution with a high probability $1-\delta$ is $ O (c^2G^2\lceil \log_2(\frac{\epsilon_0}{\epsilon})\rceil\log(1/\delta)/\epsilon^{2(1-\theta)})$ provided $\beta_1=O(\frac{2c^2\epsilon_0}{\epsilon^{2(1-\theta)}})$.
\end{theorem}

{With Lemma~\ref{lemma:ssgs}, the proof of Theorem~\ref{thm:RPSG} is similar to the proof of Theorem~\ref{thm:RSSG}. For completeness, we include it in~\ref{proof:thm:RPSG}.}

\subsection{A Simple Variant of ASSG under GGC}
\begin{algorithm}[t]
\caption{ASSG-s($\w_0, K, t, \epsilon_0$)} \label{alg:assg:wo}
\begin{algorithmic}[1]
\STATE \textbf{Input}: $\w_0\in\mathcal K$, $K$, $t$,  $\epsilon_0$
\STATE Set $\eta_1=\epsilon_0/(3G^2)$
\FOR{$k=1,\ldots, K$}
\STATE Let $\w^k_1 = \w_{k-1}$
\FOR{$\tau=1,\ldots, t-1$}
    \STATE  $\w^k_{\tau+1} = \Pi_{\mathcal K}[\w^k_{\tau} - \eta_k \partial f(\w^k_{\tau}; \xi^k_\tau)]$
   \ENDFOR
\STATE Let $\w_k = \frac{1}{t}\sum_{\tau=1}^t\w^k_\tau$ and $\eta_{k+1} = \eta_k/2$. 
\ENDFOR
\STATE \textbf{Output}:  $\w_K$
\end{algorithmic}
\end{algorithm}
As a byproduct of similar analysis, we can show that a simpler variant of ASSG without using shrinking domain constraint or increasing regularization can have an improved complexity  in expectation under GGC for $\theta\in (0, 1/2]$. 
When the problems satisfy GGC with $\theta \in (1/2, 1]$ and $F(\w) - F_*$ is bounded over $\mathcal K$, one can always show that the problem satisfies a GGC with $\theta=1/2$~\citep{xu2017AdaSVRG}. The details of updates are presented in Algorithm~\ref{alg:assg:wo}, which is referred to ASSG-s. The algorithm is almost the same to Algorithm~\ref{alg:rssg} except that the projection is simply done onto the original domain $\mathcal K$ without intersecting with a bounded ball at each epoch. 
At each epoch, the update is exactly the same to the stochastic subgradient update
\begin{equation}\label{eqn:sgd:wo}
\w_{\tau+1} = \Pi_{\mathcal K}[\w_{\tau} - \eta \partial f(\w_{\tau}; \xi_\tau)].
\end{equation}
To establish the convergence result, we first need the following lemma, whose proof is included in~\ref{app:lemm:wo}.
\begin{lemma}\label{lemm:wo}
Given $\w_1\in\mathcal K$, apply $t$ iterations of~(\ref{eqn:sgd:wo}).  For any fixed $\w\in\mathcal K$, the following inequality holds
\begin{equation*}
\E [F(\wh_{t})  - F(\w)]\leq  \frac{\eta G^2}{2}+ \frac{\|\w_1 - \w\|_2^2}{2\eta t}, 
\end{equation*}
where $\wh_t = \sum_{\tau=1}^t\w_t/t$. 
\end{lemma}

We then give the convergence result of ASSG-s in the following theorem. 
\begin {theorem}\label{thm:ASSG:wo}
Suppose Assumption \ref{ass:1} holds and $F(\w)$ obeys {GGC~(\ref{eqn:lgc2})  with $\theta\in(0, 1/2]$}. Given $\epsilon > 0$, $K = \lceil \log_2(\frac{\epsilon_0}{\epsilon})\rceil$ and $t$ be the smallest integer such that $t \geq \frac{18c^2G^2}{\epsilon^{2(1-\theta)}}$. Then ASSG-s guarantees that $\E[F(\w_K) - F_*] \leq  \epsilon$. As a result, the iteration complexity of ASSG-s for achieving an $\epsilon$-optimal solution is $ O (c^2G^2\lceil \log_2(\frac{\epsilon_0}{\epsilon})\rceil/\epsilon^{2(1-\theta)})$ in expectation.
\end {theorem}
\begin{proof}
Let define $\epsilon_k = \frac{\epsilon_0}{2^k}$. Note that $\eta_k = \frac{\epsilon_{k-1}}{3G^2}$. We will show by induction that $\E[F(\w_k) - F_*]\leq \epsilon_k$ for $k=0,1,\dots$, which leads to our conclusion when $k=K$. The inequality holds obviously for $k=0$. Conditioned on $\E[F(\w_{k-1}) - F_*]\leq \epsilon_{k-1}$, we will show that $\E[F(\w_k) - F_*]\leq \epsilon_k$. 
By GGC, we have for any $\w_{k-1}\in\mathcal K$,
\begin{equation*} 
\|\w_{k-1} - \w^*_{k-1} \|_2 \leq c (F(\w_{k-1} ) - F_*)^{\theta}.
\end{equation*}
Then by the condition $\E[F(\w_{k-1}) - F_*]\leq \epsilon_{k-1}$, we have
\begin{equation} \label{eqn:assgc:leb:wo}
\E[\|\w_{k-1} - \w^*_{k-1} \|_2] \leq c \epsilon_{k-1}^\theta.
\end{equation}

We apply Lemma~\ref{lemm:wo} to the $k$-th stage of Algorithm~\ref{alg:assg:wo} conditioned on randomness in previous stages. For any $\w_*\in\mathcal K_*$ we have
\begin{equation}\label{eqn:assgc:wo}
\E[F(\w_k)  - F(\w_*)]  \leq  \frac{\eta_k G^2}{2}+ \frac{\E[\|\w_{k-1} - \w_*\|_2^2]}{2\eta_k t}.
\end{equation}
By using {GGC~(\ref{eqn:lgc2}) with $\theta\in(0, 1/2]$} we have
\begin{align*}
\E[F(\w_k)  - F(\w_*)] \leq &   \frac{\eta_k G^2}{2}+ \frac{c^2 \E[F(\w_{k-1}) - F(\w_*)]^{2\theta}}{2\eta_k t}\\
\leq &   \frac{\eta_k G^2}{2}+ \frac{c^2 \{\E[F(\w_{k-1}) - F(\w_*)]\}^{2\theta}}{2\eta_k t}\\
\leq &   \frac{\eta_k G^2}{2}+ \frac{c^2  \epsilon^{2\theta}}{2\eta_k t} \leq \frac{\epsilon_k}{3} + \frac{\epsilon_k}{3} \leq \epsilon_k, 
\end{align*}
where the second inequality uses the concavity of $\E[X^\alpha] \leq \{\E[X]\}^\alpha$ whith $0<\alpha\leq 1$; the fourth inequality using the fact that $\eta_k = \frac{\epsilon_k}{3G^2}$ and $t \geq \frac{18c^2G^2}{\epsilon^{2(1-\theta)}}$ with $\epsilon \leq \epsilon_k$. 
Therefore by induction, we have 
\begin{equation*}
\E[F(\w_K) - F_*]\leq \epsilon_K  \leq \epsilon. 
\end{equation*}
\end{proof}

\section{Practical Variants of ASSG}\label{subsec:prac}
Readers may have noticed that the presented algorithms require appropriately setting up the initial values of $D_1$ or $\beta_1$  or $t$ that depend on potentially unknown $c$ and  unknown $\theta$. 
As we show later, the value of $\theta$ is exhibited for many problems. However, the parameter $c$ is usually difficult to estimate, which leads to a challenge to set the value of $t$. Overestimate of $t$ leads to waste of iterations while underestimate of $t$ leads to a less accurate solution so that it may not reach the target level of accuracy.
This section is devoted to more practical variants of ASSG {that can be implemented without knowing parameter $c$ or $\theta$.} 
For ease of presentation, we focus on the constrained variant of ASSG. Similar extensions can be made for the regularized variant ASSG-r and the simple variant ASSG-s, which are omitted here.  In the following subsections, we divide the problem into two cases: (1) unknown $c$;  (2) unknown $\theta$. 

\subsection{ASSG with unknown $c$}
When $c$ is unknown, we present the details of a restarting variant of ASSG in Algorithm~\ref{alg:rassg-c}, to which we refer as RASSG.  When discussing the restarting variants of ASSG-c, ASSG-r and ASSG-s, we refer to them as RSSG-c, RSSG-r, and RSSG-s, respectively,  for clarity.   The key idea  is to use an increasing sequence of $t$ and another level of restarting for ASSG. 
The convergence analysis for RASSG without knowing $c$ is presented in the following theorem.
\begin{algorithm}[t]
\caption{ASSG with Restarting: RASSG} \label{alg:rassg-c}
\begin{algorithmic}[1]
\STATE \textbf{Input}: $\w^{(0)}$, $K$, $D_1^{(1)}$, $t_1$, $\epsilon_0$ and $ \omega \in (0,1]$
\STATE Set $\epsilon_0^{(1)}=\epsilon_0$, $\eta_1=\epsilon_0/(3G^2)$
\FOR{$s=1, 2, \ldots, S$}
\STATE Let $\w^{(s)} = $ASSG-c$(\w^{(s-1)},K,t_s,D_1^{(s)},\epsilon_0^{(s)})$
\STATE Let $t_{s+1} = t_{s}2^{2(1-\theta)}$, $D_1^{(s+1)} = D_1^{(s)}2^{1-\theta}$, and $\epsilon_0^{(s+1)} = \omega \epsilon_0^{(s)}$
\ENDFOR
\STATE \textbf{Output}:  $\w^{(S)}$
\end{algorithmic}
\end{algorithm}\begin{theorem}[RASSG with unknown $c$] \label{thm:RASSG-c}
Let $\epsilon \leq \epsilon_0/4$, $\omega=1$, and $K = \lceil \log_2(\frac{\epsilon_0}{\epsilon})\rceil$  in Algorithm~\ref{alg:rassg-c}. 
Suppose $D_1^{(1)}$ is sufficiently large so that there exists $\hat\epsilon_1\in[\epsilon, \epsilon_0/2]$, with which $F(\cdot)$ satisfies a LGC~(\ref{eqn:lgc}) on $\mathcal S_{\hat\epsilon_1}$ with $\theta \in (0,1)$ and the constant $c$, and $D_1^{(1)} = \frac{c\epsilon_0}{\hat\epsilon_1^{1-\theta}}$. Let  $\hat\delta = \frac{\delta}{K(K+1)}$, and $t_1 =\max\{9, 1728\log(1/\hat\delta)\} \left( GD_1^{(1)}/\epsilon_0\right)^2$. Then with at most $S = \lceil \log_2(\hat\epsilon_1/\epsilon) \rceil + 1$ calls of ASSG-c,  Algorithm~\ref{alg:rassg-c} finds a solution $\w^{(S)}$ such that $F(\w^{(S)})-F_*\leq 2\epsilon$ with probability $1-\delta$. The total number of iterations of RASSG for obtaining $2\epsilon$-optimal solution is upper bounded by $T_S =  O(\lceil \log_2(\frac{\epsilon_0}{\epsilon})\rceil\log(1/\delta)c^2/\epsilon^{2(1-\theta)})$.
\end{theorem}
{\bf Remark:}  The above theorem requires a slightly stringent LGC condition on $\mathcal S_{\hat\epsilon_1}$ that is induced by the initial value of $D_1$. If the problem satisfies the LGC with $\theta=1$, we can give a slightly smaller value for $\theta$ in order to run Algorithm~\ref{alg:rassg-c}. If the target precision $\epsilon$ is not specified, we can give it a sufficiently small value $\epsilon'$ (e.g., the machine precision) that only affects $K$ marginally.  The corresponding iteration complexity for achieving an $\epsilon$-optimal solution is given by $O(\lceil \log_2(\frac{\epsilon_0}{\epsilon'})\rceil\log(1/\delta)/\epsilon^{2(1-\theta)})$. 
 The parameter $\omega\in(0,1]$ is introduced to increase the practical performance of RASSG, which accounts for  decrease of the objective gap of the initial solutions for each call of ASSG-c. 
\begin{proof}
Since  $K =  \lceil \log_2(\frac{\epsilon_0}{\epsilon})\rceil \geq  \lceil \log_2(\frac{\epsilon_0}{\hat\epsilon_1})\rceil$,  $D_1^{(1)} = \frac{c\epsilon_0}{\hat\epsilon_1^{1-\theta}}$, and $t_1 =\max\{9, 1728\log(1/\hat\delta)\} \left(\frac{ GD_1^{(1)}}{\epsilon_0}\right)^2$, following the proof of Theorem~\ref{thm:RSSG}, we can show that with a probability $1-\frac{\delta}{K+1}$,
\begin{equation} \label{eqn:RASSG-c}
F(\w^{(1)}) - F_* \leq 2\hat\epsilon_1.
\end{equation}
By running ASSG-c starting from $\w^{(1)}$ which satisfies (\ref{eqn:RASSG-c}) with $K =  \lceil \log_2(\frac{\epsilon_0}{\epsilon})\rceil \geq  \lceil \log_2(\frac{2\hat\epsilon_1}{\hat\epsilon_1/2})\rceil$, $D_1^{(2)} = \frac{c\epsilon_0}{(\hat\epsilon_1/2)^{1-\theta}} \geq \frac{c2\hat\epsilon_1}{(\hat\epsilon_1/2)^{1-\theta}}$, and $t_2 = \max\{9, 1728\log(1/\hat\delta)\} \left( GD_1^{(2)}/\epsilon_0\right)^2$, Theorem~\ref{thm:RSSG} ensures that
\begin{equation*} 
F(\w^{(2)}) - F_* \leq \hat\epsilon_1
\end{equation*}
with a probability at least $(1-\delta/(K+1))^2$.
By continuing the process, with $S =  \lceil \log_2(\hat\epsilon_1/\epsilon) \rceil + 1$ we can prove that with a probability at least $(1-\delta/(K+1))^S\geq 1-\delta\frac{S}{K+1}\geq 1- \delta$,
\begin{equation*} 
F(\w^{(S)}) - F_* \leq 2\hat\epsilon_1/2^{S-1} \leq 2\epsilon.
\end{equation*}
The total number of iterations for the $S$ calls of ASSG-c is bounded by
\begin{equation*} 
T_S = K\sum_{s=1}^{S}T_s = K\sum_{s=1}^{S}t_12^{2(s-1)(1-\theta)} = Kt_12^{2(S-1)(1-\theta)}\sum_{s=1}^{S} \left( 1/2^{2(1-\theta)}\right)^{S-s} \\ 
\end{equation*}
\begin{equation*} 
\leq  \frac{Kt_12^{2(S-1)(1-\theta)}}{1-1/2^{2(1-\theta)}} \leq  O\left( Kt_1 \left( \frac{\hat\epsilon_1}{\epsilon}\right)^{2(1-\theta)} \right) \leq \widetilde O(\log(1/\delta)/\epsilon^{2(1-\theta)}).
\end{equation*}
\end{proof}
{
As a corollary of the above theorem, we present a result of RASSG for problems satisfying GGC with $\theta=1/2$ but without knowing the value of $c$ (or satisfying strong convexity  but without knowing the strong convexity parameter), which is of interest to a broad audience who are familiar with stochastic strongly convex optimization.   It has been shown many machine learning problems satisfy GGC with $\theta=1/2$ (see examples presented in Section~\ref{sec:app}). Almost all existing algorithms and analysis for stochastic strongly convex optimization or problems satisfying GGC with $\theta=1/2$ require knowing the value of strong convexity parameter in order to run the algorithms~\citep{hazan-20110-beyond,RakhlinSS12}. The result is presented below. 
\begin{corollary}
Suppose $F(\cdot)$ satisfies a GGC on $\mathcal K$ with $\theta=1/2$ and some unknown constant $c>0$. Let $\epsilon \leq \epsilon_0/4$, $\omega=1$, and $K = \lceil \log_2(\frac{\epsilon_0}{\epsilon})\rceil$  in Algorithm~\ref{alg:rassg-c}. 
Suppose $D_1^{(1)}$ is sufficiently large so that there exists $\hat\epsilon_1\in[\epsilon, \epsilon_0/2]$ such that $D_1^{(1)} = \frac{c\epsilon_0}{\sqrt{\hat\epsilon_1}}$. Let  $\hat\delta = \frac{\delta}{K(K+1)}$, and $t_1 =\max\{9, 1728\log(1/\hat\delta)\} \left( GD_1^{(1)}/\epsilon_0\right)^2$. Then with at most $S = \lceil \log_2(\hat\epsilon_1/\epsilon) \rceil + 1$ calls of ASSG-c,  Algorithm~\ref{alg:rassg-c} finds a solution $\w^{(S)}$ such that $F(\w^{(S)})-F_*\leq 2\epsilon$ with probability $1-\delta$. The total number of iterations of RASSG for obtaining $2\epsilon$-optimal solution is upper bounded by $T_S =  \widetilde O(\log(1/\delta)c^2/\epsilon)$.
\end{corollary}
}
{\bf Remark:} It is notable that when the objective function is $\lambda$-strongly convex, then $c^2 = 1/\lambda$ and the above complexity  $\widetilde O(\log(1/\delta)/\lambda\epsilon)$ is optimal up to a logarithmic factor. The advantage of RASSG over previous stochastic algorithms for strongly convex optimization is that RASSG does not need to know the value of strong convexity parameter. 

\subsection{ASSG with unknown $\theta$}
When $\theta$ is unknown, we can set $\theta = 0$. Then the problem will satisfy the LGC~(\ref{eqn:lgc}) with $\theta=0$ and $c = B_{\varepsilon}$ with any $\varepsilon \geq \epsilon$, where $B_{\varepsilon} = \max_{\w\in\mathcal L_{\varepsilon}} \min_{\v\in\mathcal K_*}\|\w-\v\|_2$ is the maximum distance between the points in the $\varepsilon$-level set $\mathcal L_{\varepsilon}$ and the optimal set $\mathcal K_*$.  The following theorem states the convergence result. 
\begin{theorem}[RASSG with unknown $\theta$] \label{thm:RASSG-c2}
Let $\theta=0$,  $\epsilon \leq \epsilon_0/4$ , $\omega=1$, and $K = \lceil \log_2(\frac{\epsilon_0}{\epsilon})\rceil$ in Algorithm~\ref{alg:rassg-c}. Assume $D_1^{(1)}$ is sufficiently large  so that there exists $ \hat \epsilon_1\in[\epsilon, \epsilon_0/2]$ rendering that $D_1^{(1)} = \frac{B_{\hat\epsilon_1}\epsilon_0}{\hat\epsilon_1}$. Let $\hat\delta = \frac{\delta}{K(K+1)}$, and $t_1 =\max\{9, 1728\log(1/\hat\delta)\} \left( GD_1^{(1)}/\epsilon_0\right)^2$. Then with at most $S = \lceil \log_2(\hat\epsilon_1/\epsilon) \rceil + 1$ calls of ASSG-c, Algorithm~\ref{alg:rassg-c} finds a solution $\w^{(S)}$ such that $F(\w^{(S)})-F_*\leq 2\epsilon$. The total number of iterations of RASSG for obtaining $2\epsilon$-optimal solution is upper bounded by $T_S = O( \lceil \log_2(\frac{\epsilon_0}{\epsilon})\rceil \log(1/\delta) \frac{G^2 B_{\hat\epsilon_1}^2}{\epsilon^{2}})$.
\end{theorem}
{\bf Remark:} The Lemma~\ref{lem:key2}  shows that $\frac{B_\epsilon}{\epsilon}$ is a monotonically decreasing function in terms of $\epsilon$, which guarantees the existence of $\hat\epsilon_1$ given a sufficiently large $D_1^{(1)}$.  The iteration complexity  of RASSG could be still better with a smaller factor $B_{\hat\epsilon_1}$ than  the $B$ in the iteration complexity of SSG  (see (\ref{eqn:ssgi})), where $B$ is the domain size or the distance of initial solution to the optimal set.

\begin{proof}
The proof is similar to the proof of Theorem \ref{thm:RASSG-c}, and we reprove it for completeness. It is easy to show that $t_1\geq \frac{136 \beta_1^{(1)} G^2(1+\log (4\log t_1/\hat\delta)+\log t_1)}{\epsilon_0}$.
Following the proof of Theorem~\ref{thm:RPSG}, we then can show that with a probability $1-\frac{\delta}{S}$,
\begin{equation} \label{eqn:RASSG-r}
F(\w^{(1)}) - F_* \leq 2\hat\epsilon_1
\end{equation}
with $K =  \lceil \log_2(\frac{\epsilon_0}{\epsilon})\rceil \geq  \lceil \log_2(\frac{\epsilon_0}{\hat\epsilon_1})\rceil$ and $\beta_1^{(1)} = \frac{2c^2\epsilon_0}{\hat\epsilon_1^{2(1-\theta)}}$. By running ASSG-r starting from $\w^{(1)}$ which satisfies (\ref{eqn:RASSG-r}) with $K =  \lceil \log_2(\frac{\epsilon_0}{\epsilon})\rceil \geq  \lceil \log_2(\frac{2\hat\epsilon_1}{\hat\epsilon_1/2})\rceil$, $t_2 = t_12^{2(1-\theta)} \geq \frac{136\beta_1^{(2)} G^2(1+\log (4\log t_2/\hat\delta)+\log t_2)}{\epsilon_0}$ and $\beta_1^{(2)} = \frac{2c^2\epsilon_0}{(\hat\epsilon_1/2)^{2(1-\theta)}} \geq \frac{2c^2\hat\epsilon_1/2}{(\hat\epsilon_1/2)^{2(1-\theta)}}$, Theorem~\ref{thm:RPSG} ensures that
\begin{equation*} 
F(\w^{(2)}) - F_* \leq \hat\epsilon_1
\end{equation*}
with a probality at least $(1-\delta/S)^2$.
By continuing the process, with $S =  \lceil \log_2(\hat\epsilon_1/\epsilon) \rceil + 1$, we can prove that with a probality at least $(1-\delta/S)^S\geq 1-\delta$
\begin{equation*} 
F(\w^{(S)}) - F_* \leq 2\hat \epsilon_1/2^{S-1} \leq 2\epsilon
\end{equation*}
The total number of iterations for the $S$ calls of ASSG-c is bounded by
\begin{equation*} 
T_S = K\sum_{s=1}^{S}T_s = K \sum_{s=1}^{S}t_12^{2(s-1)(1-\theta)} = Kt_12^{2(S-1)(1-\theta)}\sum_{s=1}^{S} \left( 1/2^{2(1-\theta)}\right)^{S-s} \\ 
\end{equation*}
\begin{equation*} 
 \leq  \frac{Kt_12^{2(S-1)(1-\theta)}}{1-1/2^{2(1-\theta)}} \leq  O\left(K t_1 \left( \frac{\hat\epsilon_1}{\epsilon}\right)^{2(1-\theta)} \right) \leq \widetilde O(\log(1/\delta)/\epsilon^{2(1-\theta)})
\end{equation*}
\end{proof}
Finally, we make several remarks about the Algorithm~\ref{alg:rassg-c}: (1) if $\theta = 1$, in order to obtain an increasing sequence of $t_s$, $\theta$ can be set to a little smaller value than $1$ (for example, 0.95); (2) if $D_1^{(1)}$  in RASSG-c and $\beta_1^{(1)}$  in RASSG-r are determined, the starting number of iterations $t_1$ can be automatically set since $t_1 \propto D_1^{(1)}$ in RASSG-c  and $t_1 \propto \beta_1^{(1)}$ in RASSG-r; (3) after the first call of ASSG, one can re-calibrate the $\epsilon_0$ in the implementation to improve the performance or equivalently tune $\omega$ in practice; (4) the tradeoff is that the stopping criterion for RASSG is not as automatic as ASSG.


\section{Proximal ASSG for Non-smooth Composite Optimization}\label{sec:prox}
To obtain solutions with certain structures, many machine learning problems add a regularizer to the objective function (e.g., adding $\ell_1$ regularizer for sparsity). 
When the regularizers are non-smooth but have closed form of proximal mapping, some proximal algorithms can be employed to solve the regularized problems. 
As an extension of ASSG, in this section, we will present a proximal variant of ASSG for solving the following non-smooth composite optimization problem:
\begin{equation}\label{eqn:composite}
\min_{\w\in\R^d}F(\w)\triangleq \underbrace{\E_{\xi}[f(\w; \xi)]}\limits_{f(\w)} + R(\w), 
\end{equation}
where both $f(\w)$ and $R(\w)$ are non-smooth convex functions. The above problem commonly appears in machine learning, which is also known as regularized risk minimization. We assume that the function $R(\w)$ is simple enough such that the proximal mapping given below is easy to compute
\begin{equation*}
\textrm{Prox}_{\Omega}^{\eta, R}[\w] = \arg\min_{\u \in \Omega} \frac{1}{2} \|\u-\w\|_2^2 + \eta R(\u),
\end{equation*}
where  $\Omega \subseteq \R^d$ is a bounded ball. An example of $R(\w)$ is the $\ell_1$-norm $R(\w)=\lambda\|\w\|_1$.  We also make the following assumption throughout this section.
\begin{ass}\label{ass:3} For a stochastic optimization problem~(\ref{eqn:composite}), we assume
\begin{enumerate}
\item there exist $\w_0\in\R^d$ and $\epsilon_0\geq 0$ such that $F(\w_0) - F_*\leq \epsilon_0$;
\item There exist two constants $G$ and $\rho$ such that $\|\partial f(\w; \xi)\|_2 \leq G$ and $\|\partial R(\w; \xi)\|_2 \leq \rho$.
\end{enumerate}
\end{ass}
Assumption~\ref{ass:3} is quite similar as Assumption~\ref{ass:1} except for an additional assumption of $\|\partial R(\w; \xi)\|_2 \leq \rho$.

We present the detail steps of proximal ASSG (ProxASSG) in Algorithm~\ref{alg:assgc-prox}, which is similar to Algorithm~\ref{alg:rssg} except that Step 5  is replaced by a proximal mapping:
\begin{equation*}
\w^k_{\tau+1} = \textrm{Prox}_{\Omega_k}^{\eta_k, R} \left[\w^k_{\tau} - \eta_k \partial f(\w^k_{\tau}; \xi^k_\tau)\right],
\end{equation*}
where $\Omega_{k}$ is a ball centered at $\w_{k-1}$ with a radius $D_{k}$.  The convergence result is stated in the following theorem:
\begin{theorem}\label{thm:proxASSGc}
Suppose Assumptions \ref{ass:3} and \ref{ass:2} hold for a target $\epsilon\ll 1$. Given $\delta\in(0,1)$, let $\tilde\delta = \delta/K$,  $K = \lceil \log_2(\frac{\epsilon_0}{\epsilon})\rceil$, $D_1\geq \frac{c\epsilon_0}{\epsilon^{1-\theta}}$ and $t$ be the smallest integer such that $t \geq \max\left\{\max(16, 3072\log(1/\tilde\delta)) \frac{G^2D_1^2}{\epsilon_0^2}, \frac{8\rho D_1}{\epsilon_0} \right\}$. Then ProxASSG guarantees that, with a probability $1-\delta$, 
\begin{equation*}
F(\w_K) - F_* \leq 2 \epsilon.
\end{equation*}
As a result, the iteration complexity of ProxASSG for achieving an $2\epsilon$-optimal solution with a high probability $1-\delta$ is $\widetilde O (\log(1/\delta)/\epsilon^{2(1-\theta)})$ provided $D_1=O(\frac{c\epsilon_0}{\epsilon^{(1-\theta)}})$.
\end{theorem}
\begin{algorithm}[t]
\caption{the ProxASSG algorithm for solving~(\ref{eqn:composite})} \label{alg:assgc-prox}
\begin{algorithmic}[1]
\STATE \textbf{Input}: the  number of stages $K$, the number of iterations $t$ per stage, and the initial solution $\w_0$, $\eta_1=\epsilon_0/(4G^2)$ and $D_1\geq \frac{c\epsilon_0}{\epsilon^{1-\theta}}$
\FOR{$k=1,\ldots, K$}
\STATE Let $\w^k_1 = \w_{k-1}$, $\Omega_k = \mathcal B(\w_{k-1},D_k)$
\FOR{$\tau=1,\ldots, t$}
    \STATE Update $\w^k_{\tau+1} = \textrm{Prox}_{\Omega_k}^{\eta_k,R} \left[\w^k_{\tau} - \eta_k \partial f(\w^k_{\tau}; \xi^k_\tau)\right]$
   \ENDFOR
\STATE Let $\w_k = \frac{1}{t}\sum_{\tau=1}^t\w^k_\tau$
\STATE Let $\eta_{k+1} = \eta_k/2$ and $D_{k+1} = D_k/2$. 
\ENDFOR
\STATE \textbf{Output}:  $\w_K$
\end{algorithmic}
\end{algorithm}
To prove Theorem~\ref{thm:proxASSGc}, we need  the following lemma for each stage of ProxASSG.
\begin{lemma}\label{thm:proxSSG}
Let $D$ be the upper bound of $\|\w_1 - \w_{1,\epsilon}^\dagger\|_2$. Apply $t$-iterations of following steps:
\begin{equation*}
\w_{\tau+1} =  {\arg\min}_{\w \in  \mathcal B(\w_1,D)} \frac{1}{2} \|\w- \w_{\tau} \|_2^2 + \eta \partial f(\w_{\tau}; \xi_\tau)^\top \w + \eta R(\w). 
\end{equation*}
Given $\w_1\in\R^d$, for any $\delta\in(0,1)$, with a probability at least $1-\delta$,
\begin{equation*}
F(\wh_t)  - F(\w_{1,\epsilon}^{\dagger}) \leq  \frac{\eta G^2}{2} +\frac{ \|\w_1 -\w_{1,\epsilon}^{\dagger}\|_2^2 }{2\eta t} + \frac{4GD\sqrt{3\log(1/\delta)}}{\sqrt{t}}  +  \frac{\rho D}{t},
\end{equation*}
where $\wh_t = \sum_{\tau=1}^t\w_t/t$. 
\end{lemma}
The proof of Lemma~\ref{thm:proxSSG} is deferred to~\ref{app:proxSSG}. With the above lemma, the proof of Theorem~\ref{thm:proxASSGc}  is similar to that of Theorem~\ref{thm:RSSG}. We include the details in~\ref{app:proxASSGc}.

Before ending this section, we note that the presented ProxASSG algorithm in Algorithm~\ref{alg:assgc-prox} is based on the constrained version of ASSG. One can also develop a proximal variant based on the regularized version of ASSG. We include the details in~\ref{app:proxASSGR}. However, the convergence guarantee of proximal ASSG based on the regularized version is slightly worse than that based on the constrained version by a constant factor depending on $G$ and $\rho$.

\section{Complexity of ASSG for Ensuing the Gradient is Small}\label{sec:sta}
Recently, there has been an increasing interest in the complexity of stochastic algorithms for finding a solution for a convex optimization problem with a small gradient~\citep{DBLP:conf/nips/Allen-Zhu18,DBLP:journals/corr/abs-1902-04686}. However, these studies assume the smoothness of the objective function. The non-smoothness of the objective function make it more challenging to design stochastic algorithms and characterize their complexity of making the gradient small.

The first challenge is how to quantify the convergence in terms of gradient for a non-smooth problem. A traditional measure is using the distance from $0$ to the subgradient (a set) of the objective function at a solution $\x\in\mathcal K$, i.e., $\text{dist}(0, \partial (f(\x) + 1_{\mathcal K}(\x))$, where $1_{\mathcal K}$ is the indicator function of the domain $\mathcal K$. However, for a non-smooth function  finding an $\epsilon$-level stationary point (i.e.,  $\text{dist}(0, \partial (f(\x) + 1_{\mathcal K}(\x))\leq \epsilon$) is difficult. For example, considering the simple function $f(x) = |x|$, as long as $x\neq 0$ the traditional measure $\text{dist}(0, \partial (f(\x) + 1_{\mathcal K}(\x)) = 1$ is never 0. To address this challenge, previous studies on non-smooth optimization have used a new convergence measure based on the Moreau envelop of the objective function.  A Moreau envelope of $F(\w)$ associated with a positive constant $\lambda>0$ is defined as: 
\begin{equation}\label{moreau:env}
F_{\lambda}(\w) = \min_{\v\in\mathcal K}\left\{ F(\v) + \frac{\lambda}{2}\|\v -\w\|^2\right\},
\end{equation}
and the  associated proximal mapping is defined as 
\begin{equation}\label{moreau:prox}
\widetilde \w = \text{Prox}_{F/\lambda}(\w):=\arg \min_{\v\in\mathcal K}\left\{ F(\v) + \frac{\lambda}{2}\|\v -\w\|^2\right\}.
\end{equation}
It is easy to show that  $F_\lambda(\cdot)$ is a smooth function whose gradient is $\lambda$-Lipchitz continuous~\citep{Bauschke:2011:CAM:2028633} and $\widetilde \w$ satisfies~\citep{davis2018stochastic}:
\begin{equation*}
\begin{aligned}
F(\widetilde\w) &\leq  F(\w),\\
\nabla F_{\lambda}(\w) & = \lambda (\w - \widetilde\w ) \\
\textrm{dist}(0;\partial F(\widetilde\w))& \leq   \|\nabla F_{\lambda}(\w)\|.
\end{aligned}
\end{equation*}
It means that if  $\|\nabla F_{\lambda}(\w)\|\leq \epsilon$ then $\w$ is close to  some point $\widetilde\w$ that is an $\epsilon$-stationary solution for the problem (\ref{eqn:psg}). 
This gives a new convergence measure  in terms of gradient for a non-smooth function. We call a solution $\w$ an $\epsilon$-nearly stationary point if the following inequality holds for some constant $\lambda>0$:
\begin{align}
\|\nabla F_{\lambda}(\w)\| \leq \epsilon.
\end{align}
It is also notable that when $F$ is $L$-smooth~\footnote{whose gradient is $L$-Lipchitz continuous.} and the constraint domain is the whole space $\mathcal K = \R^d$, then an $\epsilon$-nearly stationary point $\w$ also implies that it is $O(\epsilon)$-stationary in the traditional sense, i.e., $\|\nabla F(\w)\|\leq O(\epsilon)$. This can be easily seen from $\|\nabla F(\w)\|\leq \|\nabla F(\widetilde \w)\| + \|\nabla F(\w) - \nabla F(\widetilde \w)\|\leq \|\nabla F_{\lambda}(\w)\| + L\|\w - \widetilde \w\| = (1 + \frac{L}{\lambda})\|\nabla F_{\lambda}(\w)\|\leq (1+L/\lambda)\epsilon$. 

Next, we  give a simple lemma that will be useful for our analysis later.  
\begin{lemma}\label{lemm:moreau}
For any $\w\in\mathcal K$, it holds
\begin{equation}\label{grad:con:1}
\|\nabla F_{\lambda}(\w)\|^2 \leq  2 \lambda (F(\w) - F(\w_*)),
\end{equation}
where $\w_*\in\mathcal K_*$. 
\end{lemma}
\begin{proof}
We first show that $ \arg\min_{\w\in\mathcal K}F_{\lambda}(\w) = \arg\min_{\w\in\mathcal K}F(\w)$. Let us consider any $\widetilde\w_*\in\widetilde{\mathcal K}_*$.  Then for any  $\v, \w \in\mathcal K$, $F_{\lambda}(\widetilde\w_*) \leq F_{\lambda}(\w) \leq F(\v) + \frac{\lambda}{2}\|\v -\w\|^2 $.
Let $\v = \w = \w_*$, we have 
\begin{equation}\label{moreau:inq1}
F_{\lambda}(\widetilde \w_*) \leq F_{\lambda}(\w_*) \leq F(\w_*).
\end{equation}
On the other hand, if we let $\widehat \v := \arg\min_{\v\in\mathcal K}\{ F(\v) + \frac{\lambda}{2}\|\v - \widetilde \w_*\|^2\} $, then
\begin{equation}\label{moreau:inq2}
F(\w_*) \leq F(\widehat \v) \leq  F(\widehat \v)  + \frac{\lambda}{2}\|\widehat\v -\widetilde\w_*\|^2 =  F_{\lambda}(\widetilde \w_*).
\end{equation}
Therefore, by (\ref{moreau:inq1}) and (\ref{moreau:inq2}) we have $F(\w_*) =  F_{\lambda}(\widetilde \w_*)$. 
Next, let $\widetilde\w : =  \arg\min_{\v\in\mathcal K} F(\v) + \frac{\lambda}{2}\|\v -\w\|^2 $. 
By the smoothness of $F_{\lambda}(\w)$, we have 
\begin{align*}
F_{\lambda}(\widetilde\w) - F_{\lambda}(\w) \leq  & \nabla F_{\lambda}(\w)^\top (\widetilde\w - \w) + \frac{\lambda}{2}\|\widetilde\w - \w\|^2 \\
=&  - \frac{1}{\lambda} \| \nabla F_{\lambda}(\w)\|^2 + \frac{1}{2 \lambda}\|\nabla F_{\lambda}(\w)\|^2 
\end{align*}
Rewriting above inequality and combining with $F_{\lambda}(\widetilde\w_*) \leq F_{\lambda}(\widetilde\w)$ we get
\begin{equation*}
\frac{1}{2 \lambda}\|\nabla F_{\lambda}(\w)\|^2 \leq  F_{\lambda}(\w) - F_{\lambda}(\widetilde\w_*).
\end{equation*}
By the definition of $F_{\lambda}(\w)$, for any $\w\in\mathcal K$, we have $F_{\lambda}(\w) \leq F(\w)$. 
Therefore, we have
\begin{equation*}
\frac{1}{2 \lambda}\|\nabla F_{\lambda}(\w)\|^2 \leq  F(\w) - F(\w_*).
\end{equation*}
\end{proof}

Next, we will characterize the complexity of ASSG for finding an $\epsilon$-nearly stationary point for the problem (\ref{eqn:psg}) under the LGC by leveraging the result in Lemma~\ref{lemm:moreau}.
\begin {theorem}\label{thm:ASSG:wo:grad}
Under the same setting in Theorem~\ref{thm:RSSG} or Theorem~\ref{thm:RPSG}, then with a high probability $1-\delta$, ASSG-c or ASSG-r guarantees that $\|\nabla F_{1/4}(\w_K)\| \leq  \epsilon$ with the iteration complexity of $ \widetilde O (1/\epsilon^{4(1-\theta)})$.\\
\end {theorem}
{\bf Remark.} \cite{DBLP:conf/nips/Allen-Zhu18} considered  stochastic gradient descent (SGD) with recursive regularization to solve smooth and convex problems and provided a complexity $\widetilde O(1/\epsilon^2)$ for achieving an $\epsilon$-stationary point. In contrast, we focus on non-smooth problems in this paper. When $\theta>\frac{1}{2}$, our methods achieve better complexities.

\begin{proof}
Let $\lambda = \frac{1}{4}$ and $\w = \w_K$ in (\ref{grad:con:1}) of Lemma~\ref{lemm:moreau}, we get
\begin{equation}\label{thm:assg:grad:ineq1}
\|\nabla F_{1/4}(\w_K)\|^2 \leq  \frac{1}{2}(F(\w_K) - F(\w_*)).
\end{equation}
Let $\epsilon=\epsilon^2$ in Theorem~\ref{thm:RSSG} or Theorem~\ref{thm:RPSG}, we know that with high probability $1-\delta$,
\begin{equation}\label{thm:assg:grad:ineq2}
F(\w_K) - F(\w_*) \leq 2\epsilon^2.
\end{equation}
By (\ref{thm:assg:grad:ineq1}) and (\ref{thm:assg:grad:ineq2}) we have
\begin{equation}\label{thm:assg:grad:ineq3}
\|\nabla F_{1/4}(\w_K)\| \leq  \epsilon.
\end{equation}
%
\end{proof}

\section{Applications in Risk Minimization}\label{sec:app}
In this section, we present some applications of the proposed ASSG to risk minimization in machine learning. Let $(\x_i, y_i), i = 1, \dots, n$ denote a set of pairs of feature vectors and labels that follow a distribution $\mathcal P$, where $\x_i\in\X\subset\R^d$ and $y_i\in \mathcal Y$. Many machine learning problems end up solving the regularized empirical risk minimization problem:
\begin{equation}\label{eqn:ERM}
\min_{\w \in \R^d} F(\w) = \frac{1}{n}\sum_{i=1}^{n}\ell( \w^\top\x_i, y_i) + \lambda R(\w),
\end{equation}
where $R(\w)$ is a regularizer, $\lambda$ is the regularization parameter and $\ell(z,y)$ is a loss function. Below we will present several examples in machine learning that enjoy faster convergence by the proposed ASSG than by SSG.

\subsection{Piecewise Linear Minimization}
First, we consider some examples of non-smooth and non-strongly convex problems such that ASSG can achieve linear convergence. In particular, we consider the problem~(\ref{eqn:ERM}) with a piecewise linear loss and $\ell_1$, $\ell_\infty$ or $\ell_{1,\infty}$ regularizers. 

Piecewise linear loss includes hinge loss~\citep{Vapnik1998}, generalized hinge loss~\citep{bartlett2008classification}, absolute loss~\citep{The-Elements-of-Statistical-Learning-2009}, and $\epsilon$-insensitive loss~\citep{rosasco2004loss}. For particular forms of these loss functions, please refer to~\citep{DBLP:journals/ml/YangMJZ14Non}.  
The epigraph of $F(\w)$ defined by sum of a piecewise linear loss function and an $\ell_1$, $\ell_{\infty}$ or $\ell_{1,\infty}$ norm regularizer is a polyhedron. According to the polyhedral error bound condition~\citep{DBLP:journals/corr/arXiv:1512.03107}, for any $\epsilon>0$  there exists a constant $0<c<\infty$ such that 
\begin{equation*}
dist(\w,\mathcal K_*) \leq c (F(\w) - F_*)
\end{equation*}
for any $\w\in\mathcal S_\epsilon$, meaning that the proposed ASSG has an $O(\log(\epsilon_0/\epsilon))$ iteration complexity for solving such family of problems. Formally, we state the result in the following corollary. 
\begin{corollary}
Assume the loss function $\ell(z,y)$ is piecewise linear, then the problem in~(\ref{eqn:ERM}) with $\ell_1$, $\ell_{\infty}$ or $\ell_{1,\infty}$ norm regularizer satisfy the LGC in~(\ref{eqn:lgc2}) with $\theta=1$. Hence ASSG can have an iteration complexity of $O(\log(1/\delta)\log(\epsilon_0/\epsilon))$ with a high probability $1-\delta$.  
\end{corollary}

\subsection{Piecewise Convex Quadratic  Minimization}
In this subsection, we consider some examples of piecewise quadratic  minimization problems in machine learning  and show that ASSG enjoys an iteration complexity of $\widetilde O\left(\frac{1}{\epsilon}\right)$. We first give an definition of piecewise convex quadratic functions, which is from~\citep{DBLP:journals/mp/Li13}. A function $g(\w)$ is a real polynomial if there exists $k\in\mathbb N^+$ such that $g(\w) = \sum_{0\leq |\alpha^j|\leq k}\lambda_j \prod_{i=1}^dw_i^{\alpha^j_i}$, where $\lambda_j\in\R$ and $\alpha^j_i\in\mathbb N^+\cup\{0\}$, $\alpha^j=(\alpha^j_1,\ldots, \alpha^j_d)$, and $|\alpha^j| =\sum_{i=1}^d\alpha^j_i$. The constant $k$ is called the degree of $g$. A continuous function $F(\w)$ is said to be a piecewise convex polynomial if there exist finitely many polyhedra $P_1,\ldots, P_m$ with $\cup_{j=1}^mP_j = \R^d$ such that the restriction of $F$ on each $P_j$ is a convex polynomial. Let $F_j$ be the restriction of $F$ on $P_j$. The degree of a piecewise convex polynomial function $F$  is the maximum of the degree of each $F_j$. If the degree is $2$, the function is referred to as a piecewise convex quadratic function. Note that a piecewise convex quadratic function is not necessarily a smooth function nor a convex function~\citep{DBLP:journals/mp/Li13}.

For examples of piecewise convex quadratic problems in machine learning, one can consider the problem~(\ref{eqn:ERM}) with a huber loss, squared hinge loss or square loss, and $\ell_1$, $\ell_{\infty}$, $\ell_{1,\infty}$, or huber norm regularizer~\citep{DBLP:conf/pkdd/ZadorozhnyiBMSK16}. The huber function is defined as
\begin{equation*}
    \ell_\delta(z) = \left\{\begin{array}{ll}\frac{1}{2}z^2   \textrm{ if } |z|\leq \delta,\\ \delta(|z| - \frac{1}{2}\delta)   \textrm{otherwise},\end{array}\right.
\end{equation*}
which is a piecewise convex quadratic function. The huber loss function $\ell(z,y)=\ell_\delta(z-y)$ has been used for robust regression. A huber regularizer is defined as $R(\w)=\sum_{i=1}^d\ell_\delta(w_i)$. 

It has been shown that~\citep{DBLP:journals/mp/Li13}, if $F(\w)$ is convex and  piecewise convex quadratic, then it satisfies the LGC~(\ref{eqn:lgc2}) with $\theta=1/2$. 
The corollary below summarizes the iteration complexity of ASSG for solving these problems. 
\begin{corollary}
Assume the loss function $\ell(z,y)$ is a convex and  piecewise convex quadratic, then the problem in~(\ref{eqn:ERM}) with $\ell_1$, $\ell_{\infty}$,  $\ell_{1,\infty}$ or huber norm regularizer satisfy the LGC in~(\ref{eqn:lgc2}) with $\theta=1/2$. Hence ASSG can have an iteration complexity of $\widetilde O(\frac{\log(1/\delta)}{\epsilon})$ with a high probability $1-\delta$.  
\end{corollary}
{\bf Remark:} The Lipschitz continuity assumption for some loss functions (e.g., squared hinge loss and square loss) can be easily satisfied by adding a boundness constraint on the solution.  
We note that a recent work~\citep{Ming2017adaAGC} also studied the piecewise convex quadratic minimization  problems under the error bound condition. They explore the smoothness of the loss functions and develop deterministic accelerated gradient methods with a linear convergence. In contrast, the proposed ASSG is a stochastic algorithm and does not rely on the smoothness assumption. One might also notice that several recent works~\citep{DBLP:journals/corr/GongY14,DBLP:conf/pkdd/KarimiNS16} have showed the linear convergence of SVRG by exploring the smoothness of the loss function and a similar condition as in~(\ref{eqn:lgc2}) with $\theta=1/2$. However, their required condition is a global growth condition that is required to hold for any $\w\in\R^d$. 

Indeed, a convex and  piecewise convex quadratic function enjoy a global growth condition~\citep{DBLP:journals/mp/Li13}:
\begin{equation*}
    dist(\w,\mathcal K_*)\leq c  [F(\w) - F_* + (F(\w) - F_*)^{1/2}], \quad \forall \w\in\R^d.
\end{equation*}
It remains an open problem that how to leverage such a global growth condition to develop a linear convergence for SVRG and other similar algorithms for solving finite-sum smooth problems, which is beyond the scope of this work. Nevertheless, using the above global growth condition we can reduce the iteration complexity by a $\log(\epsilon_0/\epsilon)$ factor for ASSG. We include the details in~\ref{app:PCQM}.

\subsection{Structured composite non-smooth problems}
Next, we present a corollary of our main result regarding the following structured problem:
\begin{equation}\label{eqn:ss}
\min_{\w\in\R^d}F(\w) \triangleq  h(X\w) + R(\w).
\end{equation}
where $X\in\R^{n\times d}$, $h(\u)$ is a strongly convex function (not necessarily a smooth function) on any compact set and $R(\w)$ is $\ell_1$, $\ell_\infty$ or $\ell_{1,\infty}$ norm regularizer. The corollary below formally states the LGC of the above problem and the iteration complexity of ASSG. 
\begin{corollary}\label{cor:ss}
Assume $h(\u)$ is a strongly convex function on any compact set and $P(\w)$ is polyhedral, then the problem in~(\ref{eqn:ss})  satisfies the LGC in~(\ref{eqn:lgc2}) with $\theta=1/2$. Hence ASSG can have an iteration complexity of $\widetilde O(\frac{\log(1/\delta)}{\epsilon})$ with a high probability $1-\delta$.  
\end{corollary}
The proof of the first part of Corollary~\ref{cor:ss} can be found in~\citep{DBLP:journals/corr/arXiv:1512.03107}. One example of $h(\u)$ is $p$-norm error ($p\in(0,1)$), where  $h(\u)=\frac{1}{n} \sum_{i=1}^{n} |u_i-y_i|^p$. The local strong convexity of the $p$-norm error ($p\in(1,2)$) is shown in~\citep{Goebel_localstrong}. 

Finally, we give an example that satisfies the LGC with intermediate values $\theta\in(0, 1/2)$.  We can consider an $\ell_1$ constrained $\ell_p$ norm regression~\citep{doi:10.1080/03610928308828618}: 
 \begin{equation*}
\min_{\|\w\|_1\leq s}F(\w)\triangleq \frac{1}{n}\sum_{i=1}^n(\x_i^{\top}\w - y_i)^p,\quad p\in 2\mathbb N^+.
\end{equation*}
\cite{Ming2017adaAGC} have shown that the problem above satisfies the LGC in~(\ref{eqn:lgc2}) with $\theta=\frac{1}{p}$.

\begin{table*}[t]
		\caption{Statistics of real datasets}
		\centering
		\label{table1}
		{\begin{tabular}{l|l|l|ll}
			\toprule
			Dataset &  \#Training $(n)$ &  \#Features $(d)$  & Problem Type \\
			\midrule
covtype.binary   & 581,012  & 54   &Classification \\
real-sim   &72,309 &  20,958  &Classification \\
url  & 2,396,130	&  3,231,961  & Classification \\
avazu  & 40,428,967  & 1,000,000   &Classification \\
gisette  & 6,000	&  5,000  & Classification \\
kdd 2010 raw   & 19,264,097  & 1,163,024   &Classification \\
news20.binary &19,996 &  1,355,191	  & Classification \\
rcv1.binary   & 20,242  & 47,236   &Classification \\
webspam & 350,000 & 16,609,143   &Classification \\
 \midrule
 million songs &  463,715 &  90 & Regression \\
E2006-tfidf   &16,087 &  150,360 &  Regression\\
E2006-log1p	& 16,087	&  4,272,227&  Regression\\
     			\bottomrule
		\end{tabular}}
	\end{table*}

\begin{figure}[th]
\centerline{\includegraphics[scale=.25]{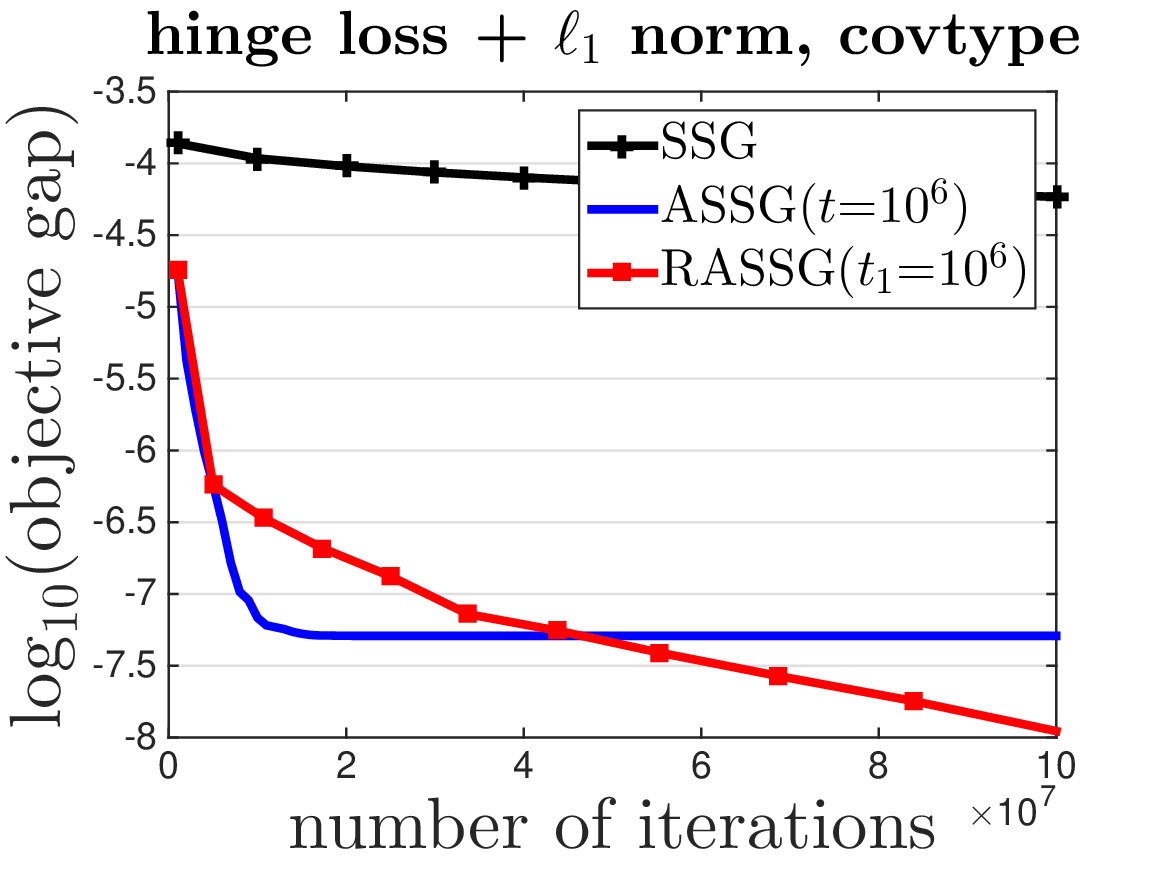}
		\includegraphics[scale=.25]{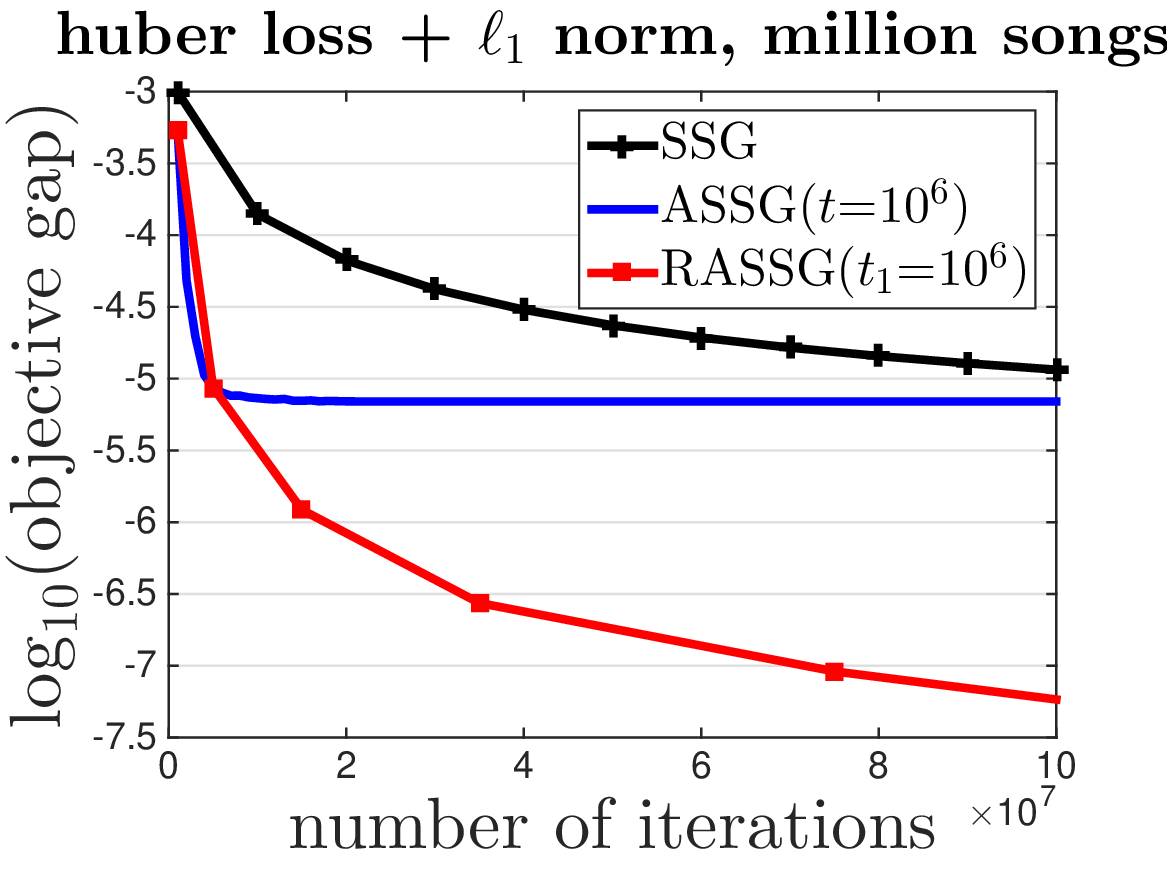}
		\includegraphics[scale=.25]{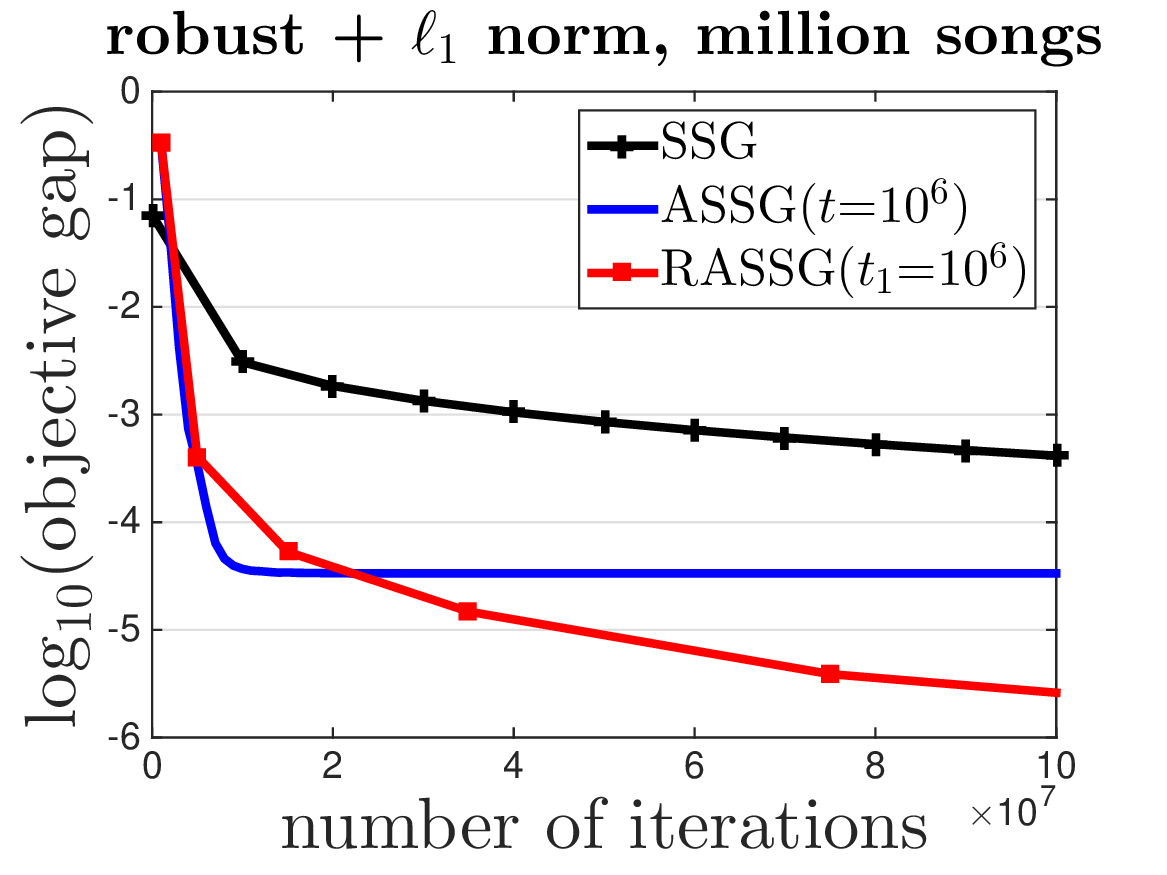}
}
\centerline{\includegraphics[scale=.25]{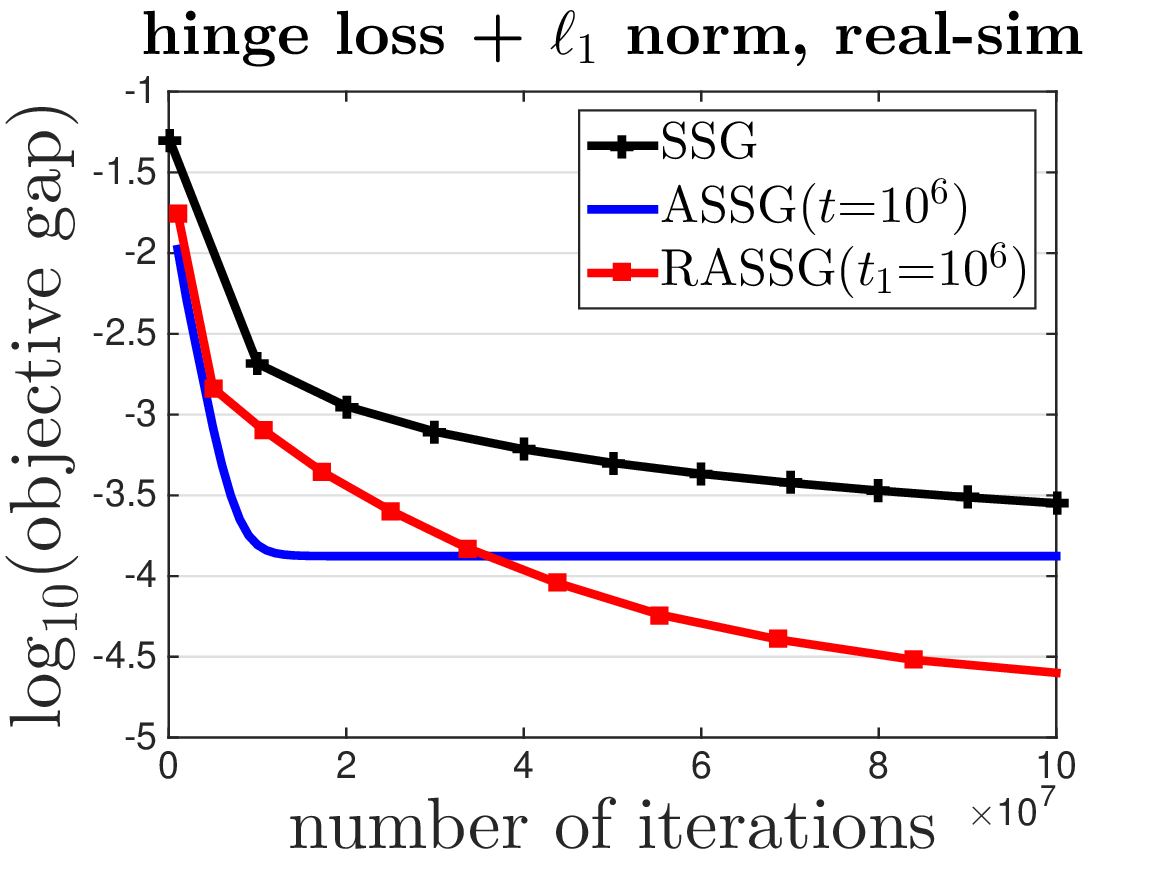}
		\includegraphics[scale=.25]{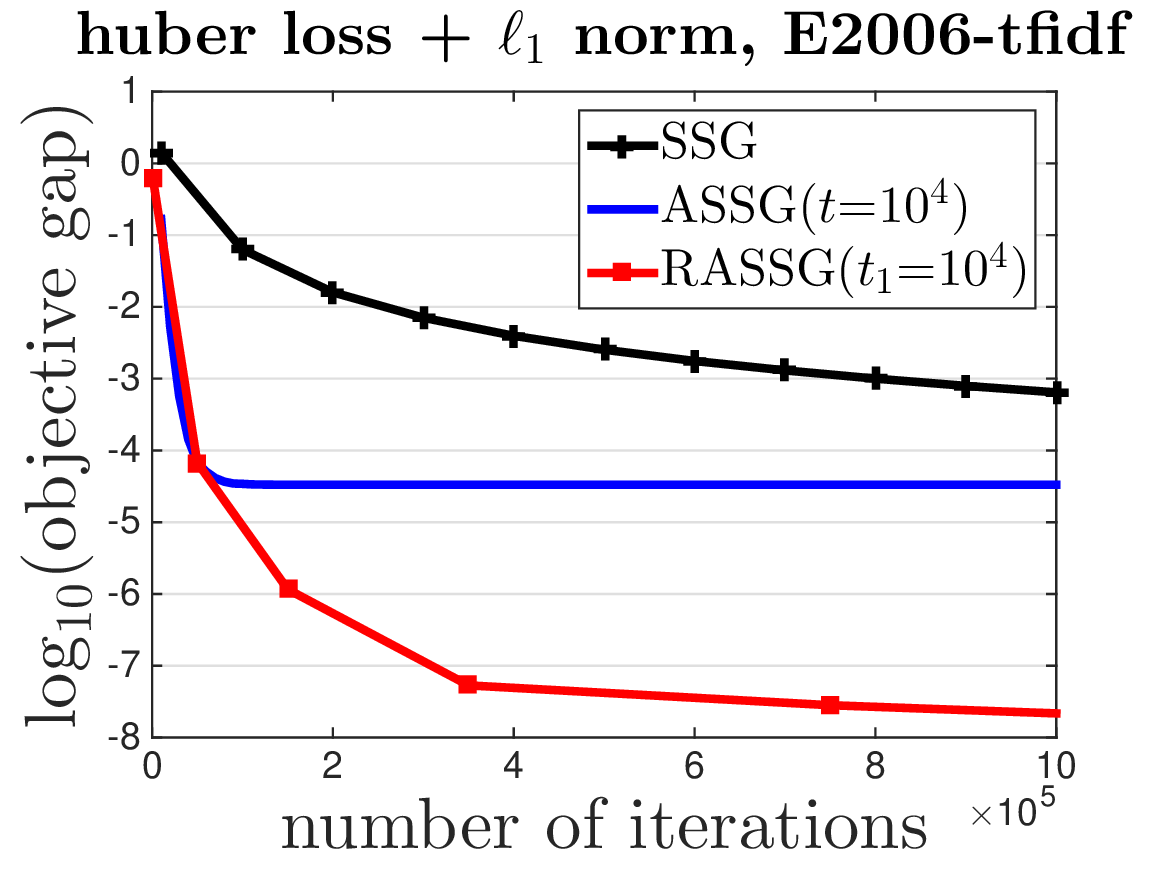}
		\includegraphics[scale=.25]{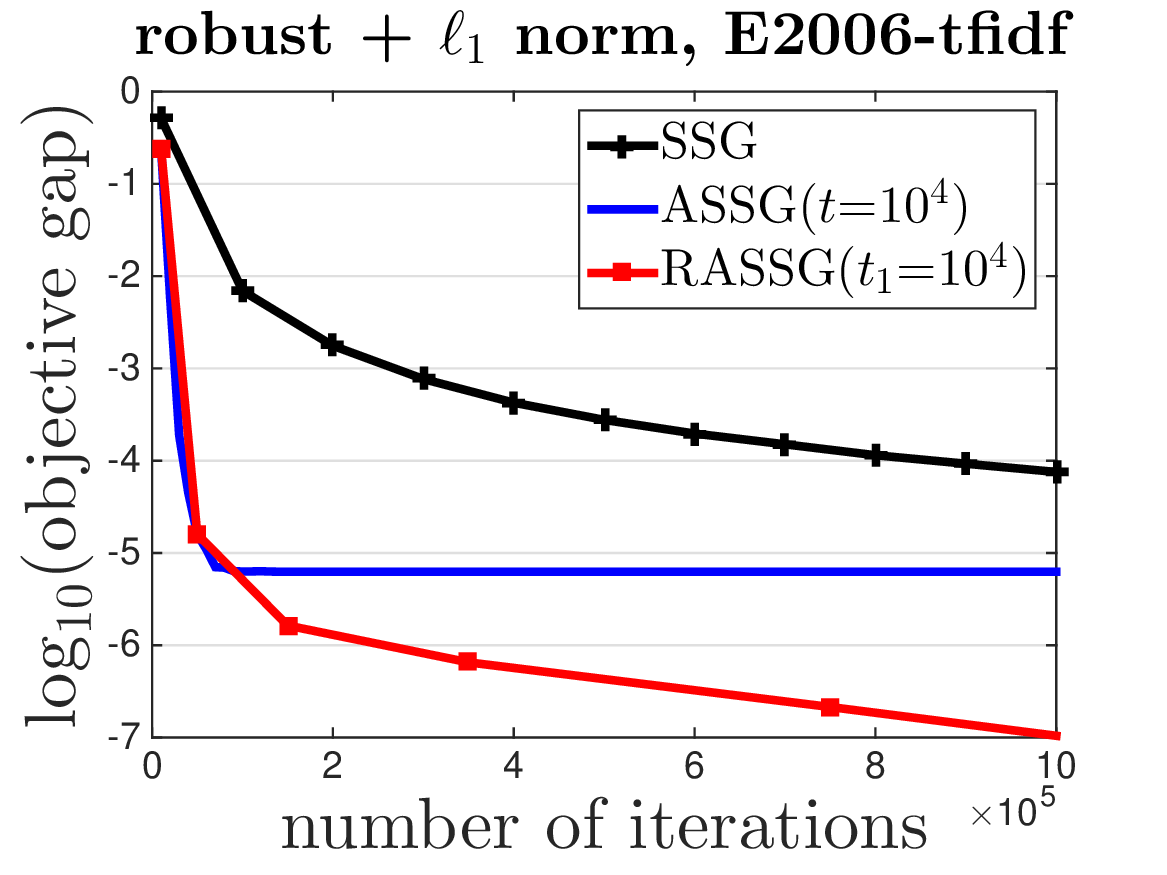}
}
\vspace*{8pt}
\caption{Comparison of different algorithms for solving different problems on different datasets ($\lambda=10^{-4}$).}
\label{fig01}
\end{figure}

\begin{figure}[th]
\centerline{\includegraphics[scale=.25]{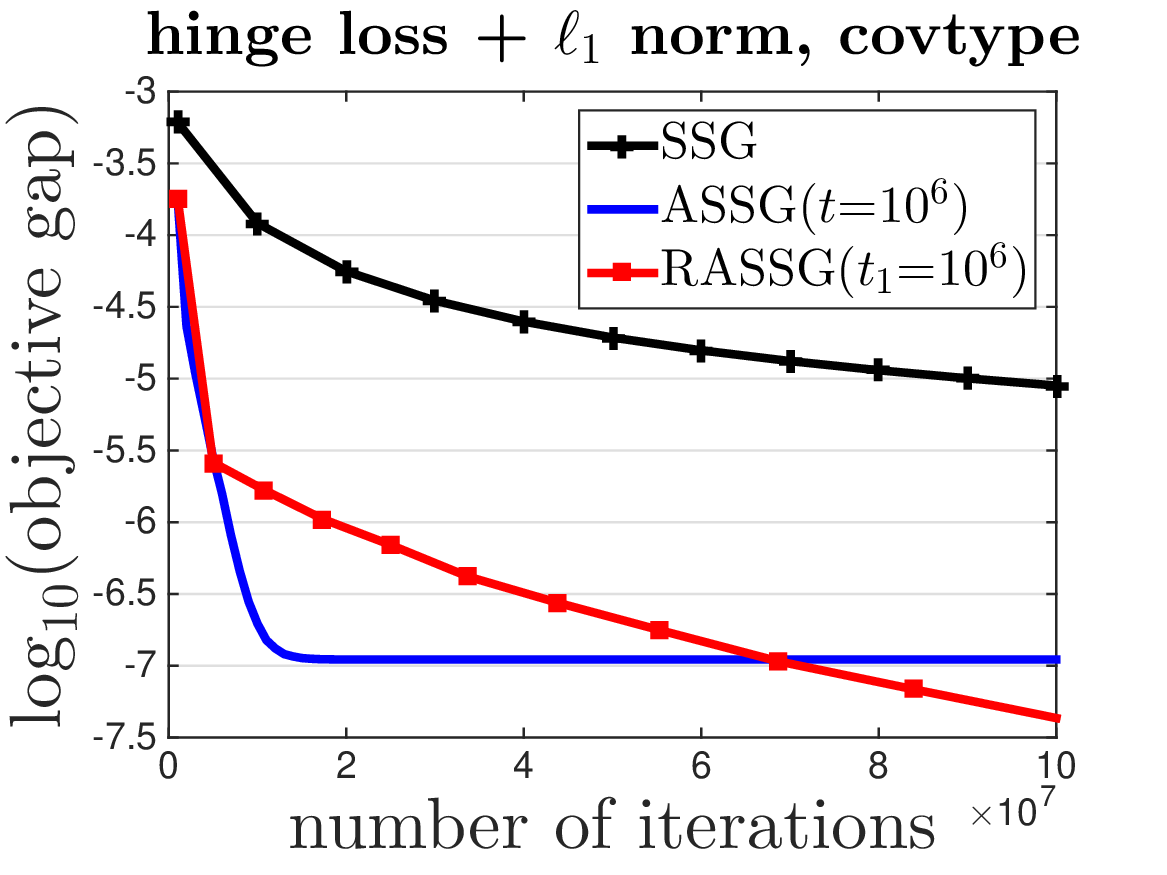}
		\includegraphics[scale=.25]{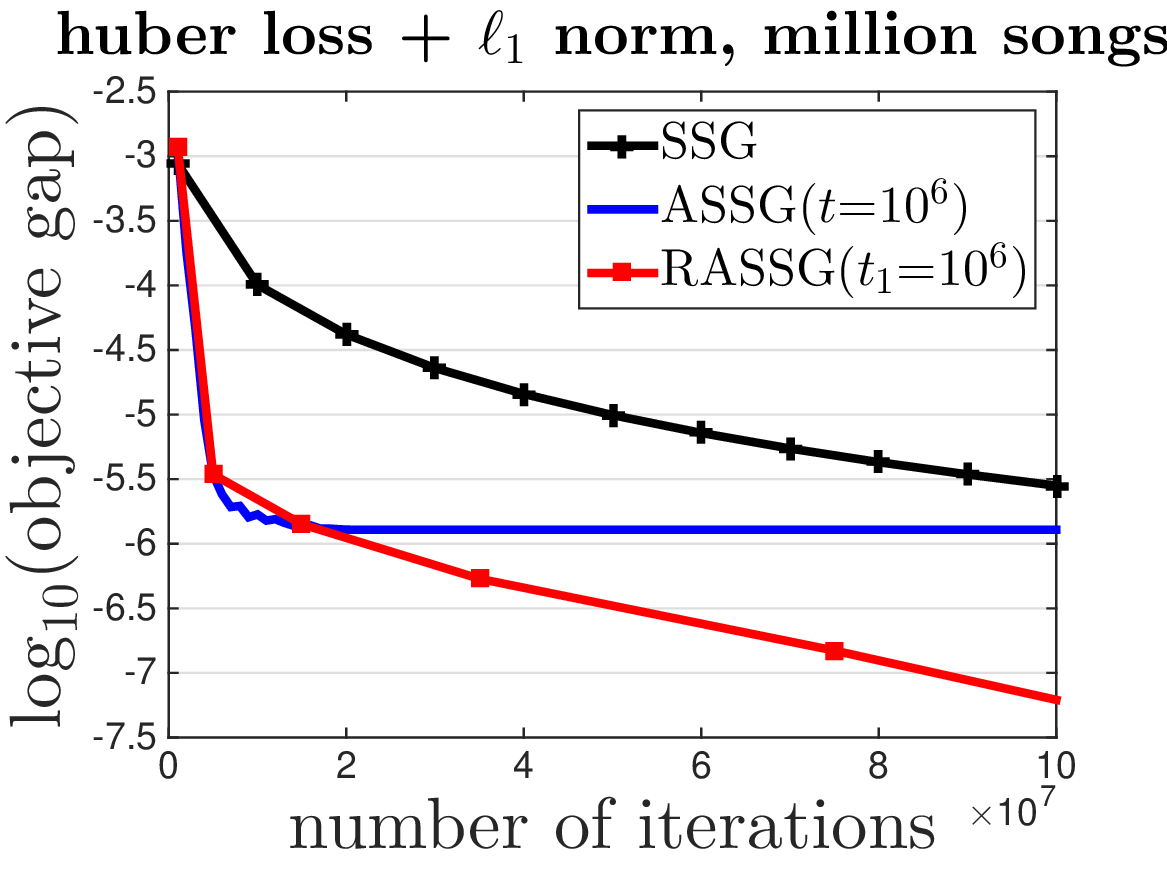}
		\includegraphics[scale=.25]{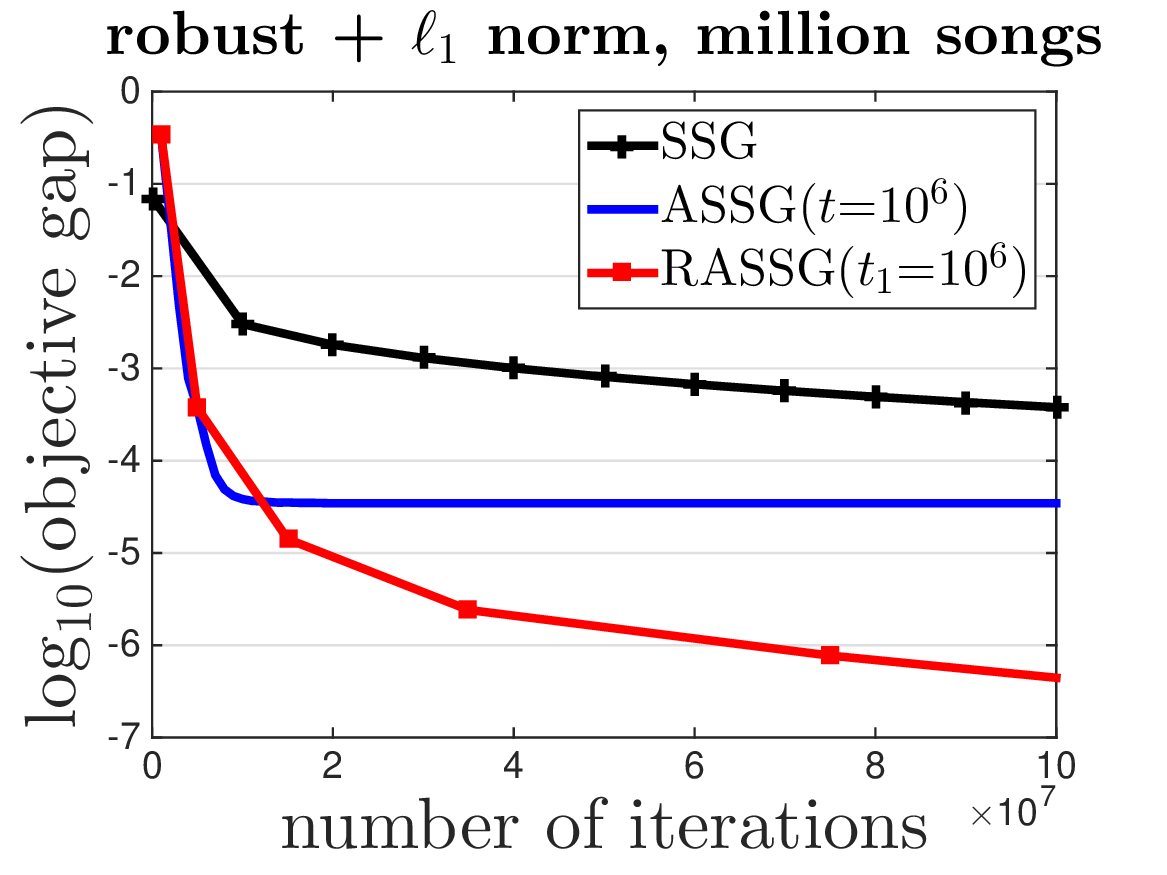}
}
\centerline{\includegraphics[scale=.25]{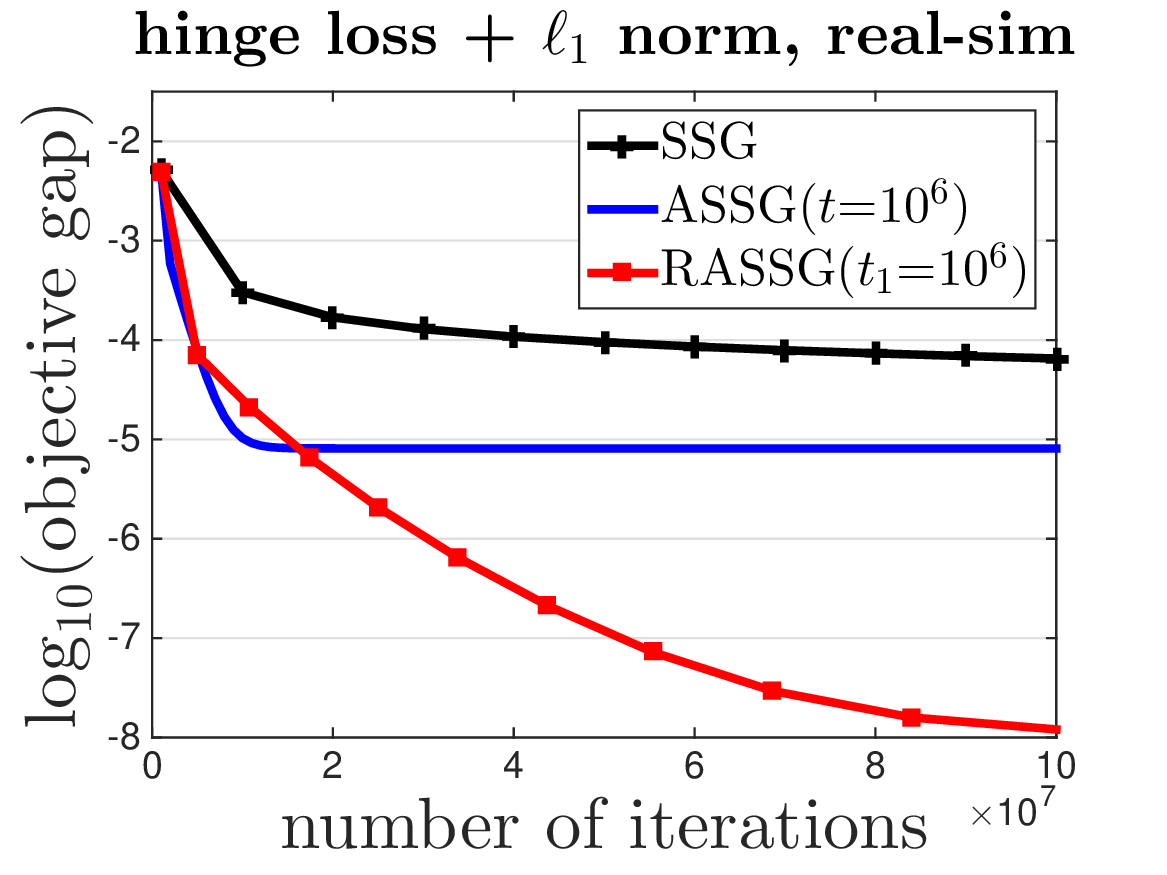}
		\includegraphics[scale=.25]{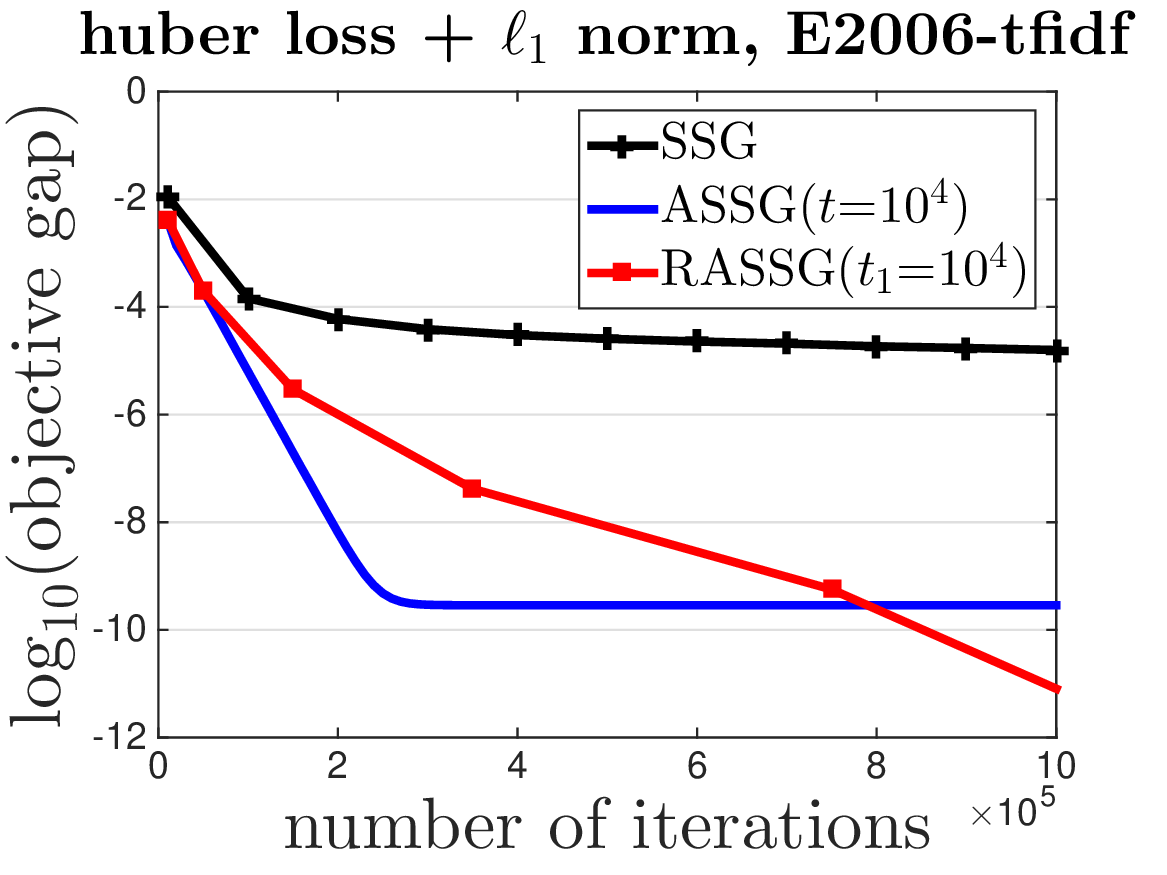}
		\includegraphics[scale=.25]{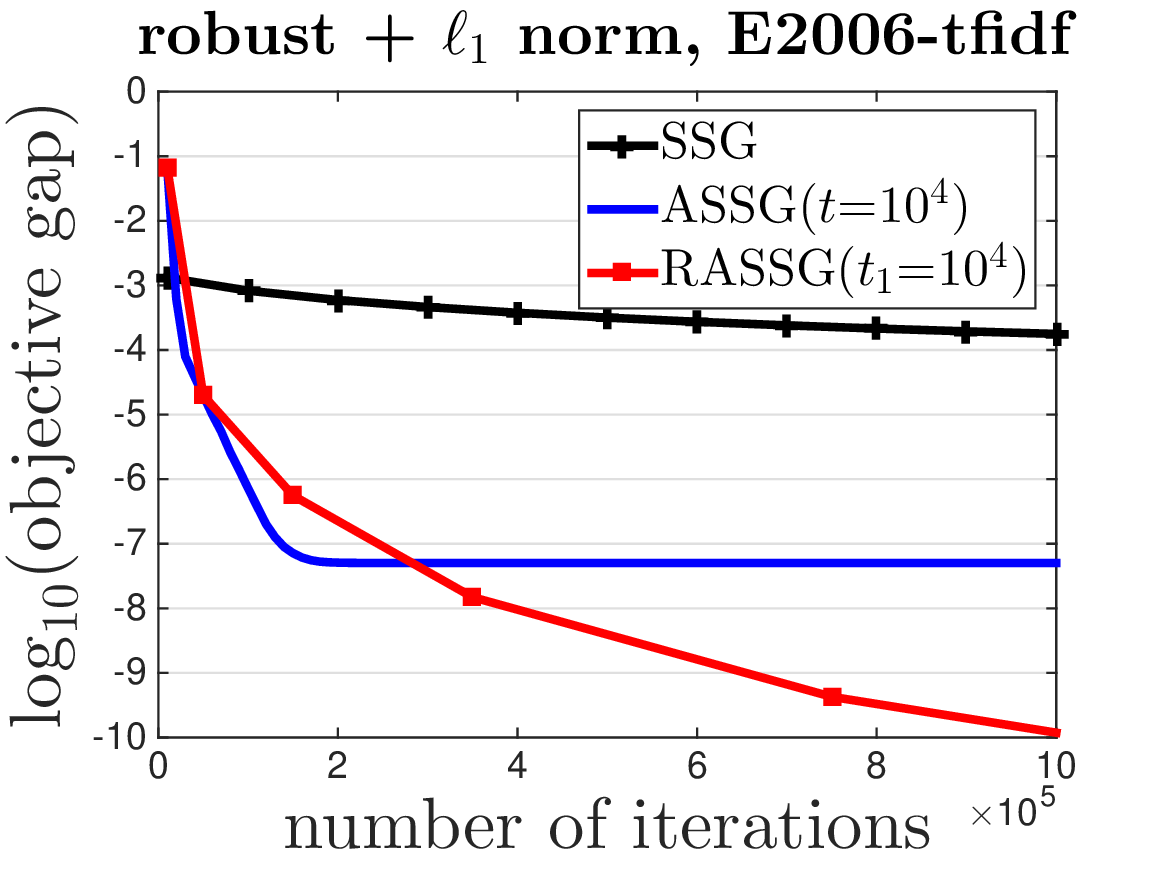}
}
\vspace*{8pt}
\caption{Comparison of different algorithms for solving different problems on different datasets ($\lambda=10^{-2}$).}
\label{fig02}
\end{figure}

\begin{figure}[th]
\subfigure[$\lambda=10^{-4}$]{
\centerline{\includegraphics[scale=.25]{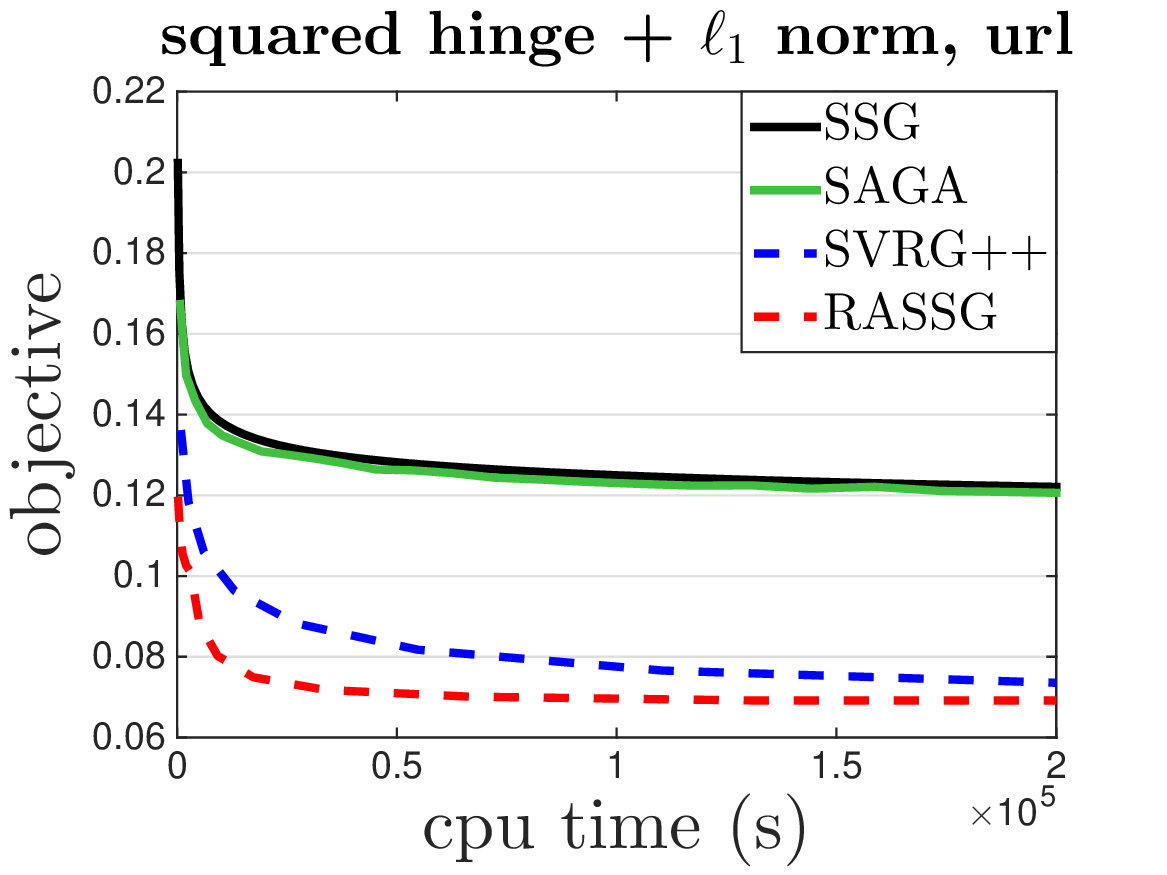}
		\includegraphics[scale=.25]{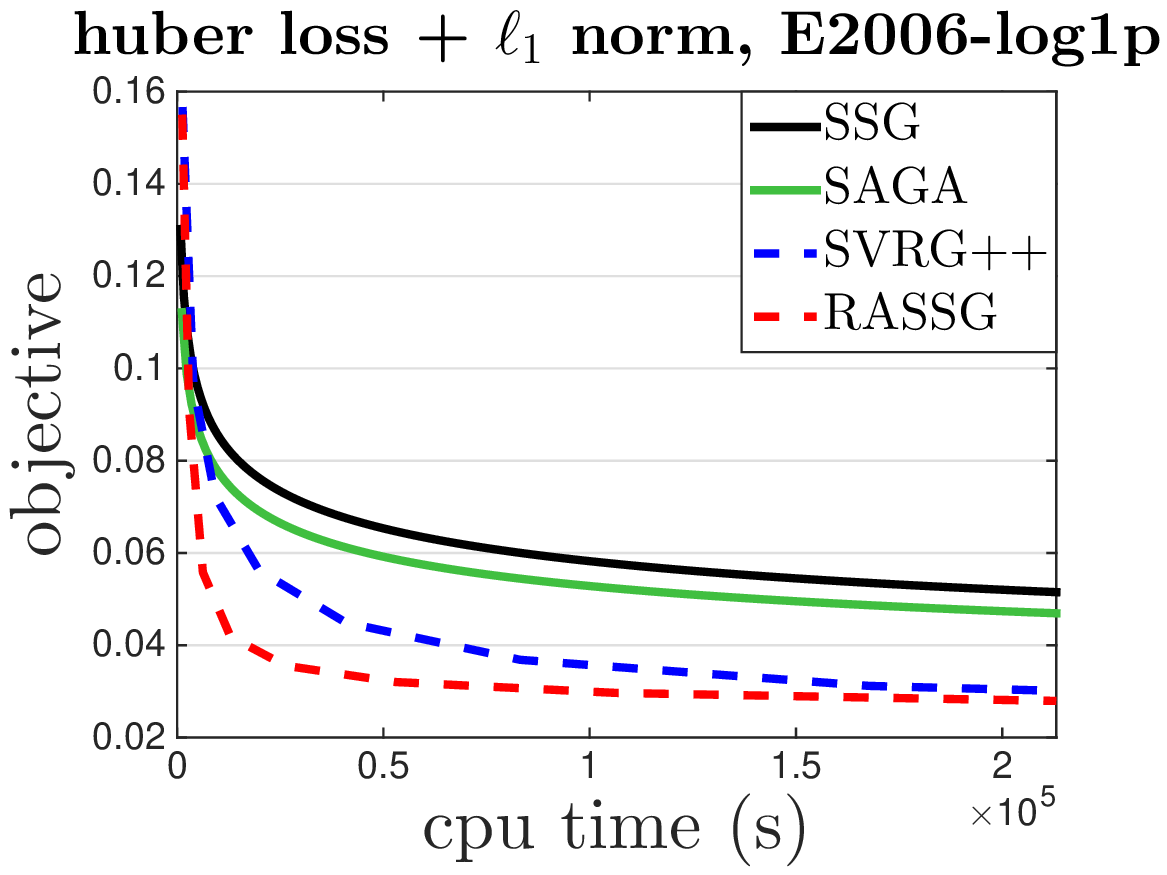}
		}	
}
\subfigure[$\lambda=10^{-2}$]{
\centerline{\includegraphics[scale=.25]{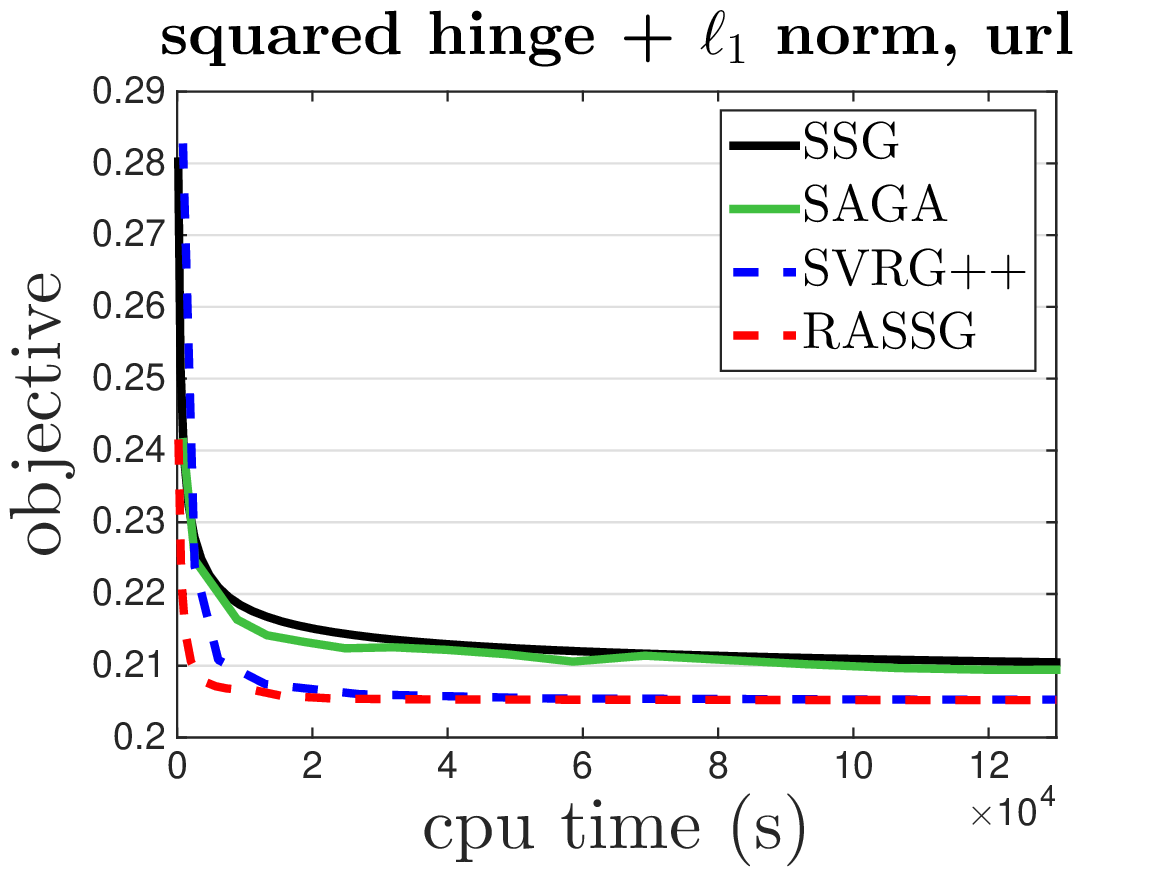}
		\includegraphics[scale=.25]{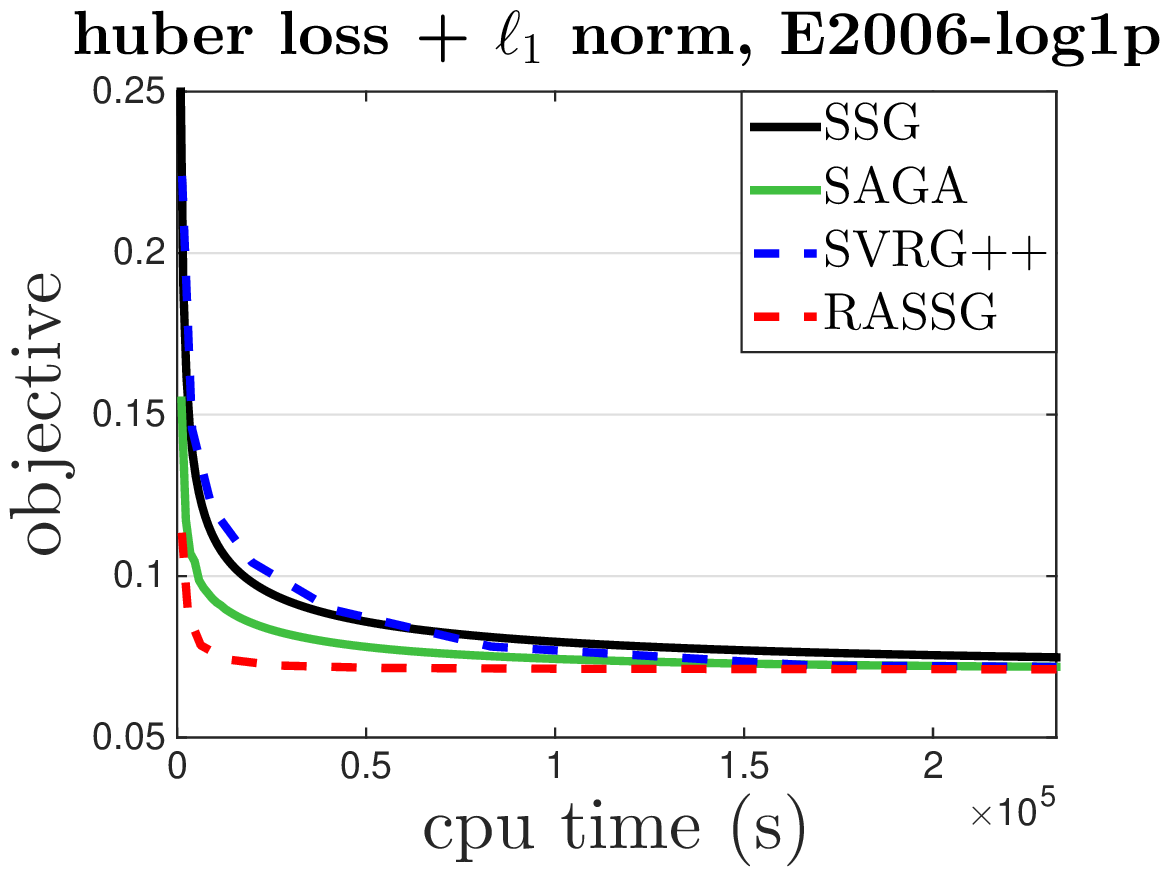}
		}
}
\vspace*{8pt}
\caption{Comparison of different algorithms for solving different problems on different datasets.}
\label{fig03}
\vspace*{8pt}
\end{figure}

\begin{figure}[th]
\centerline{
		\includegraphics[scale=.25]{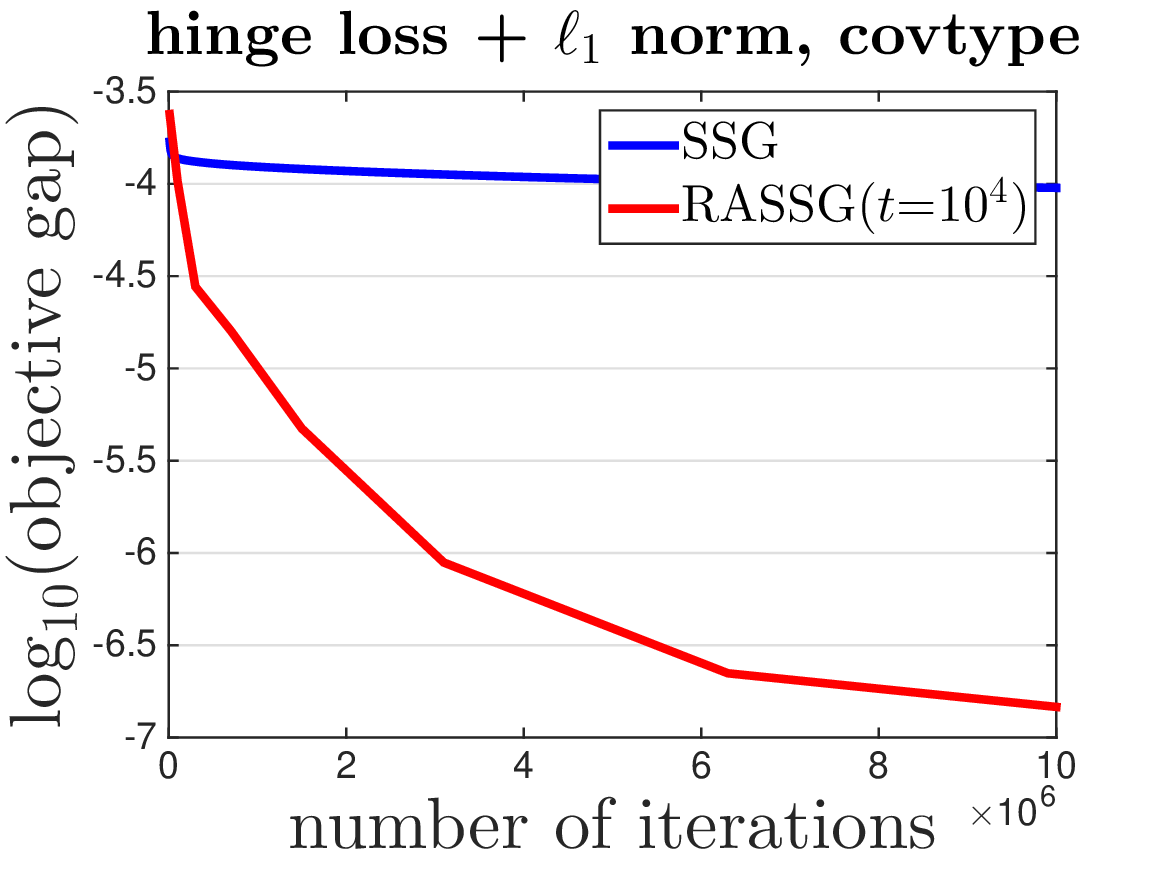}		
		\includegraphics[scale=.25]{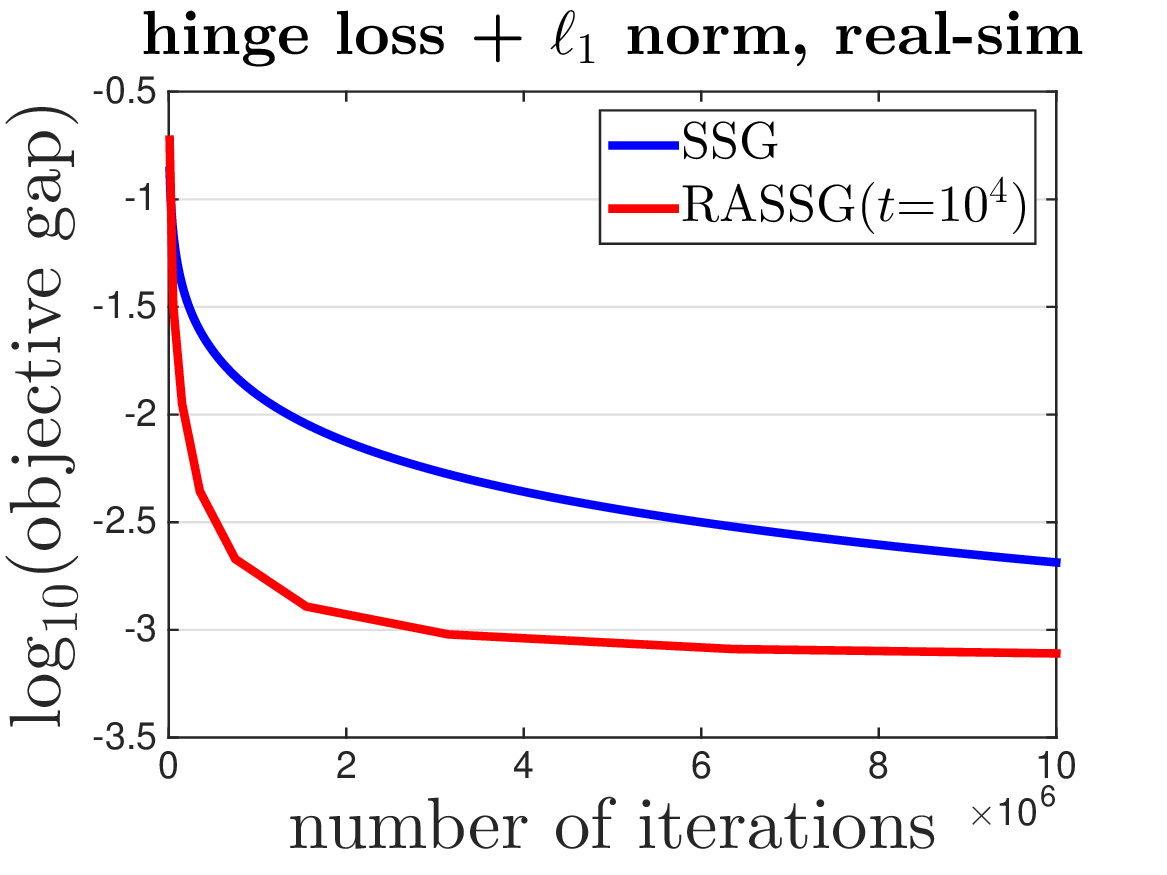}
		\includegraphics[scale=.25]{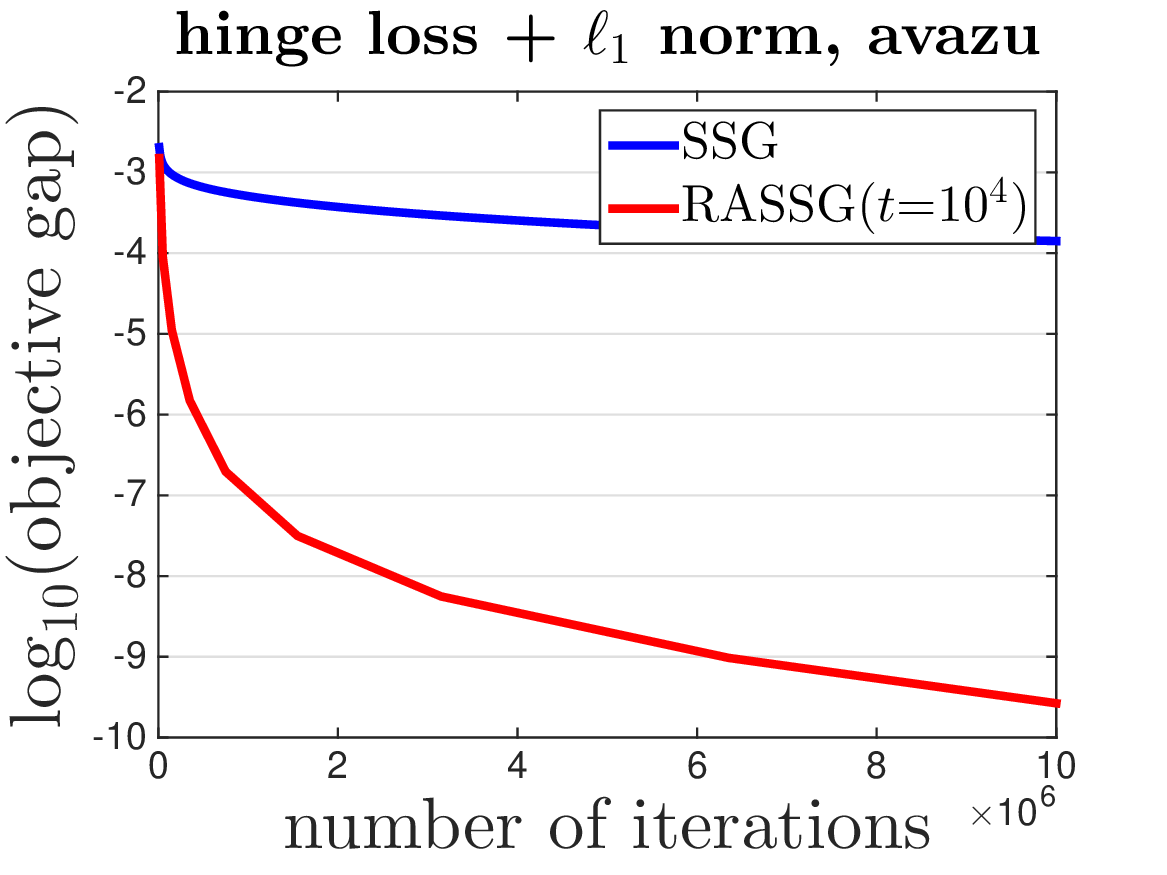}
}
\centerline{
		\includegraphics[scale=.25]{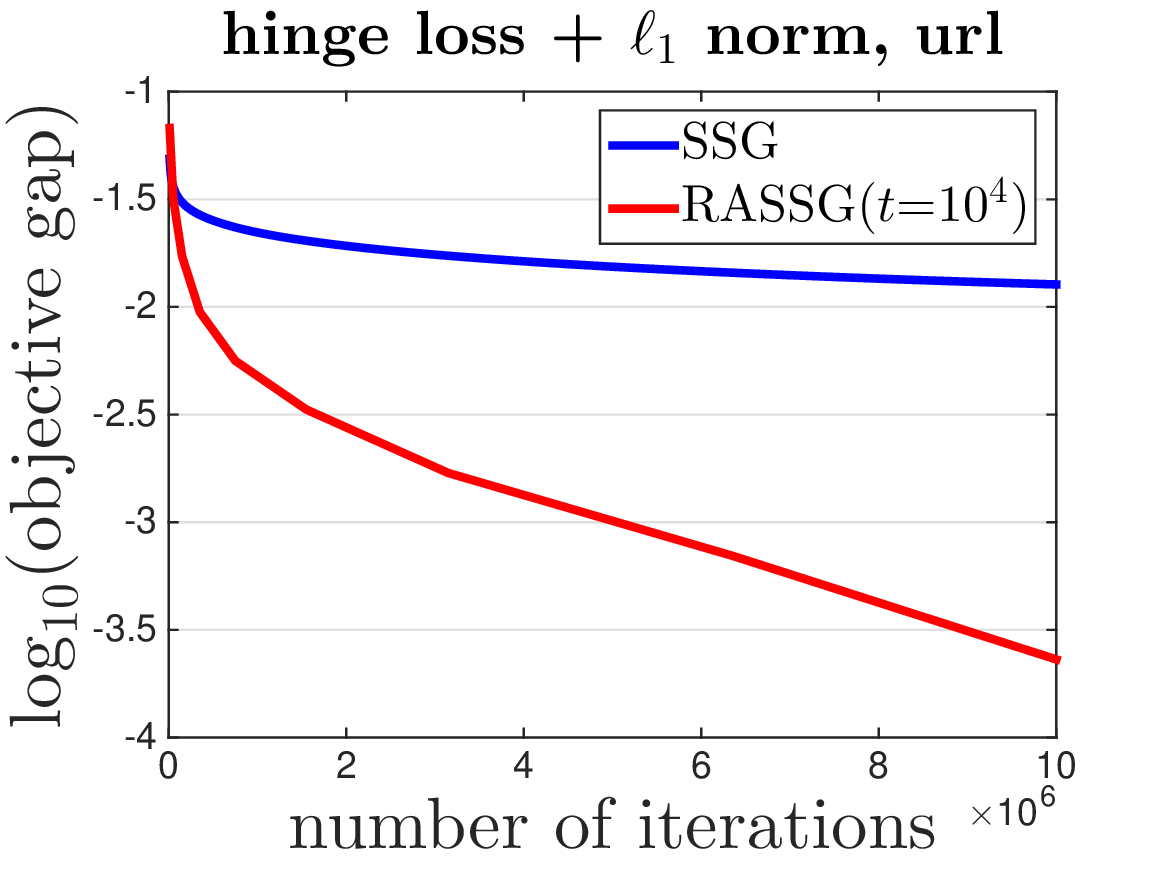}
		\includegraphics[scale=.25]{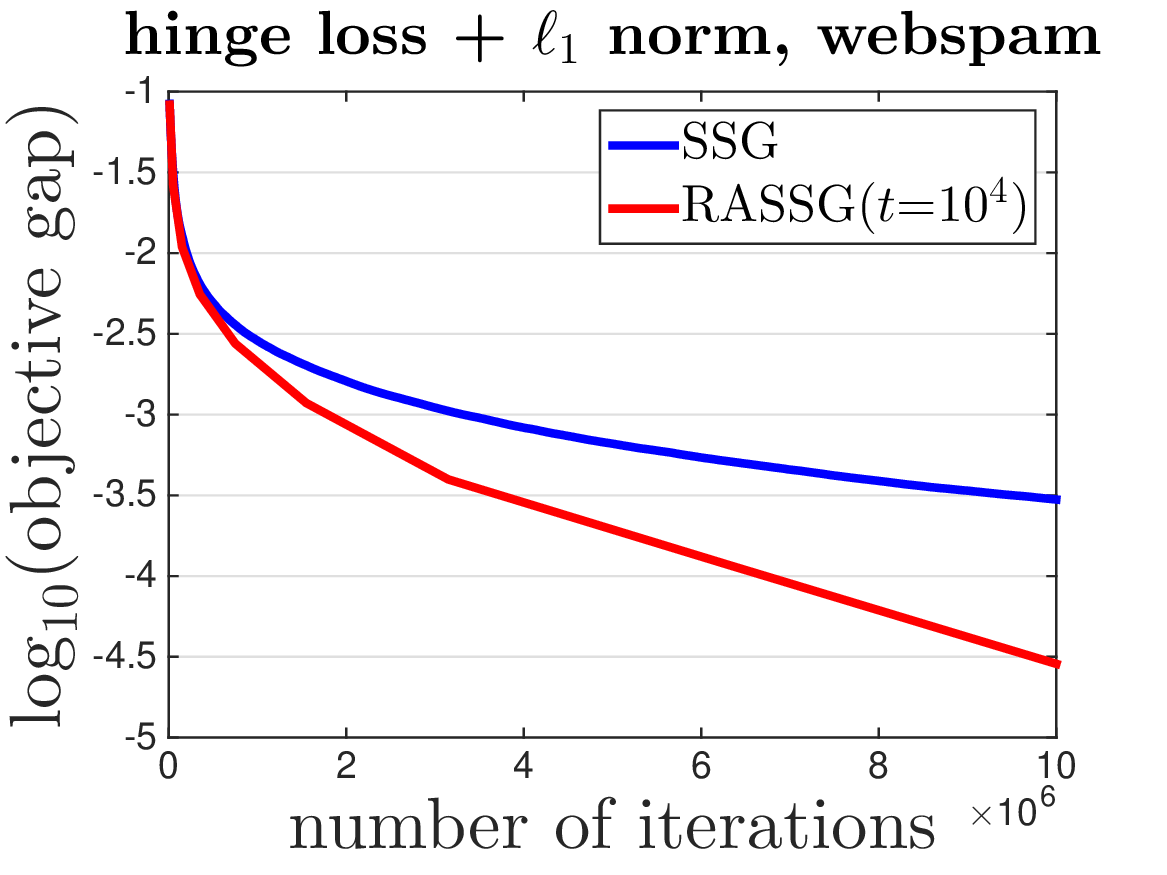}
		\includegraphics[scale=.25]{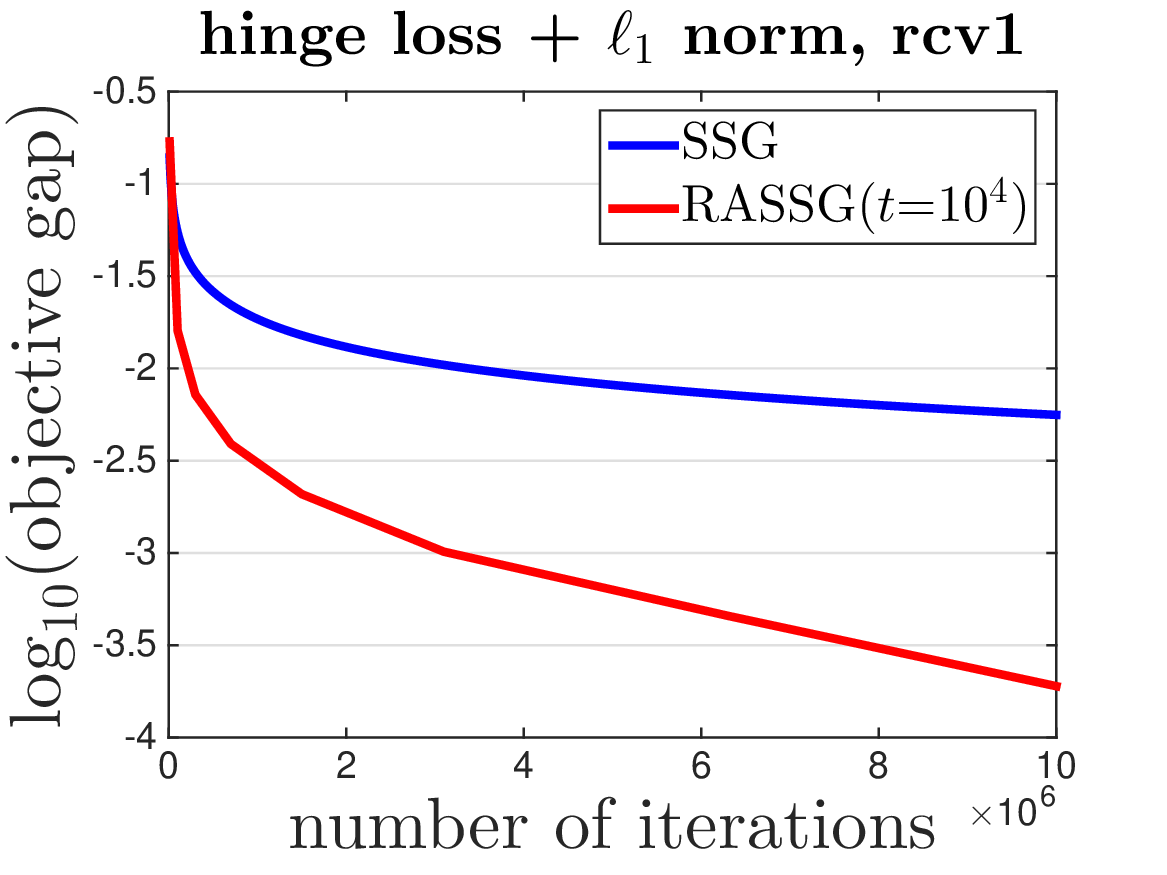}
}
\centerline{								
		\includegraphics[scale=.25]{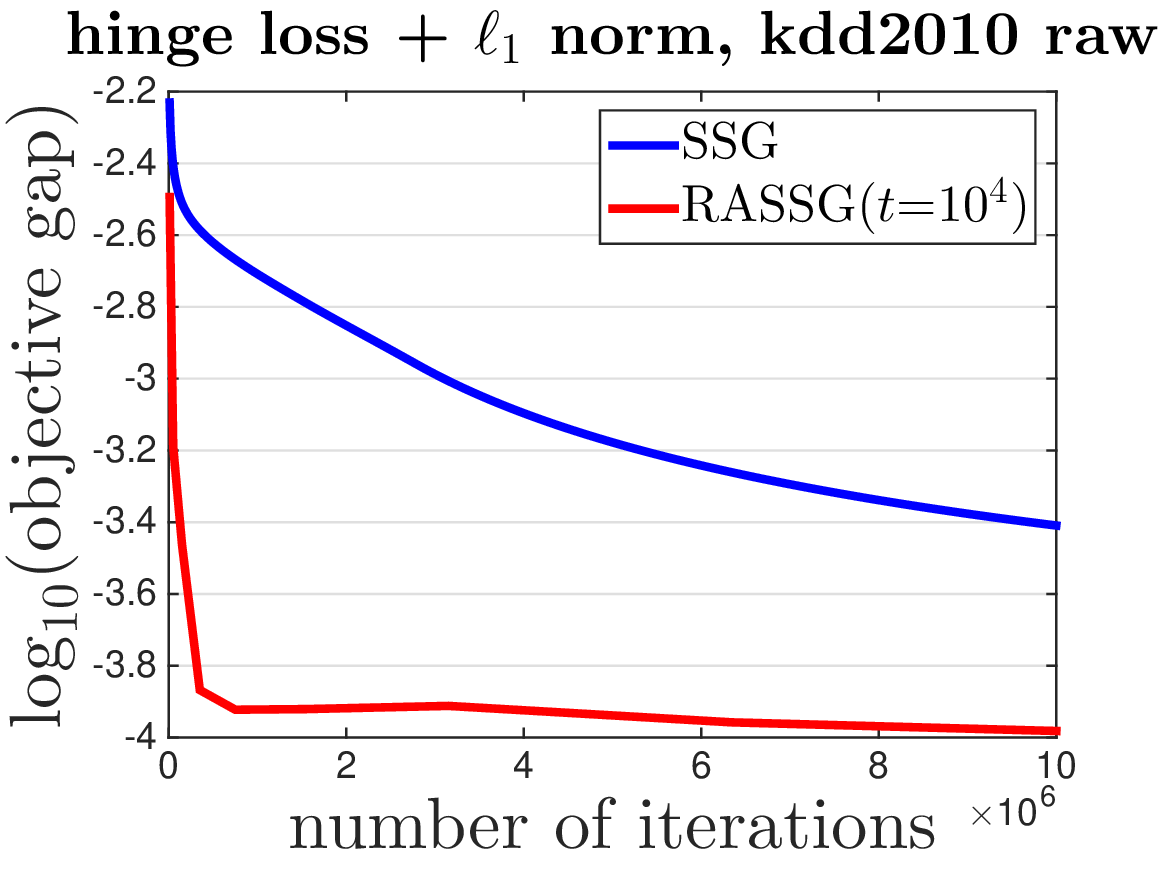}
		\includegraphics[scale=.25]{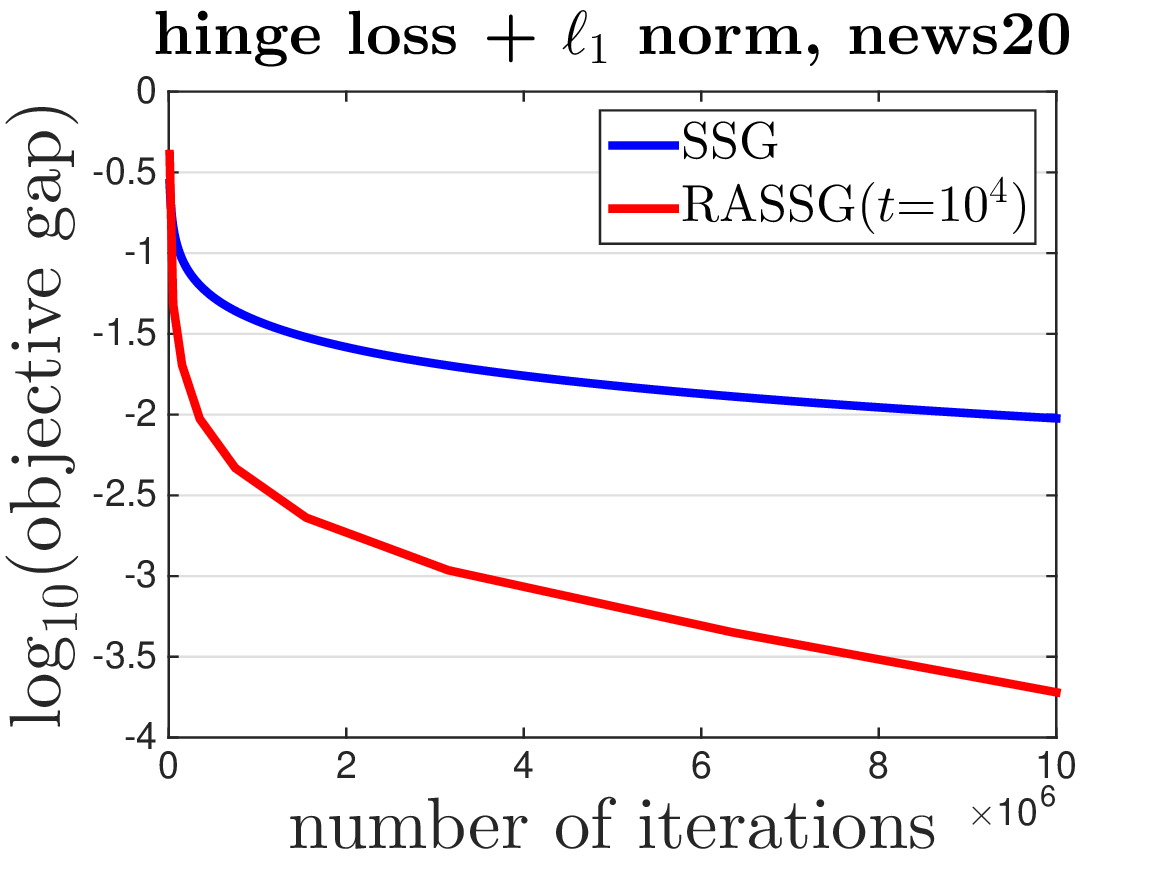}
		\includegraphics[scale=.25]{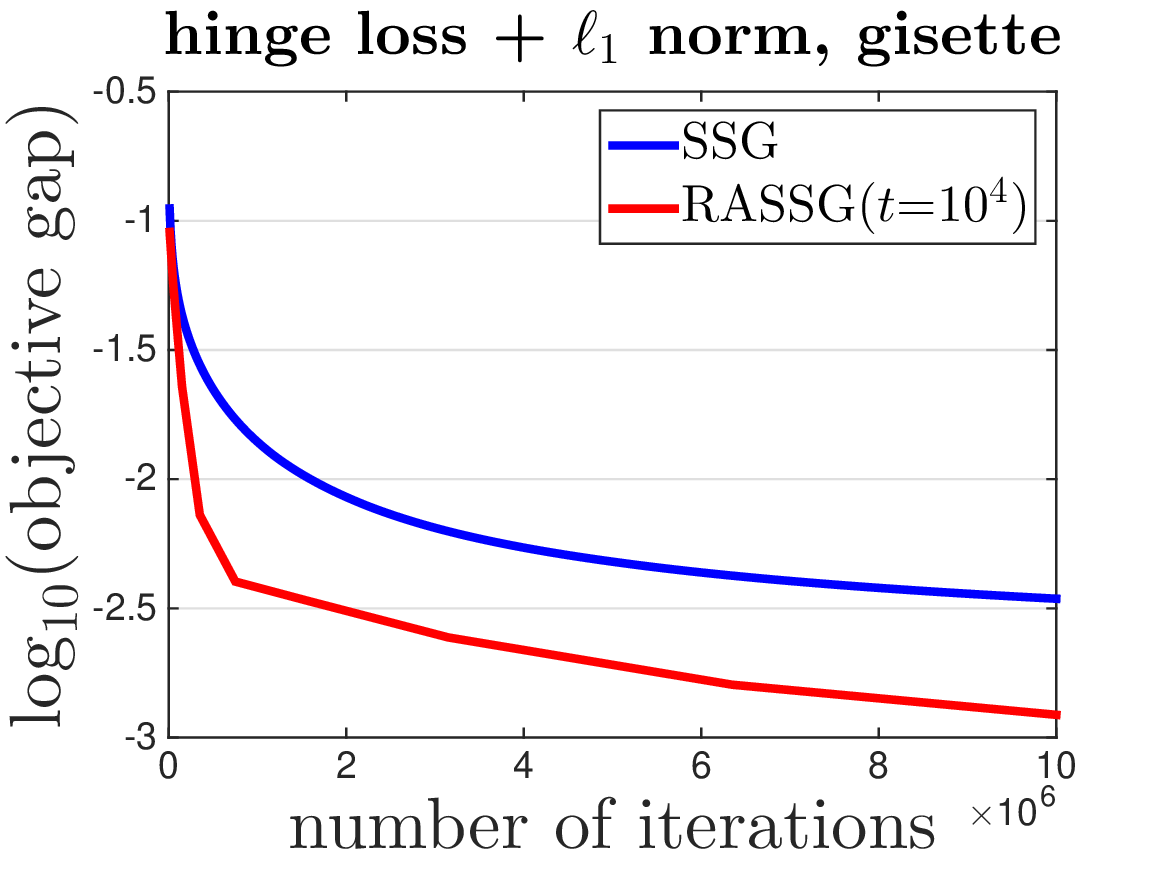}
}
\vspace*{8pt}
\caption{Comparison of SSG and RASSG-s on different datasets ($\lambda=10^{-4}$).}
\label{fig04}
\vspace*{8pt}\end{figure}

\section{Experiments}\label{sec:exp}
In this section, we perform some experiments to demonstrate effectiveness of proposed algorithms. For the first two experimens, we use very large-scale datasets from libsvm website in experiments, including covtype.binary, real-sim, url for classification, million songs, E2006-tfidf, E2006-log1p for regression. While for the last experimenst, we only consider classification problem and use nine datasets from libsvm website including covtype.binary, real-sim, avazu, gisette, kdd 2010 raw, news20.binary, rcv1.binary, url and webspam. The detailed statistics of these datasets are shown in Table~\ref{table1}.

\paragraph{Effectiveness of ASSG-c and RASSG-c for non-smooth problems.} We first compare ASSG with SSG on three tasks: $\ell_1$ norm regularized hinge loss minimization for linear classification, $\ell_1$ norm regularized Huber loss minimization for linear regression, and $\ell_1$ norm regularized $p$-norm robust regression with a loss function $\ell(\w^{\top}\x_i, y_i)=|\w^{\top}\x_i - y_i|^p$. The regularization parameter $\lambda$ is set to be $10^{-4}$/$10^{-2}$ in all tasks. We set $\gamma=1$ in Huber loss and $p=1.5$ in robust regression.  
In all experiments, we use the constrained variant of ASSG, i.e., ASSG-c. 
For fairness, we use the same initial solution with all zero entries for all algorithms. We use a decreasing step size proportional to $1/\sqrt{\tau}$ ($\tau$ is the iteration index) in SSG. The initial step size of SSG is tuned in a wide range to obtain the fastest convergence. The step size of ASSG in the first stage is also tuned around the best initial step size of SSG.  The value of $D_1$ in both ASSG and RASSG is set to $100$ for all problems. In implementing the RASSG, we restart every 5 stages with $t$ increased by a factor of $1.15$, $2$ and $2$ respectively for hinge loss, Huber loss and robust regression. We tune the parameter $\omega$ among $\{0.3, 0.6, 0.9, 1\}$.  We report the results of ASSG with a fixed number of iterations per-stage $t$ and RASSG with an increasing sequence of $t$.  
The results are plotted in Figure~\ref{fig01} and Figure~\ref{fig02} 
in which we plot the log difference between the objective value and the smallest obtained objective value (to which we refer as objective gap) versus number of iterations.
The figures show that (i) ASSG can quickly converge to a certain level set determined implicitly by $t$; (ii) RASSG converges much faster than SSG to more accurate solutions; (iii) RASSG can gradually decrease the objective value.

\paragraph{Effectiveness of ASSG-c and RASSG-c for smooth problems.}  Second, we compare  RASSG with state-of-art stochastic optimization algorithms for solving a finite-sum problem with a smooth piecewise quadratic loss (e.g., squared hinge loss, huber loss) and an $\ell_1$ norm regularization. In particular, we compare with two variance-reduction algorithms that leverage the smoothness of the function, namely SAGA~\citep{DBLP:conf/nips/DefazioBL14} and SVRG++~\citep{allen2016improved}. We conduct experiments on two  high-dimensional  datasets url and E2006-log1p and fix the regularization parameter $\lambda = 10^{-4}$ or $\lambda= 10^{-2}$. We use $\delta=1$ in Huber loss. For RASSG, we start from $D_1=100$ and $t_1 = 10^3$, then restart it every $5$ stages with $t$ increased by a factor of $2$. We tune the initial step sizes for all algorithms in a wide range and set the values of parameters in SVRG++ followed by~\citep{allen2016improved}. We plot the objective versus the CPU time (second) in Figure~\ref{fig03}. The results show that RASSG converges faster than other three algorithms for the two tasks. This is not surprising considering that RASSG, SAGA and SVRG++  suffer from an iteration complexity of  $\widetilde O(1/\epsilon)$, $O(n/\epsilon)$, and $O(n\log(1/\epsilon)+1/\epsilon)$, respectively.

\paragraph{Effectiveness of RASSG-s.}  Finally, we compare RASSG-s with SSG on $\ell_1$ norm regularized hinge loss minimization for linear classification. The regularization parameter $\lambda$ is set to be $10^{-4}$, and the initial iteration number of RASSG-s is set to be $10,000$. We fixed the total number of iterations as $1,000,000$ both for SSG and RASSG-s. Although the parameter $\theta=1$ in the considered task, we can always reduce it to $\theta = \frac{1}{2}$~\citep{xu2017AdaSVRG}. Thus we set GGC parameter $\theta=\frac{1}{2}$ in this experiment. The other parameters of SSG and RASSG-s are set as same as the first experiment. The results are presented in Figure~\ref{fig04}, showing that RASSG-s converges much faster than SSG to more accurate solutions.

\section{Conclusion}
In this paper, we have proposed accelerated stochastic subgradient methods for solving general non-strongly convex stochastic optimization under the functional local growth condition. The proposed methods enjoy a lower iteration complexity than vanilla stochastic subgradient method and also a logarithmic dependence on the impact of the initial solution. We have also made an extension by developing a more practical variant. Applications in machine learning have demonstrated the faster convergence of the proposed methods. 


\appendix
\section{Proof of Corollary~\ref{cor:ASSGc}}\label{app:cor:assgc}
\begin{proof}
First, we show that for any $1\leq k \leq K$,
\begin{align}\label{ineq1:cor:assgc}
\|\w_k - \w_0\|_2 \leq 2D_1 
\end{align}
When $k=1$, it is easy to show that $\|\w_1 - \w_0\|_2  \leq D_1$, which satisfies inequality (\ref{ineq1:cor:assgc}). When $k\geq 2$, we have 
\begin{align*}
&\|\w_k - \w_0\|_2 \\
  \leq &\|\w_k - \w_{k-1}\|_2 + \|\w_{k-1} - \w_{k-2}\|_2 + \dots + \|\w_2 - \w_1\|_2 + \|\w_1 - \w_0\|_2 \\
  \leq &D_1/2^{k-1} + D_1/2^{k-2} + \dots + D_1/2 + D_1 \leq 2D_1
\end{align*}
where the second inequality is based on the updates of Algorithm~\ref{alg:rssg}.
With probability 1, we have
\begin{align} \label{ineq2:cor:assgc}
\nonumber F(\w_k) -  F_*  =& F(\w_k) - F(\w_0) + F(\w_0) - F_* \\
\leq& \|\partial F(\w_k)\|_2 \|\w_k-\w_0\|_2 + F(\w_0) - F_*  \leq 2GD_1 + \epsilon_0 
\end{align}
where the last inequality using the fact that $\|\partial F(\w_k)\|_2\leq G$, inequality (\ref{ineq1:cor:assgc}) and Assumption~\ref{ass:1} (a).
Based on Theorem~\ref{thm:RSSG}, ASSG-c guarantees that 
\begin{align}\label{ineq3:cor:assgc}
Prob(F(\w_K) - F_*) \leq \epsilon) \geq 1 - \delta
\end{align}
Then
\begin{align*}
\nonumber \mathbb{E} \left[F(\w_K) - F_* \right]   =& \mathbb{E} \left[F(\w_K) - F_*  | F(\w_K) - F_* \leq \epsilon \right] Prob(F(\w_K) - F_* ) \leq \epsilon) \\
\nonumber   &+  \mathbb{E} \left[F(\w_K) - F_*  | F(\w_K) - F_* \geq \epsilon \right] Prob(F(\w_K) - F_* ) \geq \epsilon) \\
 \leq & \epsilon  +   (2GD_1 + \epsilon_0 ) \delta \leq 2\epsilon
\end{align*}
where the first inequality uses inequalities (\ref{ineq2:cor:assgc}) and (\ref{ineq3:cor:assgc}), and the second inequalty is due to $\delta \leq\frac{\epsilon}{2GD_1 + \epsilon_0 }$.
Therefore, ASSG-c achieves that $\mathbb E \left[F(\w_K) - F_*\right] \leq 2 \epsilon$ using 
 at most $ O \left(\lceil \log_2(\frac{\epsilon_0}{\epsilon})\rceil \log\left(\frac{2GD_1+\epsilon_0}{\epsilon}\right)c^2G^2/\epsilon^{2(1-\theta)}\right)$ iterations provided $D_1=O(\frac{c\epsilon_0}{\epsilon^{(1-\theta)}})$.
\end{proof}

\section{Proof of Lemma~\ref{lem:b}}\label{app:lemb}
\begin{proof}
By the optimality of $\wh_*$, we have for any $\w\in\mathcal K$
\begin{align*}
\left(\partial F(\wh_*) + \frac{1}{\beta} (\wh_* - \w_1)\right)^{\top}(\w - \wh_*)\geq 0.
\end{align*}
Let $\w = \w_1$, we have
\begin{align*}
\partial F(\wh_*)^{\top}(\w_1 - \wh_*)\geq \frac{\|\wh_* - \w_1\|^2_2}{\beta}.
\end{align*}
Because $\|\partial F(\wh_*)\|_2\leq G$ due to $\|\partial f(\w; \xi)\|_2\leq G$, then 
\begin{align*}
\|\wh_* - \w_1\|_2\leq \beta G.
\end{align*}
Next, we bound $\|\w_t - \w_1\|_2$. According to the update of $\w_{t+1}$ we have
\begin{align*}
\|\w_{t+1} - \w_1\|_2 \leq \|\w'_{t+1} - \w_1\|_2 = \|-\eta_t\partial f(\w_t; \xi_t) + (1-\eta_t/\beta) (\w_t-\w_1)\|_2.
\end{align*}
We prove $\|\w_t- \w_1\|_2\leq 2\beta G$ by induction. First, we consider $t=1$, where $\eta_t = 2\beta$, then
\begin{align*}
\|\w_2- \w_1\|_2\leq \left\|2\beta\partial f(\w_t; \xi_t)\right\|_2\leq 2\beta G.
\end{align*}
Then we consider any $t\geq 2$, where $\eta_t/\beta\leq 1$. Then
\begin{align*}
&\|\w_{t+1}- \w_1\|_2\leq \left\|-\frac{\eta_t}{\beta} \beta\partial f(\w_t; \xi_t) + \left(1-\frac{\eta_t}{\beta}\right)(\w_t-\w_1)\right\|_2\\
\leq &\frac{\eta_t}{\beta} \beta G + \left(1-\frac{\eta_t}{\beta}\right) 2\beta G \leq 2\beta G.
\end{align*}
Therefore
\begin{align*}
\|\wh_* - \w_t\|_2\leq 3\beta G.
\end{align*}
\end{proof}

\section{Proof of Lemma~\ref{lemma:ssgs}}\label{app:SG}
In this proof, we need the following lemma. 
\begin{lemma}(Lemma 3~\citep{DBLP:conf/nips/KakadeT08})\label{lem:mart}
Suppose $X_1,\ldots, X_T$ is a martingale difference sequence with $|X_t|\leq b$. 
Let 
\begin{align*}
 \textrm{Var}_tX_t =  \textrm{Var}(X_t|X_1,\ldots, X_{t-1}).
\end{align*}
where $ \textrm{Var}$ denotes the variance. 
Let $V=\sum_{t=1}^T \textrm{Var}_tX_t$ be the sum of conditional variance of $X_t$'s. Further, let $\sigma=\sqrt{V}$. Then we have for any $\delta<1/e$ and $T\geq 3$, 
\begin{align*}
\Pr\left(\sum_{t=1}^TX_t>\max\{2\sigma, 3b\sqrt{\log(1/\delta)}\}\sqrt{\log(1/\delta)}\right)\leq 4\delta\log T.
\end{align*}
\end{lemma}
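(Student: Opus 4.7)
The plan is to derive this as a consequence of Freedman's inequality for martingales together with a standard peeling (stratification) argument over the random cumulative variance $V$. Freedman's inequality states that for a martingale difference sequence $X_1,\dots,X_T$ with $|X_t|\leq b$, and for any fixed $a,v>0$,
\[
\Pr\Bigl(\textstyle\sum_{t=1}^T X_t \geq a\ \text{ and }\ V\leq v\Bigr)\leq \exp\Bigl(-\frac{a^2}{2v+2ba/3}\Bigr).
\]
I would take this as the starting point (it follows by the usual exponential-supermartingale argument using the inequality $\E_{t-1}[e^{\lambda X_t}]\leq \exp(\lambda^2\,\mathrm{Var}_tX_t\,\phi(\lambda b))$ with $\phi$ the Bennett function). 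The complication the lemma must handle is that $V$ is itself random, so Freedman cannot be invoked with a deterministic $v$ directly.

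To address this, I would partition the possible range of $V$ geometrically. Because $\mathrm{Var}_tX_t\leq b^2$, we have the crude bound $V\leq Tb^2$, so setting $v_k=b^2 2^k$ for $k=0,1,\dots,K$ with $K=\lceil\log_2 T\rceil$ covers the range. Define the events $E_0=\{V\leq b^2\}$ and $E_k=\{v_{k-1}<V\leq v_k\}$ for $k\geq 1$. On $E_k$ with $k\geq 1$ we have $\sqrt{v_k}\leq \sqrt{2}\,\sigma$, so a Bernstein-style deviation of order $\sqrt{v_k\log(1/\delta)}$ is controlled by $\sqrt{2}\,\sigma\sqrt{\log(1/\delta)}\leq 2\sigma\sqrt{\log(1/\delta)}$. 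Applying Freedman with $v=v_k$ and the choice $a_k=\sqrt{2v_k\log(1/\delta)}+\tfrac{2b}{3}\log(1/\delta)$ makes the exponent at most $-\log(1/\delta)$ (via the elementary quadratic inequality $\exp(-a^2/(2v+2ba/3))\leq \delta$ whenever $a\geq \sqrt{2v\log(1/\delta)}+\tfrac{2b}{3}\log(1/\delta)$), yielding failure probability at most $\delta$ on each shell.

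A union bound across the at most $K+1\leq 2\log T$ shells gives overall failure probability at most $2\delta\log T$, which the statement absorbs into the looser constant $4\delta\log T$. On the shell $E_k$ with $k\geq 1$, the bound $a_k$ is dominated by $2\sigma\sqrt{\log(1/\delta)}$ plus a lower-order bounded-difference term $\tfrac{2}{3}b\log(1/\delta)$; both are absorbed into the single clean expression $\max\{2\sigma,3b\sqrt{\log(1/\delta)}\}\sqrt{\log(1/\delta)}$. The degenerate shell $E_0$ (where $\sigma$ may be much smaller than $b\sqrt{\log(1/\delta)}$) is the one that activates the second branch $3b\sqrt{\log(1/\delta)}$ of the maximum and must be handled as a separate case using only the bounded-difference part of Freedman.

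The main obstacle I expect is not probabilistic but accounting: reconciling the $\sqrt{2}$ factor from the dyadic peeling, the $2/3$ from Freedman's denominator, and the two-term additive Bernstein bound into the tidy single-$\max$ form with the stated constants $2$, $3$, and $4\log T$. A minor point to be careful with is that the condition $\delta<1/e$ guarantees $\log(1/\delta)\geq 1$, which is needed to absorb lower-order terms and ensure monotonicity in the peeling. Everything else is routine once Freedman is in hand.
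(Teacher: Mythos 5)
First, a point of reference: the paper does not prove this lemma at all --- it is quoted, with citation, from Kakade and Tewari (2008) --- so the only thing to compare against is the argument in that source, and your outline (Freedman's inequality plus a geometric peeling over the random cumulative conditional variance $V$, a union bound supplying the $\log T$ factor, and a separate low-variance case producing the $3b\sqrt{\log(1/\delta)}$ branch) is indeed the structure of that proof. Your reduction of the Freedman exponent to the sufficient condition $a\geq\sqrt{2v\log(1/\delta)}+\frac{2b}{3}\log(1/\delta)$ via the quadratic formula is also correct.

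However, the step you defer as ``accounting'' conceals a genuine gap: with the dyadic grid $v_k=b^2 2^k$ the argument cannot deliver the constant $2$ in front of $\sigma$. On the shell $v_{k-1}<V\leq v_k$ you only know $v_k<2\sigma^2$, so $\sqrt{2v_k\log(1/\delta)}$ can be arbitrarily close to $2\sigma\sqrt{\log(1/\delta)}$, leaving no slack in $\max\{2\sigma,3b\sqrt{\log(1/\delta)}\}\sqrt{\log(1/\delta)}$ to absorb the extra $\frac{2b}{3}\log(1/\delta)$ term. Concretely, with $b=1$, $\log(1/\delta)=1$ and $\sigma=1.6$ one lands in the shell $(2,4]$, the claimed bound is $2\sigma=3.2$, but your threshold is $a_k=\sqrt{8}+\frac{2}{3}\approx 3.50$, so the event $\{\sum_t X_t>\max\{2\sigma,3b\sqrt{\log(1/\delta)}\}\sqrt{\log(1/\delta)}\}\cap E_k$ is not contained in $\{\sum_t X_t>a_k,\ V\leq v_k\}$ and the per-shell Freedman bound does not apply to it. The repair is to take a geometric ratio $r$ strictly below $2$ and start the peeling at a level of order $b^2\log(1/\delta)$, handling $V\leq\frac{9}{4}b^2\log(1/\delta)$ separately with the $3b\log(1/\delta)$ branch: choosing the per-shell threshold $a=2\sqrt{u\log(1/\delta)}$ with $u$ the lower shell endpoint (so $a\leq 2\sigma\sqrt{\log(1/\delta)}$ there) and upper endpoint $v=ru$, the Freedman condition $a^2\geq 2v\log(1/\delta)+\frac{2}{3}ab\log(1/\delta)$ becomes $(4-2r)u\geq\frac{4}{3}b\sqrt{u\log(1/\delta)}$, which is vacuous exactly at $r=2$ but holds for, say, $r=4/3$ and $u\geq b^2\log(1/\delta)$. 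A finer ratio increases the shell count, so the $4\delta\log T$ budget must then be re-verified (your intermediate claim that the number of shells is at most $2\log T$ already fails at $T=3$). The strategy is right; the parameters as proposed do not close.
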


Then, let us start the proof of Lemma~\ref{lemma:ssgs}.
\begin{proof}
Let $\g_t =\partial f(\w_t; \xi_t) + (\w_t - \w_1)/\beta$ and $\partial \Fh(\w_t) = \partial F(\w_t) + (\w_t - \w_1)/\beta$. Note that $\|\g_t\|_2\leq 3G$. 
According to the standard analysis for the stochastic gradient method we have 
\begin{align*}
\g_t^{\top}(\w_t - \wh_*)\leq \frac{1}{2\eta_t}\|\w_t - \wh_*\|_2^2 - \frac{1}{2\eta_t}\|\w_{t+1} - \wh_*\|_2^2  + \frac{\eta_t}{2} \|\g_t\|_2^2.
\end{align*}
Then 
\begin{align*}
\partial \Fh(\w_t)^{\top}(\w_t - \wh_*)\leq& \frac{1}{2\eta_t}\|\w_t - \wh_*\|_2^2 - \frac{1}{2\eta_t}\|\w_{t+1} - \wh_*\|_2^2  + \frac{\eta_t}{2} \|\g_t\|_2^2 \\
&+ (\partial \Fh(\w_t) - \g_t)^{\top}(\w_t - \wh_*).
\end{align*}
By strong convexity of $\Fh$ we have
\begin{align*}
\Fh(\wh_*) - \Fh(\w_t)\geq \partial \Fh(\w_t)^{\top}(\wh_* - \w_t) + \frac{1}{2\beta}\|\wh_* - \w_t\|_2^2.
\end{align*}
Then 
\begin{align*}
\Fh(\w_t) - \Fh(\wh_*) \leq &\frac{1}{2\eta_t}\|\w_t - \wh_*\|_2^2 - \frac{1}{2\eta_t}\|\w_{t+1} - \wh_*\|_2^2  + \frac{\eta_t}{2} \|\g_t\|_2^2 \\
&+ (\partial \Fh(\w_t) - \g_t)^{\top}(\w_t - \wh_*)- \frac{1}{2\beta}\|\wh_* - \w_t\|_2^2\\
 \leq& \frac{1}{2\eta_t}\|\w_t - \wh_*\|_2^2 - \frac{1}{2\eta_t}\|\w_{t+1} - \wh_*\|_2^2  + \frac{\eta_t}{2} \|\g_t\|_2^2 \\
 &+ \underbrace{(\partial F(\w_t) - \partial f(\w_t; \xi_t))^{\top}(\w_t - \wh_*)}\limits_{\zeta_t}- \frac{1}{2\beta}\|\wh_* - \w_t\|_2^2.
\end{align*}
By summing the above inequalities across $t=1,\ldots, T$, we have
\begin{align*}
&\sum_{t=1}^T(\Fh(\w_t) - \Fh(\wh_*)) \leq \sum_{t=1}^{T-1}\frac{1}{2}\left(\frac{1}{\eta_{t+1}} - \frac{1}{\eta_{t}} - \frac{1}{2\beta}\right)\|\wh_* - \w_{t+1}\|_2^2  + \sum_{t=1}^T\zeta_t \nonumber\\
&- \frac{1}{4\beta}\sum_{t=1}^T\|\wh_* - \w_t\|_2^2  - \frac{1}{4\beta}\|\wh_* - \w_1\|_2^2 + \frac{1}{2\eta_1}\|\wh_* - \w_1\|_2^2 + \frac{9G^2}{2}\sum_{t=1}^T\eta_t\nonumber \\
 \leq& \sum_{t=1}^T\zeta_t - \frac{1}{4\beta}\sum_{t=1}^T\|\wh_* - \w_t\|_2^2 + 9\beta G^2(1+\log T).
\end{align*}
where the last inequality uses $\eta_t = \frac{2\beta}{t}$. 

Next, we bound R.H.S of the above inequality by using Lemma~\ref{lem:mart}.
To proceed the proof of Lemma~\ref{lemma:ssgs}. We let $X_t = \zeta_t$ and $D_T = \sum_{t=1}^T\|\w_t - \wh_*\|_2^2$. Then $X_1,\ldots, X_T$ is a martingale difference sequence. Let $D=3\beta G$. Note that $|\zeta_t|\leq 2GD$. By Lemma~\ref{lem:mart},  for any $\delta<1/e$ and $T\geq 3$, with a probability $1-\delta$ we have
\begin{align*}
    \sum_{t=1}^T\zeta_t\leq \max\left\{2\sqrt{\log(4\log T/\delta)}\sqrt{\sum_{t=1}^T \textrm{Var}_t\zeta_t}, 6GD\log(4\log T/\delta)\right\}.
\end{align*}
Note that 
\begin{align*}
\sum_{t=1}^T \textrm{Var}_t\zeta_t \leq \sum_{t=1}^{T} \E_t[\zeta_t^2] \leq 4G^2 \sum_{t=1}^{T} \| \w_t - \wh_*\|_2^2  = 4G^2D_T.
\end{align*}
As a result, with a probability $1-\delta$,
\begin{align*}
    \sum_{t=1}^T\zeta_t  \leq& 4G\sqrt{\log(4\log T/\delta)}\sqrt{D_T} + 6GD\log(4\log T/\delta)\\
      \leq &16\beta G^2\log(4\log T/\delta) + \frac{1}{4\beta}D_T + 6GD\log(4\log T/\delta).
\end{align*}
As a result, with a  probability $1-\delta$,
\begin{align*}
    \sum_{t=1}^T\zeta_t - \frac{1}{4\beta}\sum_{t=1}^T\|\wh_* - \w_t\|_2^2  \leq &16\beta G^2\log(4\log T/\delta) + 6GD\log(4\log T/\delta)\\
     =& 34\beta G^2\log(4\log T/\delta).
\end{align*}
Thus, with a  probability $1-\delta$
 \begin{align*}
      \Fh(\wh_T) - \Fh(\wh_*)\leq& \frac{34\beta G^2\log(4\log T/\delta)}{ T}  + \frac{9 \beta G^2(1+\log T)}{T} \\
      \leq&  \frac{34\beta G^2(1+\log T + \log (4\log T/\delta))}{ T}.
 \end{align*}
  Using the facts that $ F(\wh_T) \leq  \Fh(\wh_T)$ and $ \Fh(\wh_*)  \leq  \Fh(\w) = F(\w) + \frac{\|\w-\w_1\|_2^2}{2\beta}$, we have 
  \begin{align*}
     F(\wh_T) - F(\w) -  \frac{\|\w-\w_1\|_2^2}{2\beta} \leq \frac{34\beta G^2(1+\log T + \log (4\log T/\delta))}{ T}.
 \end{align*}
\end{proof}

\section{Proof of Theorem~\ref{thm:RPSG}}\label{proof:thm:RPSG}
\begin{proof}
Let $\w_{k,\epsilon}^\dag$ denote the closest point to $\w_k$ in the $\epsilon$ sublevel set. 
Define  $\epsilon_k \triangleq\frac{\epsilon_0}{2^k}$. First, we note that  $\beta_k \geq  \frac{2c^2\epsilon_{k-1}}{\epsilon^{2(1-\theta)}}$. We will show by induction that $F(\w_k) - F_*\leq \epsilon_k +\epsilon$ for $k=0,1,\dots$ with a high probability, which leads to our conclusion when $k=K$.
The inequality holds obviously for $k=0$. Conditioned on $F(\w_{k-1}) - F_*\leq \epsilon_{k-1} + \epsilon$, we will show that $F(\w_k) - F_*\leq \epsilon_k +\epsilon$ with a high probability.
We apply Lemma~\ref{lemma:ssgs} to the $k$-th stage of Algorithm~\ref{alg:2} conditioned on the randomness in previous stages. With a probability at least $1-\tilde\delta$ we have
\begin{equation}\label{eqn:sg2}
F(\w_k) - F(\w_{k-1,\epsilon}^\dag) \leq  \frac{\|\w_{k-1,\epsilon}^\dag - \w_{k-1}\|_2^2}{2\beta_k}+\frac{34\beta_kG^2\left(1+\log t + \log\left(\frac{4\log t}{\tilde\delta}\right)\right)}{t}. 
\end{equation}
Similar to the proof of Theorem~\ref{thm:RSSG}, by Lemma~\ref{lem:key}, we have
\begin{equation}\label{eqn:sg2:leb}
\|\w^\dagger_{k-1,\epsilon} - \w_{k-1} \|_2 \leq  \frac{c \epsilon_{k-1}}{\epsilon^{1-\theta}}.
\end{equation}
Combining (\ref{eqn:sg2}) and (\ref{eqn:sg2:leb}), we have
\begin{equation*}
F(\w_k) - F(\w_{k-1,\epsilon}^\dag) \leq  \frac{1}{2\beta_k} \left(\frac{c \epsilon_{k-1}}{\epsilon^{1-\theta}}\right)^2+\frac{34\beta_kG^2(1+\log t + \log(4\log t/\tilde\delta))}{t}. 
\end{equation*}
Using the fact that $\beta_k\geq  \frac{2c^2\epsilon_{k-1}}{\epsilon^{2(1-\theta)}}$ and  $t\geq \frac{68\beta_kG^2(1+\log t + \log(4\log t/\tilde\delta))}{\epsilon_k}  = \frac{136\beta_1G^2(1+\log t + \log(4\log t/\tilde\delta))}{\epsilon_0}$, we get
\begin{equation*}
F(\w_k) - F(\w_{k-1, \epsilon}^\dagger) \leq  \frac{\epsilon_{k-1} }{4} +\frac{\epsilon_k}{2} = \epsilon_k,
\end{equation*}
which together with the fact that $ F(\w^\dagger_{k-1,\epsilon})-F_*\leq\epsilon$ by definition of $\w^\dagger_{k-1,\epsilon}$ implies
\begin{equation*}
F(\w_k) - F_* \leq \epsilon  + \epsilon_k.
\end{equation*}
Therefore by induction, we have with a probability at least $(1-\tilde \delta)^K$,
\begin{equation*}
F(\w_K) - F_*\leq \epsilon_K + \epsilon = \frac{\epsilon_0}{2^K} +\epsilon\leq 2\epsilon,
\end{equation*}
where the last inequality is due to the value of $K=\lceil \log_2(\frac{\epsilon_0}{\epsilon})\rceil$. Since $\tilde \delta = \delta / K$, then $(1-\tilde \delta)^K \geq 1 - \delta$.
\end{proof}

\section{Proof of Lemma~\ref{lemm:wo}}\label{app:lemm:wo}
\begin{proof}
Let rewrite the update of $\w_{\tau+1}$ in $k$-th epoch as
\begin{equation*}
  \w' = \w_\tau - \eta \partial f(\w_\tau; \xi_\tau),\\
  \w_{\tau+1}  = \Pi_{\mathcal K}[ \w'].
\end{equation*} 
Then for any fixed $\w\in\mathcal K$ we have
\begin{align*}
\frac{1}{2\eta} \|\w_{\tau+1} - \w\|^2 \leq  & \frac{1}{2\eta} \|\w' - \w\|^2 = \frac{1}{2\eta} \|\w_\tau - \eta \partial f(\w_\tau; \xi_\tau) - \w\|^2 \\
= &\frac{1}{2\eta} \|\w_\tau - \w\|^2  - \langle \partial f(\w_\tau; \xi_\tau), \w_\tau - \w\rangle + \frac{\eta}{2} \| \partial f(\w_\tau; \xi_\tau) \|^2,
\end{align*} 
which implies
\begin{align*}
\langle \partial F(\w_\tau), \w_\tau - \w\rangle \leq & \frac{1}{2\eta} \|\w_\tau - \w\|^2  - \frac{1}{2\eta} \|\w_{\tau+1} - \w\|^2  + \frac{\eta}{2} \|  \partial f(\w_\tau; \xi_\tau) \|^2\\
 & - \langle \partial f(\w_\tau; \xi_\tau) - \partial F (\w_\tau), \w_\tau - \w\rangle.
\end{align*} 
By the convexity of $F(\w)$, i.e., $F(\w) - F(\w_\tau)\geq \langle \partial F(\w_\tau), \w - \w_\tau\rangle$, and Assumption~\ref{ass:1} (c), then
\begin{align*}
F(\w_\tau) - F(\w) 
\leq & \frac{1}{2\eta} \|\w_\tau - \w\|^2  - \frac{1}{2\eta} \|\w_{\tau+1} - \w\|^2  + \frac{\eta G^2}{2} \\
 &- \langle \partial f(\w_\tau; \xi_\tau) - \partial F (\w_\tau), \w_\tau - \w\rangle.
\end{align*} 
Taking expectation over $1,\dots, \tau$, we have
\begin{equation*}
\E [ F(\w_\tau) - F(\w)]  \leq  \frac{1}{2\eta} \E [\|\w_\tau - \w\|^2]  - \frac{1}{2\eta} \E [\|\w_{\tau+1} - \w\|^2]  + \frac{\eta G^2}{2},
\end{equation*} 
where uses the fact that $\E[\langle \partial f(\w_\tau; \xi_\tau) - \partial F (\w_\tau), \w_\tau - \w\rangle] = 0$.
By summing the above inequalities across $\tau=1,\ldots, t$, we have
\begin{equation*}
\sum_{\tau=1}^t(\E [ F(\w_\tau) - F(\w)] )\leq \frac{1}{2\eta} \|\w_1 - \w\|^2 + \frac{\eta G^2 t}{2}.
\end{equation*}
It implies
\begin{equation*}
\E \left[\frac{1}{t}\sum_{\tau=1}^t( F(\w_\tau) - F(\w))\right]\leq \frac{1}{2\eta t} \|\w_1 - \w\|^2 + \frac{\eta G^2}{2}.
\end{equation*}
We complete the proof by using the convexity of $F(\w)$.
\end{proof}

\section{Monotonicity of $B_\epsilon/\epsilon$}\label{app:mono}
\begin{lemma}\label{lem:key2}
$\frac{B_\epsilon}{\epsilon}$ is monotonically decreasing in $\epsilon$. 
\end{lemma}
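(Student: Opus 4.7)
The plan is to show, for arbitrary $0<\epsilon_1<\epsilon_2$, that $B_{\epsilon_1}/\epsilon_1 \geq B_{\epsilon_2}/\epsilon_2$ by producing a witness in $\mathcal L_{\epsilon_1}$ whose distance to $\mathcal K_*$ is at least $(\epsilon_1/\epsilon_2) B_{\epsilon_2}$. The witness will be obtained by sliding the $B_{\epsilon_2}$-maximizer back along the line toward its projection onto $\mathcal K_*$.

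First I would fix $\w_2 \in \mathcal L_{\epsilon_2}$ attaining $dist(\w_2,\mathcal K_*)=B_{\epsilon_2}$; this maximizer exists because $\mathcal L_{\epsilon_2}$ is compact (it is closed by continuity of $F$ and bounded by the remark following the definition of $\mathcal S_\epsilon$), and $dist(\cdot,\mathcal K_*)$ is continuous. Let $\w^*\in\mathcal K_*$ be the (unique) projection of $\w_2$ onto $\mathcal K_*$. Parametrize the segment by $\w_\alpha=(1-\alpha)\w^* + \alpha \w_2$ for $\alpha\in[0,1]$. Since $F(\w^*)=F_*$ and $F(\w_2)=F_*+\epsilon_2$, continuity of $F$ and the intermediate value theorem yield some $\alpha\in(0,1]$ with $F(\w_\alpha)=F_*+\epsilon_1$, so $\w_\alpha\in\mathcal L_{\epsilon_1}$. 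Convexity of $F$ gives
\begin{align*}
F_*+\epsilon_1 \;=\; F(\w_\alpha) \;\leq\; (1-\alpha)F(\w^*) + \alpha F(\w_2) \;=\; F_* + \alpha\epsilon_2,
\end{align*}
hence $\alpha \geq \epsilon_1/\epsilon_2$.

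Next I would show that $\w^*$ remains the projection of $\w_\alpha$ onto $\mathcal K_*$. By the first-order projection characterization applied to $\w_2$, $\langle \w_2-\w^*,\,\v-\w^*\rangle\leq 0$ for every $\v\in\mathcal K_*$. Since $\w_\alpha-\w^*=\alpha(\w_2-\w^*)$ with $\alpha\geq 0$, the same inequality holds with $\w_\alpha$ in place of $\w_2$, so $\w^*$ is the projection of $\w_\alpha$ onto $\mathcal K_*$. Therefore
\begin{align*}
dist(\w_\alpha,\mathcal K_*) \;=\; \|\w_\alpha - \w^*\|_2 \;=\; \alpha\,\|\w_2-\w^*\|_2 \;=\; \alpha B_{\epsilon_2} \;\geq\; \tfrac{\epsilon_1}{\epsilon_2}\,B_{\epsilon_2}.
\end{align*}
Finally, since $\w_\alpha\in\mathcal L_{\epsilon_1}$, the definition of $B_{\epsilon_1}$ yields $B_{\epsilon_1}\geq dist(\w_\alpha,\mathcal K_*)\geq (\epsilon_1/\epsilon_2)B_{\epsilon_2}$, which rearranges to $B_{\epsilon_1}/\epsilon_1\geq B_{\epsilon_2}/\epsilon_2$, as required.

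The only step requiring care is the invariance of the projection along the segment; apart from that the argument is a straightforward convexity/IVT combination. A minor technical point is justifying that the maximum defining $B_{\epsilon_2}$ is attained, which I would handle by noting closedness of $\mathcal L_{\epsilon_2}$ and the previously recorded fact that $\mathcal L_\epsilon$ is bounded whenever $\mathcal K_*$ is compact.
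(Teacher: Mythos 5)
Your proof is correct, and it uses the same underlying construction as the paper's --- the segment joining the distance-maximizer on the outer level set to its projection onto $\mathcal K_*$ --- but runs the argument in the opposite direction. The paper picks the point $\bar\x$ on that segment at prescribed \emph{distance} $B_{\epsilon}$ from $\mathcal K_*$ and then must argue by contradiction (invoking the maximality in the definition of $B_\epsilon$) that $F(\bar\x)-F_*\geq\epsilon$, before comparing slopes $\frac{F(\cdot)-F_*}{dist(\cdot,\mathcal K_*)}$ along the segment. You instead pick the point at prescribed \emph{function value} $F_*+\epsilon_1$, lower-bound its barycentric coordinate by $\epsilon_1/\epsilon_2$ directly from convexity, and conclude from the mere membership $\w_\alpha\in\mathcal L_{\epsilon_1}$ that $B_{\epsilon_1}\geq \alpha B_{\epsilon_2}$. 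This buys you a fully direct argument with no contradiction step and no need to "slide the point until equality holds"; it also forces you to state and verify the one fact the paper silently assumes, namely that $\w^*$ remains the projection of every point on the segment (your first-order characterization argument is the right justification, and it is what makes $dist(\w_\alpha,\mathcal K_*)=\alpha B_{\epsilon_2}$ an equality rather than merely an upper bound $\leq\alpha B_{\epsilon_2}$, which is the direction you actually need). The two proofs are of essentially equal length; yours is slightly cleaner logically, the paper's has the minor advantage of exhibiting the monotone slope inequality explicitly.
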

\begin{proof}
Consider $\epsilon'>\epsilon>0$. Let $\x_{\epsilon'}$ be any point on $\mathcal L_{\epsilon'}$ such that $dist(\x_{\epsilon'},\Omega_*)=B_{\epsilon'}$ and $\x_{\epsilon'}^*$ be the closest point to $\x_{\epsilon'}$ in $\Omega_*$ so that $\|\x_{\epsilon'}^*-\x_{\epsilon'}\|=B_{\epsilon'}$. We define a new point between $\x_{\epsilon'}$ and $\x_{\epsilon'}^*$ as
$$
\bar\x=\frac{B_{\epsilon}}{B_{\epsilon'}}\x_{\epsilon'}+\frac{B_{\epsilon'}-B_{\epsilon}}{B_{\epsilon'}}\x_{\epsilon'}^*.
$$
Since $0<B_{\epsilon}<B_{\epsilon'}$, $\bar\x$ is strictly between $\x_{\epsilon'}$ and $\x_{\epsilon'}^*$ and $dist(\bar\x,\Omega_*)=\|\x_{\epsilon'}^*-\bar\x\|=\frac{B_{\epsilon}}{B_{\epsilon'}}\|\x_{\epsilon'}^*-\x_{\epsilon'}\|=B_{\epsilon}$. 
By the convexity of $F$, we have
$$
\frac{F(\bar\x)-F_*}{dist(\bar\x,\Omega_*)}\leq\frac{F(\x_{\epsilon'})-F_*}{dist(\x_{\epsilon'},\Omega_*)}=\frac{\epsilon'}{B_{\epsilon'}}.
$$
Note that we must have $F(\bar\x)-F_*\geq\epsilon$ since, otherwise, we can move $\bar\x$ towards $\x_{\epsilon'}$ until $F(\bar\x)-F_*=\epsilon$ but $dist(\bar\x,\Omega_*)>B_{\epsilon}$, contradicting with the definition of $B_{\epsilon}$. Then, the proof is completed by applying $F(\bar\x)-F_*\geq\epsilon$ and $dist(\bar\x,\Omega_*)=B_{\epsilon}$ to the previous inequality. 
\end{proof}

\section{Proof of Lemma~\ref{thm:proxSSG}}\label{app:proxSSG}
In this proof, we need the following lemma.
\begin{lemma}[Lemma 2 of \citep{lan2012validation}]\label{lemma:lan2012}
Let $X_1, \dots, X_t$ be a martingale difference sequence, i.e. $\E[X_{\tau}|X_1,\dots,X_{\tau-1}]=0$ for all $\tau$. Suppose that for some values $\sigma_{\tau}$, for $\tau=1,\dots,t$, we have $\E\left[\exp\left(\frac{X_{\tau}^2}{\sigma_{\tau}^2}\right)|X_1,\dots,X_{\tau-1}\right]\leq \exp(1)$. Then with probability at least $1-\delta$, we have
\begin{align*}
\sum_{\tau=1}^{t}X_{\tau} \leq \sqrt{3\log(1/\delta)\sum_{\tau=1}^{t}\sigma_{\tau}^2}.
\end{align*}
\end{lemma}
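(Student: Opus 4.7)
The plan is to prove this by the classical Chernoff--exponential-moment technique for martingales: convert the square-exponential hypothesis on each increment into a conditional moment generating function (MGF) bound, chain the conditional MGFs through the tower property to bound $\E[\exp(\lambda S_t)]$ where $S_t=\sum_{\tau=1}^t X_\tau$, and then apply Markov's inequality and optimize the free parameter $\lambda$. Throughout I interpret the hypothesis in its natural conditional form $\E[\exp(X_\tau^2/\sigma_\tau^2)\mid \mathcal F_{\tau-1}]\leq e$, where $\mathcal F_\tau=\sigma(X_1,\ldots,X_\tau)$; this reduces to the displayed unconditional form in the independent case and is the standing convention in Lan's work being cited.

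First I would show that there is a numerical constant $c$ such that for every $\lambda\in\R$,
\begin{equation*}
\E\bigl[\exp(\lambda X_\tau)\,\big|\,\mathcal F_{\tau-1}\bigr]\ \leq\ \exp(c\lambda^2\sigma_\tau^2).
\end{equation*}
The cleanest route is a moment bound: expanding $\exp(X_\tau^2/\sigma_\tau^2)$ into its Taylor series and using the hypothesis yields
\begin{equation*}
\sum_{k\geq 0}\frac{1}{k!}\,\frac{\E[X_\tau^{2k}\mid\mathcal F_{\tau-1}]}{\sigma_\tau^{2k}}\ \leq\ e,
\end{equation*}
which gives $\E[X_\tau^{2k}\mid\mathcal F_{\tau-1}]\leq e\,k!\,\sigma_\tau^{2k}$ term by term. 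Expanding $\exp(\lambda X_\tau)$ and discarding odd-order terms using $\E[X_\tau\mid\mathcal F_{\tau-1}]=0$ (plus Jensen for the odd moments between consecutive even ones), one collapses the remaining series into a bound of the form $\exp(c\lambda^2\sigma_\tau^2)$; a careful accounting produces $c=3/4$, which is exactly what is needed to recover the constant $3$ in the target inequality. A shorter but looser alternative is the AM--GM inequality $\lambda X_\tau\leq \lambda^2\sigma_\tau^2/4+X_\tau^2/\sigma_\tau^2$ followed by a centering trick, but it sacrifices sharpness of the constant.

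Next I would chain these bounds. By the tower property,
\begin{equation*}
\E[\exp(\lambda S_t)] \ =\ \E\!\bigl[\exp(\lambda S_{t-1})\,\E[\exp(\lambda X_t)\mid\mathcal F_{t-1}]\bigr] \ \leq\ \exp(c\lambda^2\sigma_t^2)\,\E[\exp(\lambda S_{t-1})],
\end{equation*}
so a straightforward induction gives $\E[\exp(\lambda S_t)]\leq \exp(c\lambda^2 V_t)$ with $V_t:=\sum_{\tau=1}^t\sigma_\tau^2$. Markov's inequality then yields, for any $a>0$ and $\lambda>0$,
\begin{equation*}
\Pr(S_t\geq a)\ \leq\ e^{-\lambda a}\E[\exp(\lambda S_t)]\ \leq\ \exp\!\bigl(-\lambda a+c\lambda^2 V_t\bigr).
\end{equation*}
Minimizing in $\lambda$ at $\lambda=a/(2cV_t)$ gives $\Pr(S_t\geq a)\leq \exp(-a^2/(4cV_t))$; equating this to $\delta$ and solving gives $a=2\sqrt{c\,V_t\log(1/\delta)}$, which with $c=3/4$ is precisely $\sqrt{3 V_t\log(1/\delta)}$, matching the lemma.

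The main obstacle is Step~1, and specifically pinning down the constant $c=3/4$ rather than settling for a larger universal constant, since only the right value propagates to the claimed factor $3$ inside the square root. Steps~2 and~3 are routine once the per-increment MGF bound is in hand. A minor but worth-noting subtlety is that the stated hypothesis is unconditional, whereas the Chernoff-on-a-martingale argument requires the conditional version of the exponential moment bound; this is a standard interpretation of such hypotheses and is consistent with the intended use of the lemma in the body of the paper.
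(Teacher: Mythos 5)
First, note that the paper does not prove this lemma at all: it is imported verbatim from the cited reference (Lan et al.) and used as a black box inside the proof of Lemma~\ref{thm:proxSSG}, so there is no in-paper argument to compare against. Your overall architecture --- read the exponential-square hypothesis conditionally, derive a per-increment bound $\E[\exp(\lambda X_\tau)\mid\mathcal F_{\tau-1}]\leq\exp(c\lambda^2\sigma_\tau^2)$, chain by the tower property, and finish with Markov plus optimization in $\lambda$ --- is exactly the standard route by which this lemma is proved in the source, and your closing computation correctly shows that $c=3/4$ is precisely what reproduces the factor $3$ in the statement. Your remark that the hypothesis must be interpreted conditionally is also a fair and necessary observation.

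The genuine gap is in your Step~1, which you yourself identify as the main obstacle but then resolve by assertion. The ``cleanest route'' you sketch --- extracting $\E[X_\tau^{2k}\mid\mathcal F_{\tau-1}]\leq e\,k!\,\sigma_\tau^{2k}$ term by term from the Taylor series of $\E[\exp(X_\tau^2/\sigma_\tau^2)]$ and resumming --- provably cannot yield $c=3/4$. Since the $k=0$ term of that series already contributes $1$, the term-by-term bound is $\E[X_\tau^{2k}\mid\mathcal F_{\tau-1}]\leq(e-1)k!\,\sigma_\tau^{2k}$, and using it for $k=1$ alone gives a quadratic coefficient of $(e-1)/2\approx0.86>3/4$ as $\lambda\to0$; these individual bounds are collectively far too loose because they cannot all be saturated simultaneously. (One can recover $\E[X_\tau^2\mid\mathcal F_{\tau-1}]\leq\sigma_\tau^2$ by applying Jensen to the whole series, but the higher even and odd moments still do not collapse to $\exp(3\lambda^2\sigma_\tau^2/4)$ by this accounting.) The constant $3/4$ in the literature comes from a different device: the pointwise inequality $e^{x}\leq x+e^{9x^2/16}$ combined with a case split on the size of $\lambda^2\sigma_\tau^2$ --- when $\tfrac{9}{16}\lambda^2\sigma_\tau^2\leq1$ one applies Jensen/H\"older to $\E[\exp(X_\tau^2/\sigma_\tau^2)]\leq e$ to get $\exp(\tfrac{9}{16}\lambda^2\sigma_\tau^2)\leq\exp(\tfrac34\lambda^2\sigma_\tau^2)$, and in the complementary regime one uses Young's inequality $\lambda x\leq\tfrac{\lambda^2\sigma_\tau^2}{4}s+\tfrac{x^2}{s\sigma_\tau^2}$ with a suitable $s$ so that the additive $+1$ from the hypothesis is absorbed into $\tfrac34\lambda^2\sigma_\tau^2$. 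Without supplying that argument (or an equivalent one), your proof establishes the inequality only with some unspecified universal constant in place of $3$, which is not the statement as used in the paper.
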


Then, let us start the proof of Lemma~\ref{thm:proxSSG}.
\begin{proof}
Based on the fact that $\frac{1}{2} \|\w- \w_{\tau} \|_2^2 + \eta \partial f(\w_{\tau}; \xi_\tau)^\top \w + \eta R(\w)$ is $\frac{1}{2}$-stongly convex in terms of $\w$, then for any $\w \in  \mathcal B(\w_1,D)$, we have 
\begin{align*}
 &\frac{1}{2} \|\w_{\tau+1}- \w_{\tau} \|_2^2 + \eta \partial f(\w_{\tau}; \xi_\tau)^\top \w_{\tau+1} + \eta R(\w_{\tau+1}) \\
 \leq& \frac{1}{2} \|\w- \w_{\tau} \|_2^2 + \eta \partial f(\w_{\tau}; \xi_\tau)^\top \w + \eta R(\w) - \frac{1}{2} \| \w -\w_{\tau+1}\|_2^2.
\end{align*}
Rewrite the inequality and then it becomes
\begin{align}\label{eqn:prop:opt}
\nonumber  & \partial f(\w_{\tau}; \xi_\tau)^\top (\w_{\tau+1}-\w) + R(\w_{\tau+1})-R(\w) \\
 \leq& \frac{1}{2\eta} \|\w- \w_{\tau} \|_2^2  - \frac{1}{2\eta} \| \w -\w_{\tau+1}\|_2^2 - \frac{1}{2\eta} \|\w_{\tau+1}- \w_{\tau} \|_2^2. 
\end{align}
Then we can lower bound the first term, that is
\begin{align}\label{eqn:conv:g}
\nonumber &\partial f(\w_{\tau}; \xi_\tau)^\top (\w_{\tau+1}-\w) \\
 \nonumber =  &\partial f(\w_{\tau}; \xi_\tau)^\top (\w_{\tau+1}-\w_\tau)  + \partial f(\w_{\tau}; \xi_\tau)^\top (\w_{\tau}-\w) \\  
\nonumber =& \partial f(\w_{\tau}; \xi_\tau)^\top (\w_{\tau+1}-\w_\tau)  + [\partial f(\w_{\tau}; \xi_\tau)-\partial f(\w_{\tau})]^\top (\w_{\tau}-\w) \\ \nonumber &+ \partial f(\w_{\tau})^\top (\w_{\tau}-\w) \\
 \geq& \partial f(\w_{\tau}; \xi_\tau)^\top (\w_{\tau+1}-\w_\tau)  + [\partial f(\w_{\tau}; \xi_\tau)-\partial f(\w_{\tau})]^\top (\w_{\tau}-\w) + f(\w_{\tau})-f(\w). 
\end{align}

The last inequality uses the convexity of $f(\w)$. Plugging inequality~(\ref{eqn:conv:g}) into~(\ref{eqn:prop:opt}), we get
\begin{align}\label{ineq1}
\nonumber &f(\w_{\tau}) - f(\w)  + R(\w_{\tau+1})-R(\w) \\
\nonumber \leq &\frac{1}{2\eta} \|\w- \w_{\tau} \|_2^2  - \frac{1}{2\eta} \| \w -\w_{\tau+1}\|_2^2 - \frac{1}{2\eta} \|\w_{\tau+1}- \w_{\tau} \|_2^2 \\
  &- \partial f(\w_{\tau}; \xi_\tau)^\top (\w_{\tau+1}-\w_\tau)  + \underbrace{ [\partial f(\w_{\tau}) - \partial f(\w_{\tau}; \xi_\tau)]^\top (\w_{\tau}-\w)}\limits_{\zeta_{\tau}(\w)}.
\end{align}
On the other hand, by the Cauchy-Shwartz Inequality, 
\begin{align}\label{ineq2}
\nonumber -\partial f(\w_{\tau}; \xi_\tau)^\top (\w_{\tau+1}-\w_\tau)   \leq& \frac{1}{2\eta} \|\w_{\tau+1}-\w_\tau\|_2^2+ \frac{1}{2}\eta\|\partial f(\w_{\tau}; \xi_\tau)\|_2^2  \\
 \leq &\frac{1}{2\eta}\|\w_{\tau+1}-\w_\tau\|_2^2 + \frac{1}{2}\eta G^2. 
\end{align}
Combining inequalities~(\ref{ineq1}) and ~(\ref{ineq2}) it will have 
 \begin{align*}
&f(\w_{\tau}) + R(\w_{\tau+1}) - f(\w)  -R(\w) \\
 \leq & \frac{1}{2\eta} \|\w- \w_{\tau} \|_2^2  - \frac{1}{2\eta} \| \w -\w_{\tau+1}\|_2^2 + \frac{1}{2}\eta G^2  + \zeta_{\tau}(\w).
\end{align*}
Taking summation over $\tau$ from $1$ to $t$ and dividing by $t$ on both sides of the inequality, then
 \begin{align*}
&\frac{1}{t}\sum_{\tau=1}^{t} F(\w_{\tau})  - F(\w)\\
 \leq &\frac{1}{t}(R(\w_1) - R(\w_{t+1}))+\frac{1}{2\eta t } \|\w- \w_1 \|_2^2  + \frac{\eta G^2}{2} + \frac{1}{t}\sum_{\tau=1}^{t}\zeta_{\tau}(\w).
\end{align*}
Since $\|\partial R(\w) \|_2 \leq \rho$ and the convexity of $F(\w)$, let $\w = \w_{1,\epsilon}^{\dagger}$, then we get
\begin{align}\label{ieq:proxASSGc:1}
\nonumber F(\wh_t)  - F(\w_{1,\epsilon}^{\dagger})   \leq &\frac{\rho \|\w_1-\w_{t+1}\|_2}{t}+\frac{ \|\w_1 -\w_{1,\epsilon}^{\dagger}\|_2^2 }{2\eta t} + \frac{\eta G^2}{2} + \frac{1}{t}\sum_{\tau=1}^{t}\zeta_{\tau}(\w_{1,\epsilon}^{\dagger}) \\
   \leq &\frac{\rho D}{t}+\frac{ \|\w_1 -\w_{1,\epsilon}^{\dagger}\|_2^2 }{2\eta t} + \frac{\eta G^2}{2} + \frac{1}{t}\sum_{\tau=1}^{t}\zeta_{\tau}(\w_{1,\epsilon}^{\dagger}).
\end{align}

Next, we will use the Lemma~\ref{lemma:lan2012} of martingale inequality to upper bound $\sum_{\tau=1}^{t}\zeta_{\tau}(\w_{1,\epsilon}^{\dagger})$ with a high probability.
By using the Jensen's inequality, we have $\|\partial f(\w_{\tau}) \|_2 = \|\E [\partial f(\w_{\tau}; \xi_\tau)] \|_2 \leq \E[\|\partial f(\w_{\tau}; \xi_\tau)\|_2] \leq G$. Let's denote $X_{\tau} = \zeta_{\tau}(\w_{1,\epsilon}^{\dagger}) = [\partial f(\w_{\tau}) - \partial f(\w_{\tau}; \xi_\tau)]^\top (\w_{\tau}-\w_{1,\epsilon}^{\dagger})$,  then $\E\left[ X_\tau \right] = 0$ and
\begin{align*}
|X_\tau|  \leq &\|\partial f(\w_{\tau}) - \partial f(\w_{\tau}; \xi_\tau)\|_2  \|\w_{\tau}-\w_{1,\epsilon}^{\dagger}\|_2 \\
 \leq &\left( \|\partial f(\w_{\tau})\|_2 +  \|\partial f(\w_{\tau}; \xi_\tau\|_2 \right)  \left( \|\w_{\tau}-\w_1\|_2 +\|\w_1-\w_{1,\epsilon}^{\dagger}\|_2 \right) \leq 4GD,
\end{align*}
where we use the fact that $\w_\tau \in \mathcal B(\w_{1}, D)$ and $\|\w_1 - \w_{1,\epsilon}\|_2\leq D$.
This implies that
\begin{align*}
\E\left[ \exp\left( \frac{X_\tau^2}{16G^2D^2}\right)\right] \leq \exp(1).
\end{align*}
Then with probability at least $1-\delta$, we have
\begin{align}\label{ieq:proxASSGc:mart}
\sum_{\tau=1}^{t}X_{\tau} \leq \sqrt{3\log(1/\delta)\sum_{\tau=1}^{t}16G^2D^2} = 4GD\sqrt{3\log(1/\delta)t}. 
\end{align}
We complete the proof by combining (\ref{ieq:proxASSGc:1}) and (\ref{ieq:proxASSGc:mart}).
\end{proof}

\section{Proof of Theorem~\ref{thm:proxASSGc}}\label{app:proxASSGc}
\begin{proof}
This proof is similar to that of Theorem~\ref{thm:RSSG}. Let $\w_{k,\epsilon}^\dag$ denote the closest point to $\w_k$ in $\mathcal S_\epsilon$. Define $\epsilon_k = \frac{\epsilon_0}{2^k}$. Note that $D_k = \frac{D_1}{2^{k-1}} \geq\frac{c\epsilon_{k-1}}{\epsilon^{1-\theta}}$ and $\eta_k = \frac{\epsilon_{k-1}}{4G^2}$. We will show by induction that $F(\w_k) - F_*\leq \epsilon_k +\epsilon$ for $k=0,1,\dots$ with a high probability, which leads to our conclusion when $k=K$. The inequality holds obviously for $k=0$. Conditioned on $F(\w_{k-1}) - F_*\leq \epsilon_{k-1} + \epsilon$, we will show that $F(\w_k) - F_*\leq \epsilon_k +\epsilon$ with a high probability. By Lemma~\ref{lem:key}, we have
\begin{align} \label{eqn:Proxassgc:leb}
 \|\w^\dagger_{k-1,\epsilon} - \w_{k-1} \|_2 \leq \frac{c \epsilon_{k-1}}{\epsilon^{1-\theta}}\leq D_k.
\end{align}
We apply Lemma~\ref{thm:proxSSG} to the $k$-th stage of Algorithm~\ref{alg:rssg} conditioned on randomness in previous stages. With a probability $1-\tilde\delta$  we have
\begin{align}\label{eqn:Proxassgc}
F(\w_k)  - F(\w_{k-1,\epsilon}^\dagger) \leq  \frac{\rho D_k}{t} + \frac{\eta_k G^2}{2}+ \frac{\|\w_{k-1} - \w_{k-1,\epsilon}^\dagger\|_2^2}{2\eta_k t}   + \frac{4GD_k\sqrt{3\log(1/\tilde\delta)}}{\sqrt{t}}.
\end{align}
We now consider two cases for $\w_{k-1}$. First, we assume $F(\w_{k-1}) - F_* \leq \epsilon$, i.e. $\w_{k-1}\in\mathcal S_{\epsilon}$. Then we have $\w_{k-1,\epsilon}^\dagger=\w_{k-1}$ and
\begin{align*}
F(\w_k)  - F(\w_{k-1,\epsilon}^\dagger) \leq  \frac{\rho D_k}{t} + \frac{\eta_k G^2}{2} + \frac{4GD_k\sqrt{3\log(1/\tilde\delta)}}{\sqrt{t}} \leq  \frac{\epsilon_{k}}{4} +  \frac{\epsilon_{k}}{4} +  \frac{\epsilon_{k-1}}{8} = \frac{3\epsilon_{k}}{4}.
\end{align*}
The second inequality using the fact that $\eta_k = \frac{\epsilon_k}{2G^2}$, $t \geq 3072\log(1/\tilde\delta) \frac{G^2D_1^2}{\epsilon_0^2}$ and $ t \geq  \frac{8\rho D_1}{\epsilon_0} $. 
As a result,
\begin{align*}
F(\w_k)  - F_* \leq F(\w_{k-1,\epsilon}^\dagger)  -F_* + \frac{3\epsilon_{k}}{4} \leq \epsilon + \epsilon_k. 
\end{align*}
Next, we consider $F(\w_{k-1}) - F_* > \epsilon$, i.e. $\w_{k-1}\notin\mathcal S_{\epsilon}$. Then we have $F(\w_{k-1,\epsilon}^\dagger)-F_* = \epsilon$. 
Combining (\ref{eqn:Proxassgc:leb}) and (\ref{eqn:Proxassgc}), we get
\begin{align*}
F(\w_k)  - F(\w_{k-1,\epsilon}^\dagger) \leq  \frac{\rho D_k}{t} + \frac{\eta_k G^2}{2}+ \frac{D_k^2}{2\eta_k t}   + \frac{4GD_k\sqrt{3\log(1/\tilde\delta)}}{\sqrt{t}}.
\end{align*}
Since $\eta_k = \frac{\epsilon_k}{2G^2}$ and $t \geq \max\left\{ \max\{16, 3072\log(1/\tilde\delta)\}\frac{G^2D_1^2}{\epsilon_0^2}, \frac{8\rho D_1}{\epsilon_0} \right\} $, we have
\begin{align*}
&  \frac{\rho D_k}{t} \leq \frac{\rho D_k\epsilon_0}{8\rho D_1}=\frac{\epsilon_k}{4},\\
& \frac{\eta_kG^2}{2}=\frac{\epsilon_k}{4},\\
& \frac{D_k^2}{2\eta_k t}  \leq \frac{(D_1/2^{k-1})^2}{2\epsilon_{k}/(2G^2)} \frac{\epsilon_0^2}{16G^2D_1^2} =  \frac{\epsilon_k}{4},\\
  &\frac{4GD_k\sqrt{3\log(1/\tilde\delta)}}{\sqrt{t}} \leq  \frac{4G(D_1/2^{k-1})\sqrt{3\log(1/\tilde\delta)} \epsilon_0}{GD_1\sqrt{3072\log(1/\tilde\delta)}} = \frac{\epsilon_k}{4}.
\end{align*} 
As a result,
\begin{align*}
    F(\w_k)- F(\w_{k-1,\epsilon}^\dagger)\leq \epsilon_k\Rightarrow F(\w_k) - F_*\leq \epsilon_k + \epsilon.
\end{align*}
with a probability $1-\tilde\delta$.
Therefore by induction,  with a probability at least $(1-\tilde\delta)^K$ we have, 
\begin{align*}
F(\w_K) - F_*\leq \epsilon_K + \epsilon \leq 2\epsilon. 
\end{align*}
Since $\tilde \delta = \delta / K$, then $(1-\tilde \delta)^K \geq 1 - \delta$ and we complete the proof. 
\end{proof}

\section{Proximal ASSG based on the regularized variant}\label{app:proxASSGR}
In this section, we will present a proximal ASSG based on the  regularized variant, which is referred to ProxASSG-r. Similar to ASSG-r, we construct a new problem by adding a strongly convex term $\frac{1}{2\beta}\|\w-\w_1\|_2^2$ to the original problem~(\ref{eqn:composite}): 
\begin{align}\label{eqn:composite:reg}
\min_{\w\in\R^d}\widehat F(\w) = F(\w) + \frac{1}{2\beta}\|\w-\w_1\|_2^2,
\end{align}
where $F(\w)$ is defined in (\ref{eqn:composite}). We denote $\widehat \w_*$ the optimal solution to problem (\ref{eqn:composite:reg}) given the regularization reference point $\w_1$. We first extend SSGS to its proximal version as presented in Algorithm~\ref{alg:ssgs-prox}. To give the convergence analysis of ProxASSG-r for solving~(\ref{eqn:composite}), we first present a lemma below to bound $\|\wh_*-\w_t\|_2$  and $\|\w_t - \w_1\|_2$. 
\begin{lemma}\label{lem:b:prox}
For any $t\geq 1$, we have $\|\wh_* - \w_t\|_2\leq 3\beta(G+\rho)$ and $\|\w_t - \w_1\|_2\leq 2\beta(G+\rho)$.
\end{lemma}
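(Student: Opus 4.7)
The plan is to mirror the proof of Lemma~\ref{lem:b} almost verbatim, with the only new ingredient being the bookkeeping of the extra subgradient $\partial R$ that enters both the optimality condition for $\wh_*$ and the update rule of ProxSSGS. Since $\widehat F(\w) = f(\w) + R(\w) + \tfrac{1}{2\beta}\|\w-\w_1\|_2^2$ and both $\|\partial f(\w;\xi)\|_2\leq G$ and $\|\partial R(\w)\|_2\leq \rho$, Jensen's inequality yields $\|\partial F(\w)\|_2\leq G+\rho$, so the roles of $G$ in Lemma~\ref{lem:b} are all replaced by $G+\rho$.

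First I would bound $\|\wh_*-\w_1\|_2$. By first-order optimality of $\wh_*$ for~(\ref{eqn:composite:reg}), there exists a subgradient $\g_*\in\partial F(\wh_*)$ with $\g_*+\tfrac{1}{\beta}(\wh_*-\w_1)=0$ (the problem is now unconstrained on $\R^d$, so no normal-cone term is needed). Taking inner product with $\w_1-\wh_*$ gives $\g_*^\top(\w_1-\wh_*) = \tfrac{1}{\beta}\|\wh_*-\w_1\|_2^2$, and Cauchy--Schwarz with $\|\g_*\|_2\leq G+\rho$ yields $\|\wh_*-\w_1\|_2\leq \beta(G+\rho)$.

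Next I would bound $\|\w_t-\w_1\|_2$ by induction. The proximal update $\w_{t+1}=\text{Prox}^{\eta_t,R}[\w_t-\eta_t\g_t]$, where $\g_t=\partial f(\w_t;\xi_t)+\tfrac{1}{\beta}(\w_t-\w_1)$ and $\eta_t=\tfrac{2\beta}{t}$, is equivalent by its first-order optimality condition to
\begin{equation*}
\w_{t+1}=\w_t-\eta_t\g_t-\eta_t\,\r_{t+1},\qquad \r_{t+1}\in\partial R(\w_{t+1}),\ \|\r_{t+1}\|_2\leq\rho.
\end{equation*}
Subtracting $\w_1$ and grouping the $(\w_t-\w_1)$ terms gives
\begin{equation*}
\w_{t+1}-\w_1=\bigl(1-\tfrac{\eta_t}{\beta}\bigr)(\w_t-\w_1)-\eta_t\bigl(\partial f(\w_t;\xi_t)+\r_{t+1}\bigr),
\end{equation*}
so the triangle inequality produces the recursion $\|\w_{t+1}-\w_1\|_2\leq |1-\eta_t/\beta|\,\|\w_t-\w_1\|_2+\eta_t(G+\rho)$. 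For the base case $t=1$, $\eta_1=2\beta$ gives $\|\w_2-\w_1\|_2\leq 2\beta(G+\rho)$ directly since $\w_1-\w_1=0$. For $t\geq 2$, $\eta_t/\beta\leq 1$, and assuming $\|\w_t-\w_1\|_2\leq 2\beta(G+\rho)$ the recursion gives $\|\w_{t+1}-\w_1\|_2\leq (1-\eta_t/\beta)\cdot 2\beta(G+\rho)+\eta_t(G+\rho)=2\beta(G+\rho)$, closing the induction.

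Finally the triangle inequality $\|\wh_*-\w_t\|_2\leq\|\wh_*-\w_1\|_2+\|\w_1-\w_t\|_2\leq\beta(G+\rho)+2\beta(G+\rho)=3\beta(G+\rho)$ yields both claims. The only mildly subtle step is the passage from the prox definition to the explicit identity $\w_{t+1}=\w_t-\eta_t\g_t-\eta_t\r_{t+1}$ with a subgradient $\r_{t+1}$ of $R$ at the \emph{new} iterate; once this is in hand the argument is structurally identical to Lemma~\ref{lem:b}, and the $\rho$ term enters additively exactly because the extra $\eta_t\r_{t+1}$ drift has norm at most $\eta_t\rho$.
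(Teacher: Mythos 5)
Your proposal is correct and follows essentially the same route as the paper's proof: the variational optimality condition for $\wh_*$ giving $\|\wh_*-\w_1\|_2\leq\beta(G+\rho)$, the identification of a subgradient $\r_{t+1}\in\partial R(\w_{t+1})$ at the new iterate from the prox optimality condition, the same convex-combination induction on $\|\w_t-\w_1\|_2$, and the final triangle inequality. No gaps.
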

\begin{proof}
By the optimality of $\wh_*$, we have for any $\w\in\R^d$
\begin{align*}
(\partial F(\wh_*) + (\wh_* - \w_1)/\beta)^{\top}(\w - \wh_*)\geq 0.
\end{align*}
Let $\w = \w_1$, we have
\begin{align*}
\partial F(\wh_*)^{\top}(\w_1 - \wh_*)\geq\frac{\|\wh_* - \w_1\|^2_2}{\beta}.
\end{align*}
We have $\|\partial F(\wh_*)\|_2\leq G+\rho$ due to $\|\partial g(\w; \xi)\|_2\leq G$ and $\|\partial R(\w)\|_2\leq \rho$, then 
\begin{align*}
\|\wh_* - \w_1\|_2\leq \beta(G+\rho).
\end{align*}
Next, we bound $\|\w_t - \w_1\|_2$. According to the update of $\w_{t+1}$, there exists a subgradient $\partial R(\w_{t+1})$ such that 
\begin{align*}
\w_{t+1} - \left(\w_t - \eta_t[\partial f(\w_t; \xi_t) + \frac{1}{\beta}(\w_t - \w_1)]\right) + \eta_t\partial R(\w_{t+1})=0,
\end{align*}
where $\eta_t = \frac{2\beta}{t}$.  Thus,
\begin{align*}
\|\w_{t+1} - \w_1\|_2 = \|-\eta_t(\partial f(\w_t; \xi_t) + \partial R(\w_{t+1})) + (1-\eta_t/\beta) (\w_t-\w_1)\|_2.
\end{align*}
We prove $\|\w_t- \w_1\|_2 \leq 2\beta(G+\rho)$ by induction. First, we consider $t=1$, where $\eta_t = 2\beta$, then
\begin{align*}
\|\w_2- \w_1\|_2= \left\| 2\beta(\partial f(\w_t; \xi_t) + \partial R(\w_{t+1}))\right\|_2 \leq 2\beta(G+\rho).
\end{align*}
Then we consider any $t\geq 2$, where $\frac{\eta_t}{\beta}\leq 1$. Then
\begin{align*}
\|\w_{t+1}- \w_1\|_2   = &\left\|-\frac{\eta_t}{\beta} \beta (\partial f(\w_t; \xi_t) + \partial R(\w_{t+1})) + (1-\frac{\eta_t}{\beta}) (\w_t-\w_1)\right\|_2\\
 \leq &\frac{\eta_t}{\beta} \beta(G+\rho) + (1-\frac{\eta_t}{\beta}) 2\beta(G+\rho)\leq 2\beta(G+\rho).
\end{align*}
Therefore
\begin{align*}
\|\wh_* - \w_t\|_2\leq 3\beta(G+\rho).
\end{align*}
\end{proof}

\begin{algorithm}[t]
\caption{Proxmal SSG for solving~(\ref{eqn:composite}) with a Strongly convex regularizer: $ \textrm{ProxSSGS}(\w_1,\beta, T)$} \label{alg:ssgs-prox}
\begin{algorithmic}[1]
\FOR{$t=1,\ldots, T$}
    \STATE Compute $\w_{t+1}=  \textrm{Prox}_{\R^d}^{2\beta/t,R} \left[ (1 - \frac{2}{t})\w_t + \frac{2}{t}\w_1 - \frac{2\beta}{t} \partial f(\w_t; \xi_t) \right]$
\ENDFOR
\STATE \textbf{Output}:  $\widehat \w_T = \sum_{t=1}^{T}\w_t / T$
\end{algorithmic}
\end{algorithm}

Next, we present a high probability convergence bound  of ProxSSGS for optimizing  $\Fh(\w)$. 
\begin{theorem}\label{thm:proxSG}
Suppose  Assumption \ref{ass:2}.c holds. Let $\wh_T$ be the returned solution of Algorithm~\ref{alg:ssgs-prox}.  Given $\w_1\in\R^d$, $\delta<1/e$ and $T\geq 3$, with a high probability $1-\delta$ we have
\begin{align*}
F(\widehat \w_T) - F(\w)  \leq \frac{1}{2\beta}\|\w - \w_1\|_2^2  +  \frac{34\beta(G+\rho)^2(1+\log T + \log(4\log T/\delta)}{T}.  
\end{align*}
\end{theorem}
\begin{proof}
Based on the update of $\w_{t+1}$ and the fact that $\frac{1}{2} \|\w- \w_{t} \|_2^2 + \eta_t [\partial f(\w_{t}; \xi_t) + (\w_t-\w_1)/\beta]^\top \w + \eta_t R(\w)$ is $\frac{1}{2}$-stongly convex in terms of $\w$, then for any $\w \in \R^d$, we have 
\begin{align*}
 &\frac{1}{2} \|\w_{t+1}- \w_{t} \|_2^2 + \eta_t [\partial f(\w_{t}; \xi_t) + (\w_t-\w_1)/\beta]^\top \w_{t+1} + \eta_t R(\w_{t+1}) \\
 \leq &\frac{1}{2} \|\w- \w_{t} \|_2^2 + \eta_t [\partial f(\w_{t}; \xi_t) + (\w_t-\w_1)/\beta]^\top \w + \eta_t R(\w) - \frac{1}{2} \| \w -\w_{t+1}\|_2^2.
\end{align*}
Rearranging the inequality gives 
\begin{align}\label{eqn:prop:opt:r}
\nonumber  & \eta_t [\partial f(\w_{t})+ (\w_t-\w_1)/\beta]^\top( \w_{t+1}-\w) + \eta_t (R(\w_{t+1}) - R(\w)) \\
\nonumber    \leq&  \frac{1}{2} \|\w- \w_{t} \|_2^2 - \frac{1}{2} \| \w -\w_{t+1}\|_2^2 - \frac{1}{2} \|\w_{t+1}- \w_{t} \|_2^2  \\
&+ \eta_t [\partial f(\w_{t}) - \partial f(\w_{t}; \xi_t) ]^\top( \w_{t+1}-\w).
\end{align}
By the strong convexity of $f(\w) + \frac{1}{2\beta}\|\w-\w_1\|_2^2$, we have
\begin{align}\label{eqn:conv:g:r}
\nonumber   &\eta_t [\partial f(\w_{t})+ (\w_t-\w_1)/\beta]^\top( \w_{t+1}-\w) \\ 
\nonumber  = & \eta_t [\partial f(\w_{t})+ (\w_t-\w_1)/\beta]^\top( \w_{t}-\w) + \eta_t [\partial f(\w_{t})+ (\w_t-\w_1)/\beta]^\top( \w_{t+1}-\w_t) \\  
\nonumber  \geq & \eta_t \left[f(\w_{t})+ \frac{1}{2\beta}\|\w_t-\w_1\|_2^2\right] - \eta_t \left[f(\w)+ \frac{1}{2\beta}\|\w-\w_1\|_2^2)\right] + \frac{\eta_t }{2\beta}\|\w_t-\w\|_2^2 \\   
  &+ \eta_t [\partial f(\w_{t})+ (\w_t-\w_1)/\beta]^\top( \w_{t+1}-\w_t). 
\end{align}
Plugging inequality~(\ref{eqn:conv:g:r}) into~(\ref{eqn:prop:opt:r}), we get
\begin{align}\label{ineq1:r}
\nonumber   & \eta_t \left[f(\w_{t})+ \frac{1}{2\beta}\|\w_t-\w_1\|_2^2\right] - \eta_t \left[f(\w)+ \frac{1}{2\beta}\|\w-\w_1\|_2^2\right]  + \eta_t (R(\w_{t+1}) - R(\w))\\
\nonumber  \leq &\frac{1}{2} \|\w- \w_{t} \|_2^2   - \frac{1}{2} \| \w -\w_{t+1}\|_2^2  + \eta_t [\partial f(\w_{t}) - \partial f(\w_{t}; \xi_t) ]^\top( \w_{t+1}-\w) \\
  &- \frac{1}{2} \|\w_{t+1}- \w_{t} \|_2^2 - \frac{\eta_t}{2\beta}\|\w_t-\w\|_2^2 - \eta_t [\partial f(\w_{t})+ (\w_t-\w_1)/\beta]^\top( \w_{t+1}-\w_t). 
\end{align}
On the other hand, by the Cauchy-Shwartz inequality we have
\begin{align}\label{ineq2:r}
\nonumber &- \eta_t [\partial f(\w_{t})+ (\w_t-\w_1)/\beta]^\top( \w_{t+1}-\w_t)\\
\nonumber \leq  & \frac{1}{4} \|\w_{t+1}-\w_t\|_2^2+  \eta_t^2 \|\partial f(\w_{t})+(\w_t-\w_1)/\beta\|_2^2 \\
\leq & \frac{1}{4} \|\w_{t+1}-\w_t\|_2^2+ 2 \eta_t^2 [G^2 + 4(G+\rho)^2]
\end{align}
and
\begin{align}\label{ineq2:r2}
\nonumber  &\eta_t [\partial f(\w_{t}) - \partial f(\w_{t}; \xi_t) ]^\top( \w_{t+1}-\w) \\
\nonumber  = &\eta_t [\partial f(\w_{t}) - \partial f(\w_{t}; \xi_t) ]^\top( \w_{t}-\w) + \eta_t [\partial f(\w_{t}) - \partial f(\w_{t}; \xi_t) ]^\top( \w_{t+1}-\w_t) \\
\nonumber  \leq& \eta_t [\partial f(\w_{t}) - \partial f(\w_{t}; \xi_t) ]^\top( \w_{t}-\w) + \frac{1}{4} \|\w_{t+1}-\w_t\|_2^2+  \eta_t^2 \|\partial f(\w_{t}) - \partial f(\w_{t}; \xi_t) \|_2^2 \\
 \leq& \eta_t [\partial f(\w_{t}) - \partial f(\w_{t}; \xi_t) ]^\top( \w_{t}-\w) + \frac{1}{4} \|\w_{t+1}-\w_t\|_2^2+  4\eta_t^2 G^2.
\end{align}
Plugging inequalities~(\ref{ineq2:r}) and ~(\ref{ineq2:r2}) into inequality ~(\ref{ineq1:r}), we get
\begin{align*}
  & \left[f(\w_{t})+R(\w_{t+1})+ \frac{1}{2\beta}\|\w_t-\w_1\|_2^2\right] - \left[f(\w)+ R(\w) + \frac{1}{2\beta}\|\w-\w_1\|_2^2\right] \\
 \leq& \frac{1}{2\eta_t} \|\w- \w_{t} \|_2^2   - \frac{1}{2\eta_t} \| \w -\w_{t+1}\|_2^2  + \underbrace{[\partial f(\w_{t}) - \partial f(\w_{t}; \xi_t) ]^\top( \w_{t}-\w)}\limits_{\zeta_t(\w)} \\
  &- \frac{1}{2\beta}\|\w-\w_t\|_2^2 + 2 \eta_t [3G^2 + 4(G+\rho)^2]. 
\end{align*}
By summing the above inequalities across $t=1,\ldots, T$ and setting $\w = \wh_*$, we have
\begin{align*}
 & \sum_{t=1}^T(\Fh(\w_t) - \Fh(\wh_*)) \nonumber\\
 \leq& \sum_{t=1}^{T-1}\frac{1}{2}\left(\frac{1}{\eta_{t+1}} - \frac{1}{\eta_{t}} - \frac{1}{2\beta}\right)\|\wh_* - \w_{t+1}\|_2^2  + \sum_{t=1}^T\zeta_t(\wh_*)  - \frac{1}{4\beta}\sum_{t=1}^T\|\wh_* - \w_t\|_2^2 \nonumber\\
 &- \frac{1}{4\beta}\|\wh_* - \w_1\|_2^2 + \frac{1}{2\eta_1}\|\wh_* - \w_1\|_2^2 + (6G^2 + 8(G+\rho)^2)\sum_{t=1}^T\eta_t + R(\w_1) - R(\w_{T+1}) \nonumber \\
 \leq &\sum_{t=1}^T\zeta_t(\wh_*) - \frac{1}{4\beta}\sum_{t=1}^T\|\wh_* - \w_t\|_2^2 +\beta[(12G^2 + 16(G+\rho)^2)(1+\log T)+2\rho(G+\rho)],
\end{align*}
where the last inequality uses $\eta_t = \frac{2\beta}{t}$ and $R(\w_1) - R(\w_{T+1}) \leq \rho\|\w_1-\w_{T+1}\|_2\leq 2\beta\rho(G+\rho)$. Next, we bound R.H.S of the above inequality. 
%
By using Lemma~\ref{lem:mart}, we employ the same technique in the proof of Theorem~\ref{thm:RPSG} to proceed our proof. The only difference is that we set $D=3\beta(G+\rho)$. We omit the detailed steps but present the key results:
with a  probability $1-\delta$, we have 
\begin{align*}
    \sum_{t=1}^T\zeta_t - \frac{1}{4\beta}\sum_{t=1}^T\|\wh_* - \w_t\|_2^2  \leq &16\beta G^2\log(4\log T/\delta)+ 6GD\log(4\log T/\delta)\\
     = &\beta(34G^2+18G\rho)\log(4\log T/\delta).
\end{align*}
Thus, with a  probability $1-\delta$,
 \begin{align*}
     \Fh(\wh_T) - \Fh(\wh_*)  \leq& \beta\frac{(34G^2+18G\rho)\log(4\log T/\delta)}{T}  \\
&+ \beta\frac{(12G^2 + 16(G+\rho)^2)(1+\log T)+2\rho(G+\rho)}{T} \\
      \leq& \frac{34\beta(G+\rho)^2 (1+\log T + \log(4\log T/\delta))}{T}.
 \end{align*}
 We complete the proof by using the facts that $F(\wh_T)  \leq \Fh(\wh_T) $ and $\Fh(\wh_*) \leq F(\w) + \frac{\lambda}{2} \|\w-\w_1\|_2^2$.
\end{proof}

\begin{algorithm}[t]
\caption{the ProxASSG-r algorithm for solving~(\ref{eqn:composite})} \label{alg:assgr-prox}
\begin{algorithmic}[1]
\STATE \textbf{Input}: the number of stages $K$ and  the  number of iterations $t$ per-stage,  the initial solution $\w_0\in\mathcal K$, and $\beta_1 \geq \frac{2c^2\epsilon_0}{\epsilon^{2(1-\theta)}}$
\FOR{$k=1,\ldots, K$}	
   	\STATE Let $\w_{k} =  \textrm{ProxSSGS}(\w_{k-1}, \beta_k, t)$     
	\STATE Update $\beta_{k+1} = \beta_k/2$ 
   \ENDFOR
\STATE \textbf{Output}:  $\w_K$
\end{algorithmic}
\end{algorithm}
Finally, we present ProxASSG-r in Algorithm~\ref{alg:assgr-prox} and its convergence guaratnee is presented in theorem below. 
\begin{theorem}\label{thm:proxASSGr}
Suppose Assumptions \ref{ass:1} and \ref{ass:2}  hold for a target $\epsilon\ll 1$. Given $\delta\in(0,1/e)$, let $\tilde\delta = \delta/K$ and $K = \lceil \log_2(\frac{\epsilon_0}{\epsilon})\rceil$ and $t$ be the smallest integer such that $t \geq  \max\{\frac{136\beta_1(G+\rho)^2(1+\log (4\log t/\tilde\delta)+\log t)}{\epsilon_0},3\}$.  Then ProxASSG-r guarantees that, with a probability $1-\delta$, 
\begin{align*}
F(\w_K) - F_* \leq 2 \epsilon.
\end{align*}
As a result, the iteration complexity of ASSG-r for achieving an $2\epsilon$-optimal solution with a high probability $1-\delta$ is $\widetilde O (\log(1/\delta)/\epsilon^{2(1-\theta)})$ provided $\beta_1=\Omega(\frac{2c^2\epsilon_0}{\epsilon^{2(1-\theta)}})$.
\end{theorem}
\begin{proof}
The proof is the  same to the proof of Theorem~\ref{thm:RPSG} by replacing $G$ by $G+\rho$.
\end{proof}

\section{ASSG for Piecewise Convex Quadratic Minimization} \label{app:PCQM}
In this section, we develop an ASSG for piecewise convex quadratic minimization under the global error bound condition: 
\begin{align}\label{eqn:cpqa}
    dist(\w,\mathcal K_*)\leq c  [F(\w) - F_* + (F(\w) - F_*)^{1/2}], \quad \forall \w\in\R^d.
\end{align}
We assume that an upper bound of $c\leq \hat c$ is given. 
Here, we only show the results for the constrained variant of ASSG, which is presented in Algorithm~\ref{alg:rssg:Huber}. The regularized variant is a simple exercise.  
\begin{algorithm}[t]
\caption{the ASSG-c algorithm  under the global error bound condition~(\ref{eqn:cpqa})} \label{alg:rssg:Huber}
\begin{algorithmic}[1]
\STATE \textbf{Input}: the  number of stages $K$, the number of iterations $t$ per stage, and the initial solution $\w_0$, $\eta_1=\epsilon_0/(3G^2)$ and $\hat c\geq c$
\FOR{$k=1,\ldots, K$}
\STATE Let $\w^k_1 = \w_{k-1}$ and $D_k\geq \hat c(\epsilon_{k-1} + \sqrt{\epsilon_{k-1}})$
\FOR{$\tau=1,\ldots, t_k$}
    \STATE Update $\w^k_{\tau+1} = \Pi_{\mathcal K\cap \mathcal B(\w_{k-1},D_k)}[\w^k_{\tau} - \eta_k \partial f(\w^k_{\tau}; \xi^k_\tau)]$
   \ENDFOR
\STATE Let $\w_k = \frac{1}{t_k}\sum_{\tau=1}^{t_k}\w^k_\tau$
\STATE Let $\eta_{k+1} = \eta_k/2$ 
\ENDFOR
\STATE \textbf{Output}:  $\w_K$
\end{algorithmic}
\end{algorithm}
\begin{theorem}\label{thm:RSSG-Huber}
Suppose Assumption \ref{ass:1}  holds and $F(\w)$ is convex and piecewise convex quadratic function. Given $\delta\in(0,1)$, let $\tilde\delta = \delta/K$,  $K = \lceil \log_2(\frac{\epsilon_0}{\epsilon})\rceil$, and $t_k$ be the smallest integer such that $t_k \geq 6912G^2\hat c^2\log(1/\tilde\delta)\max\{1, \frac{1}{\epsilon_k}\}$. Then Algorithm \ref{alg:rssg:Huber} guarantees that, with a probability $1-\delta$, 
\begin{align*}
F(\w_K) - F_* \leq \epsilon.
\end{align*}
As a result, the iteration complexity of Algorithm \ref{alg:rssg:Huber} for achieving an $\epsilon$-optimal solution with a high probability $1-\delta$ is $ O (\log(1/\delta)/\epsilon)$.
\end{theorem}
\begin{proof}
Define $\epsilon_k = \frac{\epsilon_0}{2^k}$. Note that  $\eta_k = \frac{\epsilon_{k-1}}{3G^2}$. We will show by induction that $F(\w_k) - F_*\leq \epsilon_k$ for $k=0,1,\dots$ with a high probability, which leads to our conclusion when $k=K$. The inequality holds obviously for $k=0$. Conditioned on $F(\w_{k-1}) - F_*\leq \epsilon_{k-1} $, we will show that $F(\w_k) - F_*\leq \epsilon_k $ with a high probability. First, we have 
\begin{align*} 
\nonumber \|\w_{k-1}^* - \w_{k-1} \|_2    \leq& c [F(\w_{k-1}) - F_* + (F(\w_{k-1}) - F_*)^{1/2}] \\
  \leq &c(\epsilon_{k-1}+\sqrt{\epsilon_{k-1}}) \leq D_k,
\end{align*}
where $\w^*_{k-1} \in \mathcal K_*$ is the closest point to $\w_{k-1}$ in the optimal set, 
the second inequality follows the global error bound (\ref{eqn:cpqa}) and the last inequality uses the value of $D_k$. We apply Lemma~\ref{thm:ssg} replacing $\w_{1,\epsilon}^\dagger$ with $\w_1^*$ to the $k$-th stage of Algorithm~\ref{alg:rssg} conditioned on randomness in previous stages. With a probability $1-\tilde\delta$  we have
\begin{align*}
\nonumber F(\w_k)  - F_*   \leq&  \frac{\eta_k G^2}{2}+ \frac{\|\w_{k-1} - \w_*\|_2^2}{2\eta_k t_k}   + \frac{4GD_k\sqrt{3\log(1/\tilde\delta)}}{\sqrt{t_k}} \\
 \leq & \frac{\eta_k G^2}{2}+ \frac{D_k^2}{\eta_k t_k}   + \frac{4GD_k\sqrt{3\log(1/\tilde\delta)}}{\sqrt{t_k}}.
\end{align*}
Since $\eta_k = \frac{2\epsilon_k}{3G^2}$ and $t_k \geq 6912G^2\hat c^2\log(1/\tilde\delta)\max\{1, \frac{1}{\epsilon_k}\}$, we can derive that $F(\w_k) - F_*\leq \epsilon_k$  
with a probability $1-\tilde\delta$.
Therefore by induction,  with a probability at least $(1-\tilde\delta)^K$ we have $F(\w_K) - F_*\leq \epsilon_K  \leq \epsilon$. 
Since $\tilde \delta = \delta / K$, then $(1-\tilde \delta)^K \geq 1 - \delta$ and we complete the proof. In fact, the total number of iterations of ASSG-c  is bounded by 
$T = \sum_{k=1}^{K} t_k \leq O\left( \log(1/\tilde\delta) \sum_{k=1}^{K} \frac{1}{\epsilon_k} \right) = O \left( \frac{ \log(1/\tilde\delta) }{\epsilon}\right)$.
\end{proof}

\bibliography{all}

\end{document}